\colorlet{mylinkcolor}{violet}
\colorlet{mycitecolor}{YellowOrange}
\colorlet{myurlcolor}{Aquamarine}
\newtheorem{theorem}{Theorem}
\newtheorem{conjecture}[theorem]{Conjecture}
\newtheorem{corollary}[theorem]{Corollary}
\newtheorem{observation}[theorem]{Observation}
\newtheorem{lemma}[theorem]{Lemma}
\newcounter{claim_nb}[theorem]
\newtheorem{claim}[claim_nb]{Claim}
\DeclarePairedDelimiter\ceil{\lceil}{\rceil}
\DeclarePairedDelimiter\set{\{}{\}}
\newcommand{\calC}{\mathcal{C}}
\newcommand{\calT}{\mathcal{T}}
\DeclareMathOperator\td{td}
\let\le\leqslant
\let\ge\geqslant
\let\leq\leqslant
\let\geq\geqslant
\let\subset\subseteq
\let\epsilon\varepsilon
\renewenvironment{enumerate}{\begin{enumorig}[label=\textup{(\roman*)}, noitemsep, topsep=2pt plus 2pt, labelindent=.2em, leftmargin=*, widest=iii]}{\end{enumorig}}
\let\old@setaddresses\@setaddresses
\def\@setaddresses{\bigskip\bgroup\parindent 0pt\let\scshape\relax\old@setaddresses\egroup}
\newcommand{\zB}{\mathcal B}
\newcommand{\tdtwo}{\td_2}
\newcommand{\chicir}{\chi_{\textnormal{cc}}}
\newcommand{\ignore}[1]{}
\newenvironment{cproof}
{\begin{proof}
 [Proof.]
 \vspace{-1.5\parsep}
}
{ \end{proof}}
\begin{document}
\title{Excluding a ladder}

\author[T.~Huynh]{Tony Huynh}
\address[T.~Huynh]{School of Mathematics \\
Monash University \\
Melbourne, Australia}
\email{tony.bourbaki@gmail.com}

\author[G.~Joret]{Gwena\"el  Joret}
\address[G.~Joret]{Computer Science Department \\
  Universit\'e Libre de Bruxelles\\
  Brussels, Belgium}
\email{gjoret@ulb.ac.be}

\author[P.~Micek]{Piotr Micek}
\address[P.~Micek, M.T.~Seweryn]{Theoretical Computer Science Department\\
  Faculty of Mathematics and Computer Science, Jagiellonian University \\
  Krak\'ow, Poland}
\email{piotr.micek@tcs.uj.edu.pl}

\author[M.T.~Seweryn]{Micha\l{} T.~Seweryn}
\email{michal.seweryn@tcs.uj.edu.pl}

\author[P.~Wollan]{Paul Wollan}
\address[P.~Wollan]{Department of Computer Science\\
University of Rome ``La Sapienza'' \\
Rome, Italy}
\email{wollan@di.uniroma1.it}

\thanks{T.\ Huynh is supported by the Australian Research Council.
G.\ Joret is supported by an ARC grant from the Wallonia-Brussels Federation of Belgium.
P.\ Micek was partially supported by the National Science Center of Poland under grant no.\ 2015/18/E/ST6/00299.
M.T.\ Seweryn was partially supported by Kartezjusz program WND-POWR.03.02.00-00-I001/16-01 funded by The National Center for Research and Development of Poland.}

\date{\today}

\subjclass[2010]{05C83, 06A07}

\keywords{graph minor, ladder, treedepth, poset dimension}

\begin{abstract}
A ladder is a $2 \times k$ grid graph.
When does a graph class $\mathcal{C}$ exclude some ladder as a minor?
We show that this is the case if and only if all graphs $G$ in $\mathcal{C}$ admit a proper vertex coloring with a bounded number of colors such that for every $2$-connected subgraph $H$ of $G$, there is a color that appears exactly once in $H$.
This type of vertex coloring is a relaxation of the notion of centered coloring, where for every connected subgraph $H$ of $G$, there must be a color that appears exactly once in $H$.
The minimum number of colors in a centered coloring of $G$ is the treedepth of $G$, and it is known that classes of graphs with bounded treedepth are exactly those that exclude a fixed path as a subgraph, or equivalently, as a minor.
In this sense, the structure of graphs excluding a fixed ladder as a minor resembles the structure of graphs without long paths.
Another similarity is as follows:
It is an easy observation that every connected graph with two vertex-disjoint paths of length $k$ has a path of length $k+1$.
We show that every \(3\)-connected graph which contains as a minor a union of sufficiently many vertex-disjoint copies of a \(2 \times k\) grid has a \(2 \times (k+1)\) grid minor.

Our structural results have applications to poset dimension.
We show that posets whose cover graphs exclude a fixed ladder as a minor have bounded dimension.
This is a new step towards the goal of understanding which graphs are unavoidable as minors in cover graphs of posets with large dimension.
\end{abstract}
\maketitle

\section{Introduction}
Graphs with no long paths are relatively well understood.
In particular, if a graph $G$ does not contain a path on \(k+1\) vertices as a subgraph, then
$G$ has a centered coloring with at most \(k\) colors.
Conversely, if $G$ has a centered coloring with at most \(k\) colors, then $G$ does not contain a path on \(2^k\) vertices.
Here, a \emph{centered coloring} of $G$ is a vertex coloring of $G$ such that
for every connected subgraph $H$ of $G$, some color is assigned to exactly one vertex of $H$.
The minimum number of colors used in a centered coloring of \(G\) is known as the {\em treedepth} of $G$, denoted $\td(G)$.

In this paper, we show an analogous result for graphs excluding a fixed ladder as a minor.
We show that such graphs can be characterized as graphs that admit a \(2\)-connected centered coloring with a bounded number of colors.
Here, a \emph{\(2\)-connected centered coloring} of a graph $G$ is a vertex coloring of $G$ such that for every connected subgraph $H$ of $G$ having no cutvertex, some color is assigned to exactly one vertex of $H$.\footnote{We remark that this definition is slightly different than the one given in the abstract but is equivalent. Indeed, if $H$ is connected with no cutvertex then $H$ is either a vertex, an edge, or is $2$-connected; edges will make sure that the coloring is proper.}
The minimum number of colors in a \(2\)-connected centered coloring of \(G\) is denoted \(\tdtwo(G)\).

Before stating our theorem formally, we introduce a related type of coloring.
A \emph{cycle centered coloring} of $G$ is a vertex coloring of $G$ such that for every subgraph $H$ of $G$ which is an edge or a cycle, some color is assigned to exactly one vertex of $H$.
The minimum number of colors in a cycle centered coloring of \(G\) is denoted \(\chicir(G)\).
While every \(2\)-connected centered coloring of a graph is cycle centered, the converse is not necessarily true.

Let \(L_k\) denote the ladder with $k$ rungs (that is, the $2\times k$ grid graph).
Our theorem for graphs excluding a ladder is as follows.

\begin{restatable}{theorem}{TheoremGraphsWithoutLongLadders}
  \label{thm:graphs-without-long-ladders}
  For every class \(\calC\) of graphs, the following properties are equivalent.
  \begin{enumerate}
    \item\label{itm:gwll-no-ladder} There exists an integer \(k \ge 1\) such that no graph in \(\calC\) has an \(L_k\) minor.
    \item\label{itm:gwll-2-c-coloring} There exists an integer \(m \ge 1\) such that \(\tdtwo(G) \le m\) for every graph \(G\) in \(\calC\).
    \item\label{itm:gwll-circ-coloring} There exists an integer \(c \ge 1\) such that \(\chicir(G) \le c\) for every graph \(G\) in \(\calC\).
  \end{enumerate}
\end{restatable}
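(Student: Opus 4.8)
The plan is to prove the three implications $(ii)\Rightarrow(iii)\Rightarrow(i)\Rightarrow(ii)$. The first one is immediate from the definitions: a connected graph with no cutvertex that is an edge or a cycle is in particular an edge or a cycle, so every $2$-connected centered coloring is a cycle centered coloring, giving $\chicir(G)\le\tdtwo(G)$ for all $G$; hence a uniform bound on $\tdtwo$ over $\calC$ yields one on $\chicir$.

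For $(iii)\Rightarrow(i)$ I would argue the contrapositive: if for every $k$ some graph in $\calC$ has an $L_k$ minor, then $\chicir$ is unbounded on $\calC$. Since $L_k$ has maximum degree $3$, an $L_k$ minor is an $L_k$ topological minor, so the witnessing graph contains a subdivision $L'$ of $L_k$ as a subgraph, and $\chicir$ does not grow when passing to a subgraph; so it suffices to show that $\chicir(L')$ tends to infinity with $k$, uniformly over all subdivisions $L'$ of $L_k$. For this I would use the \emph{window cycles} of the ladder: for $1\le a<b\le k$ the boundary of the sub-ladder on columns $a,\dots,b$ is a cycle $W[a,b]$, which is a cycle of $L'$ once the relevant edges are subdivided. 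Starting from $W[1,k]$, let $z$ be a uniquely colored vertex of it; then $z$ sits on one of the four sides of this cycle, and one can pick an index $j$ roughly in the middle so that $z$ lies in neither window on columns $\{1,\dots,j\}$ nor on columns $\{j{+}1,\dots,k\}$ (shifting $j$ by one according to whether $z$ is a branch vertex, a rung vertex, or on a horizontal edge). As the color of $z$ occurs exactly once on $W[1,k]$, it does not reappear in either half-window, so each half-window's uniquely colored vertex carries a genuinely new color; iterating gives $\Omega(\log k)$ colors. The one subtlety is that the subdivided windows do not literally nest as vertex sets — the interiors of the subdivided rungs ``slide'' as the interval shrinks — so the recursion has to be set up carefully, carrying the crossed rung segments along as part of the boundary data of each subproblem.

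For the main implication $(i)\Rightarrow(ii)$, assume no graph in $\calC$ has an $L_k$ minor. First, $\tdtwo(G)=\max_B\tdtwo(B)$ over the blocks $B$ of $G$: a $2$-connected subgraph lies inside a single block, and optimal colorings of the blocks can be permuted to agree on the cutvertices and glued along the block-cut tree (using also $\tdtwo(G')\le 1+\tdtwo(G'-v)$, which holds unconditionally). So it is enough to bound $\tdtwo$ for $2$-connected $L_k$-minor-free graphs, which I would do by induction on $k$. Since $L_k$ is planar, $L_k$-minor-free graphs have bounded treewidth (Grid Minor Theorem), so our $2$-connected graph has a Tutte / SPQR decomposition into three-connected torsos, cycles, and bonds, and each torso is again a minor of the whole graph, hence $L_k$-minor-free. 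A three-connected torso $H$ has, by the stated three-connected theorem, fewer than a bounded number of vertex-disjoint $L_{k-1}$ minors; since $L_{k-1}$ is planar it has the Erd\H{o}s–P\'osa property, so there is a bounded-size $Z\subseteq V(H)$ with $H-Z$ being $L_{k-1}$-minor-free, whence $\tdtwo(H)\le|Z|+\tdtwo(H-Z)$ is bounded by the induction hypothesis; cycles and bonds contribute only constant $\tdtwo$. It then remains to assemble these colorings of the torsos into a coloring of the whole graph.

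The main obstacle is exactly this last step. Because a long chain (or other large configuration) of nontrivial torsos glued along $2$-separations already contains a large ladder minor — e.g.\ a long chain of $4$-cycles glued along edges is essentially a ladder, and $K_{2,n}$ collapses after deleting one vertex — excluding $L_k$ must impose enough control on the decomposition tree that the pieces can be eliminated with a number of colors depending only on $k$. Making this precise requires understanding how ladder minors thread through the Tutte decomposition: which combinations of torsos do, and which do not, produce a larger ladder, and how to choose the vertices eliminated in the $\tdtwo$-recursion so that the recursion bottoms out at a depth bounded in terms of $k$. By contrast, the direction $(iii)\Rightarrow(i)$ is routine once the window-cycle recursion is in place (the subdivision bookkeeping above being the only real wrinkle), and $(ii)\Rightarrow(iii)$ is trivial.
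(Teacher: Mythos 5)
Your implication \ref{itm:gwll-2-c-coloring}~$\Rightarrow$~\ref{itm:gwll-circ-coloring} is fine and is exactly the paper's. The two other directions have genuine gaps. For the main implication \ref{itm:gwll-no-ladder}~$\Rightarrow$~\ref{itm:gwll-2-c-coloring}, the block reduction is fine and bounding $\tdtwo$ of the $3$-connected torsos along your lines is plausible, but the step you yourself flag --- reassembling torso colorings across the $2$-separations of the Tutte/SPQR tree --- is not a loose end, it is the entire difficulty, and you give no idea for it: as your own example shows ($L_n$ is a chain of $C_4$'s glued along edges), $\tdtwo$ of a $2$-connected graph is not controlled by the $\tdtwo$ of its torsos, so no gluing lemma exists and one must use the exclusion of $L_k$ to control the decomposition tree itself. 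Moreover you invoke the ``stated three-connected theorem'' (Theorem~\ref{thm:bumping-a-ladder}); in this paper that theorem is proved \emph{using} Theorem~\ref{thm:graphs-without-long-ladders} (its proof begins by choosing $m$ so that $L_{k+1}$-minor-free graphs have $\tdtwo\le m$), so as set up your argument is circular unless you supply an independent proof. The paper's route is entirely different and self-contained: Lemma~\ref{lem:lk-or-rooted-lt} shows by induction on $t$ that a $2$-connected graph with $\tdtwo(G)>t\bigl((k-1)^2+2\bigr)$ has an $L_k$ minor or an $L_t$-model rooted at any prescribed pair of vertices, using only Menger plus Erd\H{o}s--Szekeres in one case and a small separator plus a block of large $\tdtwo$ in the other --- no grid-minor theorem, no Erd\H{o}s--P\'osa, no SPQR decomposition.

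For \ref{itm:gwll-circ-coloring}~$\Rightarrow$~\ref{itm:gwll-no-ladder}, the window-cycle recursion is unsound at its key step. The vertex $z$ is uniquely colored only \emph{on the boundary cycle} $W[1,k]$, but each half-window contains vertices not on $W[1,k]$, namely the (subdivided) interior rungs that become its new boundary rungs $Q_j$, $Q_{j+1}$; so ``the color of $z$ does not reappear in either half-window'' does not follow, and the uniquely colored vertex found at the next level may simply repeat $\varphi(z)$, killing the count. (In the unsubdivided ladder every vertex lies on the outer cycle, but you are forced to work with subdivisions precisely because $\chicir$ is monotone under subgraphs and not under minors, and there the failure is real --- and it is not the ``sliding rungs'' bookkeeping you mention.) The paper's Lemma~\ref{lem:uniform-model} repairs exactly this: it first pigeonholes the $4^c$ rungs to extract $2^c$ rungs all carrying the same color set; then the unique vertex on the outer cycle cannot lie on a rung, its color is unique in the whole sub-ladder, and only then does the halving recursion accumulate genuinely new colors. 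Your sketch is missing this uniformization idea, and without it (or some substitute) the $\Omega(\log k)$ bound is not established.
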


A second contribution of this paper is as follows.
As is well known, every pair of longest paths in a connected graph intersect, or equivalently, if a connected graph contains two vertex disjoint paths of order \(k\), then it contains a path of order \(k+1\).
We show a generalization of this statement where paths are replaced with ladders, and `two' with `many'.

\begin{restatable}[Bumping a ladder]{theorem}{TheoremBumpingLadder}
  \label{thm:bumping-a-ladder}
  For every integer \(k \ge 1\) there exists an integer \(N \ge 1\) with the following property.
  Every \(3\)-connected graph \(G\) containing a union of \(N\) vertex-disjoint copies of \(L_k\) as a minor contains \(L_{k+1}\) as a minor.
\end{restatable}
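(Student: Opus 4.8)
The plan is to pass to topological minors and combine a local ``bumping'' surgery with a connectivity-driven search for the configuration that makes the surgery work. Since $L_k$ has maximum degree $3$, a graph contains $L_k$ as a minor exactly when it contains a subdivision of $L_k$; so the hypothesis gives $N$ pairwise vertex-disjoint subdivisions $S_1, \dots, S_N$ of $L_k$ in $G$, and it suffices to produce one subdivision of $L_{k+1}$. The case $k = 1$ is immediate (a $3$-connected graph has minimum degree at least $3$, hence a $K_4$-subdivision, hence an $L_2$), so assume $k \ge 2$; then each $|V(S_i)| \ge 4$. For a subdivision $S$ of $L_k$ write $P, Q$ for its rail paths, $p_1, \dots, p_k$ and $q_1, \dots, q_k$ for the branch vertices along them (so $p_1, q_1, p_k, q_k$ have degree $2$ in $S$), and $\rho_1, \dots, \rho_k$ for the rung paths, $\rho_i$ joining $p_i$ to $q_i$.

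The first ingredient is a \emph{bumping lemma}: $G$ contains $L_{k+1}$ whenever it contains a subdivision $S$ of $L_k$ and a path $\pi$ that is internally disjoint from $S$ and either (a) has its two endpoints in the interiors of the rail segments $p_i p_{i+1}$ and $q_i q_{i+1}$ for one common index $i$, or (b) has endpoints $p_1, q_1$ (or $p_k, q_k$) and length at least $3$. Both are proved by explicit surgery: in case (a) the endpoints of $\pi$ become new branch vertices, $\pi$ becomes a new rung inserted between $\rho_i$ and $\rho_{i+1}$, and relabelling the columns exhibits the $2 \times (k+1)$ grid; in case (b) the two internal vertices of $\pi$ nearest its endpoints become the new extreme branch vertices of the two rails, the middle portion of $\pi$ becomes the new extreme rung, and the rest of $S$ is untouched. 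For the search it is convenient to record the analogous statement with $\pi$ replaced by a connected subgraph attached to $S$ at several vertices, since $3$-connectivity naturally supplies such objects.

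The core of the argument is to force one of these configurations. Here $3$-connectivity enters as follows: in any subdivision $S$ of $L_k$ in $G$, every degree-$2$ branch vertex and every subdivision vertex has a neighbour outside $S$ (its degree in $S$ is at most $2$ but at least $3$ in $G$), and every component $D$ of $G - V(S)$ is attached to $S$ by at least three edges (otherwise $G$ would have a cut of order at most $2$). Apply this with $S = S_1$, chosen vertex-minimal. Each of $S_2, \dots, S_N$ sits inside a single component of $G - V(S_1)$, so by pigeonholing on how the copies are distributed among the components, and then on the \emph{type} (position on the ladder) of the attachment vertices --- of which there are only $O(k)$ --- we reduce to a highly uniform situation and look for two attachment vertices of some $S_i$ that together trigger the bumping lemma: interiors of two matching rail segments, or the two degree-$2$ branch vertices at one end of the ladder; here we may also use attachment vertices of a component $D$ that itself contains another copy, which gives us a second ladder as a reservoir. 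When the attachments are ``bad'' --- clustered on rung interiors, on degree-$3$ branch vertices, or all on a single rail segment, and never lining up across the two rails --- no local bump is available; then one reroutes inside $S_i$, discarding a bounded number of rungs at one end of its ladder and re-creating the lost rungs by threading a fresh rung through the external structure supplied by a second copy, so as to reach the hypothesis of the bumping lemma after a bounded amount of surgery.

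The main obstacle is exactly this last step: a single triple of disjoint paths between two copies can attach to a ladder uselessly --- for instance with all three endpoints inside one rung, or all three inside one rail segment --- and then no rung can be added from the local picture alone. This is why one genuinely needs both the abundance of copies (to pigeonhole the attachment types, and to have a spare copy from which to import a rung when the local picture is bad) and a somewhat lengthy case analysis of how an external connected graph attached to three ladder vertices can be turned into an extra rung; distilling the value of $N$ that this analysis requires is the principal bookkeeping cost of the proof.
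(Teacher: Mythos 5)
Your ``bumping lemma'' and the local consequences of $3$-connectivity (every low-degree vertex of a subdivision $S$ has an outside neighbour; every component of $G-V(S)$ sends at least three edges to $S$) are correct, but they are the easy part. The heart of the theorem is precisely the step you leave as a sketch: what to do when the attachments are ``bad''. Pigeonholing the components of $G-V(S_1)$ by the $O(k)$ possible attachment types gives you many components attaching to $S_1$ in the same way, but no control whatsoever on how these components and the copies inside them relate to \emph{each other}; and your proposed repair --- ``reroute inside $S_i$, discard a few rungs, and thread a fresh rung through the external structure supplied by a second copy'' --- is unjustified: the second copy may attach to $S_i$ exactly as badly (e.g.\ through three degree-$3$ branch vertices, or entirely inside one rung), and nothing in your plan makes this recursion terminate or bounds the number of copies it consumes. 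You never produce the one configuration that is genuinely needed, namely \emph{two vertex-disjoint} connections between two copies hitting them in suitable columns, which is what lets two rooted partial ladders be glued into $L_{k+1}$; a single component attached at three vertices does not by itself yield this, which is exactly the obstacle you name but do not overcome.

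For comparison, the paper does not work locally around one copy at all. It first uses the absence of an $L_{k+1}$ minor to bound $\tdtwo(G)$ (Theorem~\ref{thm:graphs-without-long-ladders}), takes a minor-minimal $3$-connected counterexample and applies Halin's theorems on contractible/deletable edges to force all attachments of $V(G)\setminus V(H)$ onto a small prescribed set of degree-$3$ vertices (Claim~\ref{cl:deg3}), then deletes a set $Z$ of fewer than $m$ vertices so that $G-Z$ has very many blocks (Lemma~\ref{lem:many-blocks}), extracts a long path of blocks, and finally runs a rooted-ladder linkage analysis (Lemmas~\ref{lem:rooted-ladder-from-ladder}--\ref{lem:new-building-ladder}, Claims~\ref{cl:clean-subpath}--\ref{cl:5} and the three terminal cases), where $3$-connectivity is used globally to supply the two disjoint outside paths that realize the gluing. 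Your proposal has no substitute for this global structure, so as written it is a plausible plan with the decisive step missing rather than a proof.
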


Let us point out the following corollary of Theorem~\ref{thm:bumping-a-ladder}.
Robertson and Seymour~\cite{RS86} proved that for every fixed planar graph $H$ and every \(N \ge 1\), there exists \(N' \ge 1\) such that every graph $G$ not containing a union of \(N\) vertex-disjoint copies of \(H\) as a minor has a vertex subset $X$ with $|X|\leq N'$ such that $G-X$ has no $H$ minor.

\begin{corollary}\label{cor:bumping-a-ladder}
  For every integer \(k \ge 1\) there exists an integer \(N' \ge 1\) with the following property.
  Every \(3\)-connected graph \(G\) with no \(L_{k+1}\) minor has a vertex subset $X$ with $|X|\leq N'$ such that $G-X$ has no $L_k$ minor.
\end{corollary}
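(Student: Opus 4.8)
The plan is to combine Theorem~\ref{thm:bumping-a-ladder} with the Robertson--Seymour theorem quoted just above. First I would apply Theorem~\ref{thm:bumping-a-ladder} to fix an integer \(N \ge 1\) such that every \(3\)-connected graph containing a union of \(N\) vertex-disjoint copies of \(L_k\) as a minor contains \(L_{k+1}\) as a minor. Now let \(G\) be a \(3\)-connected graph with no \(L_{k+1}\) minor. Taking the contrapositive of Theorem~\ref{thm:bumping-a-ladder} with this \(N\), the graph \(G\) cannot contain a union of \(N\) vertex-disjoint copies of \(L_k\) as a minor.

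Next, observe that \(L_k\) is planar, being the \(2 \times k\) grid graph. Hence the Robertson--Seymour theorem applies with \(H = L_k\) and with the integer \(N\) just fixed: it yields an integer \(N' \ge 1\), depending only on \(L_k\) and \(N\), and thus only on \(k\), such that every graph not containing a union of \(N\) vertex-disjoint copies of \(L_k\) as a minor has a vertex subset \(X\) with \(|X| \le N'\) for which \(G - X\) has no \(L_k\) minor. Since \(G\) contains no such union, we obtain the desired set \(X\), which finishes the proof.

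There is essentially no obstacle to overcome here: the argument is a direct concatenation of the two cited results. The only point to be careful about is that Theorem~\ref{thm:bumping-a-ladder} requires the \(3\)-connectedness hypothesis, which is exactly what the corollary assumes, so nothing needs to be added. One could also note explicitly that the resulting \(N'\) depends only on \(k\) (through \(N\) and the planar graph \(L_k\)) and not on \(G\), as the statement requires.
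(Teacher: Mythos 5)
Your proposal is correct and is exactly the argument the paper intends: the corollary is stated immediately after the quoted Robertson--Seymour result precisely so that it follows by combining the contrapositive of Theorem~\ref{thm:bumping-a-ladder} with that result applied to the planar graph \(L_k\). Nothing is missing, and the dependence of \(N'\) only on \(k\) is handled as you note.
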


We remark that \(3\)-connectivity in Theorem~\ref{thm:bumping-a-ladder} is necessary.
See~Figure~\ref{fig:2-connected-counterexample}.
On the other hand, we expect that the dependence on $k$ is not.
We conjecture that there exists a constant \(N_0\) such that for every \(k\), Theorem~\ref{thm:bumping-a-ladder} holds true with \(N = N_0\).
For all we know, this might even be true with \(N_0 = 4\).
\begin{figure}[!h]
    \centering
    \includegraphics[scale=.75]{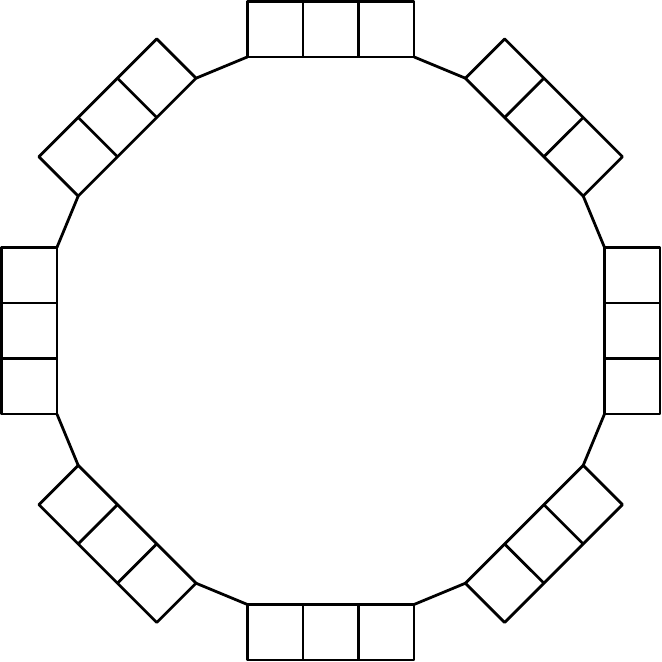}
    \caption{\label{fig:2-connected-counterexample}
    A $2$-connected graph with $8$ disjoint copies of $L_4$ and no $L_5$-minor.}
\end{figure}

We conclude this introduction with an application of our results to poset dimension.
Let \(P\) be a poset. For two elements \(x\) and \(y\) of \(P\), we say that \(y\) \emph{covers} \(x\) if \(x < y\) in \(P\) and there is no element \(z\) in \(P\) such that \(x < z < y\) in \(P\).
The \emph{cover graph} of a poset \(P\) is the graph on the ground set of \(P\) in which two vertices are adjacent if one of them covers the other in \(P\).
Informally, the cover graph of \(P\) is its Hasse diagram seen as an undirected graph.
A \emph{realizer} of \(P\) is a set \(\{\le_1, \dots, \le_d\}\) of linear orders on the ground set of \(P\) such that for any two elements \(x\) and \(y\) of \(P\), we have \(x \le y\) in \(P\) if and only if \(x \le_i y\) for every \(i \in \{1, \dots, d\}\).
The \emph{dimension} of \(P\), denoted \(\dim(P)\), is the least size of a realizer of \(P\).

We prove the following result, which shows the relevance of \(2\)-connected centered colorings for studying poset dimension.

\begin{restatable}{theorem}{TheoremPosetsTDTwo}
\label{thm:posets-tdtwo}
Let $P$ be a poset with cover graph $G$ and let $m=\tdtwo(G)$.
Then, $P$ has dimension at most \(2^{m+1} - 2\).
\end{restatable}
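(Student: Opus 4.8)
The plan is to argue by induction on $m=\tdtwo(G)$, aiming for the recursion $\dim(P)\le 2\dim(P')+2$ for an auxiliary poset $P'$ whose cover graph has $\tdtwo$ at most $m-1$; since this recursion with base value $2$ unwinds to $2^{m+1}-2$, this suffices. For the base case $m=1$, a $2$-connected centered coloring using a single colour cannot exist on a graph with an edge (that edge, being connected with no cutvertex, would need a colour used on exactly one of its two vertices), so $G$ is edgeless, $P$ is an antichain, and $\dim(P)\le 2=2^{2}-2$.

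For the inductive step, fix a $2$-connected centered coloring $\phi\colon V(G)\to[m]$ and set $X:=\phi^{-1}(m)$. The restriction of $\phi$ to $V(G)\setminus X$, using only the colours $[m-1]$, is a $2$-connected centered coloring of $G-X$: indeed $G-X$ has no vertex of colour $m$, and every subgraph of $G-X$ without a cutvertex is such a subgraph of $G$, hence receives some colour exactly once. Thus $\tdtwo(G-X)\le m-1$. Define a poset $Q$ on the ground set $V(G)\setminus X$ by declaring $x<_Q y$ precisely when the Hasse diagram of $P$ contains a strictly increasing $x$--$y$ path that avoids $X$. One verifies easily that $Q$ is a poset, that $x<_Q y$ implies $x<_P y$, and --- the crucial point --- that the cover graph of $Q$ is a subgraph of $G-X$: were a cover $x\lessdot_Q y$ witnessed by an $X$-avoiding increasing Hasse path of length at least two, its first internal vertex would lie strictly between $x$ and $y$ in $Q$, a contradiction. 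Hence the cover graph of $Q$ has $\tdtwo$ at most $m-1$, so by the induction hypothesis $\dim(Q)\le 2^{m}-2$.

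It remains to build $2\dim(Q)+2$ linear extensions of $P$ that realize $P$. Fix a realizer $\{{<}_1,\dots,{<}_d\}$ of $Q$ with $d=\dim(Q)$. The key structural observation is that every comparability of $P$ lost in passing to $Q$ is ``forced through $X$'': if $x<_P y$ with $x,y\notin X$ but $x\not<_Q y$, then $x<_P s<_P y$ for some $s\in X$; and since $Q$ is consistent with every comparability of $P$, the vertices of $X$ can be slid to an extreme of a linear order that keeps a prescribed linear extension of $Q$ on $V(G)\setminus X$. Using this I would form, for each $i\in\{1,\dots,d\}$, two linear extensions $L_i^{\downarrow},L_i^{\uparrow}$ of $P$ that each extend ${<}_i$ on $V(G)\setminus X$ but push $X$ as low, respectively as high, as $P$ permits; together with two more linear extensions $L^-,L^+$ of $P$ that push $X$ globally as low, respectively as high. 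One then checks realization pair by pair: an incomparable pair of $P$ with both ends outside $X$ is incomparable in $Q$, hence reversed by some ${<}_i$ and still reversed in the corresponding $L_i^{\downarrow}$ or $L_i^{\uparrow}$; a pair with an end in $X$ is reversed using $L^-$, $L^+$, and the extreme placements of $X$ inside the $L_i^{\downarrow},L_i^{\uparrow}$. This produces $2d+2\le 2(2^{m}-2)+2=2^{m+1}-2$ linear extensions.

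The \textbf{main obstacle} is controlling the cost of the vertices of $X$ --- in particular, reversing incomparable pairs with \emph{both} ends in $X$, which are invisible to $Q$ and not automatically handled by pushing $X$ to an extreme, since $P[X]$ need not have dimension $2$. Getting these, together with the lifting of each ${<}_i$ to a genuine linear extension of $P$, to cost only the stated additive and multiplicative constants is where the fine structure of cover graphs (independence of colour classes, the fact that covers skip over nothing) has to be used with care. A preliminary reduction to the case of $2$-connected $G$ via the block decomposition of the cover graph --- each block inherits $\tdtwo\le m$, and a cut vertex splits $P$ into induced subposets whose cover graphs are exactly the bridges --- is likely to be the natural setting for this bookkeeping.
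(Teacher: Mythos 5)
Your reductions (induction on \(m\), the base case, deleting the colour class \(X=\phi^{-1}(m)\) so that \(\tdtwo(G-X)\le m-1\), and the auxiliary poset \(Q\) whose cover graph is a subgraph of \(G-X\)) are fine as far as they go, but the heart of the argument --- turning a realizer of \(Q\) into \(2\dim(Q)+2\) linear extensions of \(P\) --- is missing, and it hides two genuine gaps. First, the lifting step as stated cannot work: if \(x,y\notin X\) are comparable in \(P\) only via chains through \(X\), then \(x\) and \(y\) are incomparable in \(Q\), so some \({<}_i\) in the realizer of \(Q\) orders them against \(P\), and then \emph{no} linear extension of \(P\) "extends \({<}_i\) on \(V(G)\setminus X\)". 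One could retreat to asking only that \(L_i^{\downarrow},L_i^{\uparrow}\) reverse the \(P\)-incomparable pairs reversed by \({<}_i\), but that requires an acyclicity argument you do not give; in the classical version of this argument the deleted set is an up-set or a down-set, which is exactly what makes the merge possible (this is what Lemma~\ref{lem:dim-apex} exploits), and a colour class is neither. Second, as you yourself flag, incomparable pairs with one or both ends in \(X\) are invisible to \(Q\), and nothing in the structure you have extracted (an independent colour class) shows they can all be reversed by a bounded number of additional extensions. So the recursion \(\dim(P)\le 2\dim(Q)+2\) is unproved, and with a whole colour class deleted at once it is unclear it can be repaired.

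The paper's proof sidesteps both issues by performing your two reductions in the opposite order and, crucially, by deleting a \emph{single vertex} rather than a colour class. It first reduces to the case of a \(2\)-connected cover graph via the theorem of Trotter, Walczak and Wang (Theorem~\ref{thm:dim-cutvertices}): if every block of the cover graph yields dimension at most \(d\), then \(\dim(P)\le d+2\); this is a real theorem, not the routine bookkeeping your closing paragraph suggests. Then, when \(G\) is \(2\)-connected, \(G\) itself is a connected subgraph without a cutvertex, so the \(2\)-connected centered coloring with \(m\) colours assigns some colour to exactly one vertex \(z\) of \(G\); hence \(\tdtwo(G-z)\le m-1\) after removing just this one vertex. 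Lemma~\ref{lem:dim-apex} then at most doubles the dimension bound, and its proof works precisely because any \(P\)-incomparable pair meeting both the up-set and the down-set of \(z\) must contain \(z\) itself --- an argument with no analogue for a set \(X\). This single-vertex observation is the idea your proposal is missing; with it, the problematic "pairs inside \(X\)" never arise, and the strengthened induction (\(2^{m+1}-4\) in the \(2\)-connected case, \(+2\) from the block theorem) gives \(2^{m+1}-2\).
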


When combined with Theorem~\ref{thm:graphs-without-long-ladders}, the above theorem implies the following result.
Proving this result was one of our motivations for studying the structure of graphs with no long ladder minor.

\begin{corollary}\label{cor:posets-without-long-ladders}
For every integer \(k \ge 1\) there exists an integer \(d \ge 1\) such that every poset whose cover graph excludes \(L_k\) as a minor has dimension at most \(d\).
\end{corollary}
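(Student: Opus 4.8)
The plan is to derive Corollary~\ref{cor:posets-without-long-ladders} directly from Theorem~\ref{thm:graphs-without-long-ladders} and Theorem~\ref{thm:posets-tdtwo}; the only thing that needs care is the bookkeeping of quantifiers, so that a single dimension bound \(d = d(k)\) comes out. First I would fix an integer \(k \ge 1\) and let \(\calC\) be the class of all graphs with no \(L_k\) minor. By construction, condition~\ref{itm:gwll-no-ladder} of Theorem~\ref{thm:graphs-without-long-ladders} holds for \(\calC\) with this very \(k\), so by the implication \ref{itm:gwll-no-ladder}~\(\Rightarrow\)~\ref{itm:gwll-2-c-coloring} of that theorem there is an integer \(m = m(k) \ge 1\) such that \(\tdtwo(G) \le m\) for every \(G \in \calC\).

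Now let \(P\) be an arbitrary poset whose cover graph \(G\) excludes \(L_k\) as a minor. Then \(G \in \calC\), hence \(\tdtwo(G) \le m\), and Theorem~\ref{thm:posets-tdtwo} gives
\[
  \dim(P) \;\le\; 2^{\,\tdtwo(G)+1} - 2 \;\le\; 2^{\,m+1} - 2 .
\]
Thus the corollary holds with \(d := 2^{m+1} - 2\), a quantity depending only on \(k\).

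The single point that must not be fudged is uniformity: Theorem~\ref{thm:graphs-without-long-ladders} has to be invoked for the whole class \(\calC\) at once, so that the bound \(m\) on \(\tdtwo\) is common to all \(L_k\)-minor-free cover graphs; this uniform bound is exactly what the equivalence \ref{itm:gwll-no-ladder}~\(\Leftrightarrow\)~\ref{itm:gwll-2-c-coloring} supplies, whereas applying things poset by poset would only yield a bound depending on the individual poset. Beyond this, there is no real obstacle inside the corollary itself — the difficulty lives entirely in the two results being combined: the structural implication \ref{itm:gwll-no-ladder}~\(\Rightarrow\)~\ref{itm:gwll-2-c-coloring} of Theorem~\ref{thm:graphs-without-long-ladders}, which produces a bounded \(2\)-connected centered coloring out of the mere absence of a ladder minor, and Theorem~\ref{thm:posets-tdtwo}, which bounds dimension in terms of \(\tdtwo\) of the cover graph in the spirit of the known bounds relating poset dimension to the treedepth of the cover graph. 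The resulting bound on \(d\) is exponential in \(m\), and since \(m = m(k)\) itself comes from the structure theorem, no attempt to optimize constants is made here.
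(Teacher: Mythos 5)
Your proposal is correct and is exactly the argument the paper intends: apply the implication \ref{itm:gwll-no-ladder}~\(\Rightarrow\)~\ref{itm:gwll-2-c-coloring} of Theorem~\ref{thm:graphs-without-long-ladders} to the class of all \(L_k\)-minor-free graphs to get a uniform bound \(m=m(k)\) on \(\tdtwo\), then conclude via Theorem~\ref{thm:posets-tdtwo} with \(d=2^{m+1}-2\). Nothing is missing.
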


Let us say that a graph $H$ is {\em unavoidable} if the cover graph of every poset with large enough dimension contains $H$ as a minor.
Corollary~\ref{cor:posets-without-long-ladders} shows that every ladder is unavoidable.
Note that the class of unavoidable graphs is closed under taking minors (thus, fans are also unavoidable, etc.).
It is an open problem to obtain a full characterization of unavoidable graphs.
Besides ladders, the only other positive result known is that $K_4$ is unavoidable~\cite{JMTWW17, S20}.
As for negative results, a classic construction of Kelly~\cite{Kel81}, see Figure~\ref{fig:kelly}, shows that there are posets with unbounded dimension whose cover graphs are planar and have pathwidth $3$.
Note that every unavoidable graph must necessarily be a minor of some graph from Kelly's construction.
We conjecture that this is precisely the characterization of unavoidable graphs.

\begin{conjecture}\label{conj:unavoidable-graphs}
A graph $H$ is unavoidable if and only if $H$ is a minor of some graph from Kelly's construction.
\end{conjecture}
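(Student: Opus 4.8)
The ``only if'' direction is immediate and is in effect already observed before the conjecture: if $H$ is not a minor of any graph in Kelly's construction, then Kelly's posets have unbounded dimension while their cover graphs are all $H$-minor-free, so $H$ is not unavoidable. The entire content of the conjecture is therefore the ``if'' direction, and the first step is to reduce it to a single family. Write $C_n$ for the cover graph of the $n$-th poset in Kelly's construction and put $\mathcal{K}=\bigcup_{n}\{G : G \text{ is a minor of } C_n\}$; this class is minor-closed, and if $H$ is a minor of $C_n$ then every $H$-minor-free graph is $C_n$-minor-free, i.e.\ $\Forb(H)\subseteq\Forb(C_n)$. Hence it suffices to show that each $C_n$ is unavoidable: any dimension bound valid for posets with a $C_n$-minor-free cover graph applies verbatim to posets with an $H$-minor-free cover graph for every $H\in\mathcal{K}$. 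In this form the conjecture asserts a \emph{grid-minor theorem for poset dimension}, with Kelly's construction playing the role of the grid -- sufficiently large dimension ought to force a large Kelly minor in the cover graph.

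I would next pin down the shape of $\mathcal{K}$. Every member is planar and has pathwidth at most $3$ (both hold for Kelly's cover graphs and pass to minors), and Kelly's construction is, up to cosmetic modifications, a ``linear weave'' of chains; from this one should be able to extract a clean combinatorial description of $\mathcal{K}$ -- for instance a path-decomposition of bounded width whose consecutive bags overlap in a prescribed pattern -- suited to the argument below. This is also the point where the statement is genuinely harder than the Robertson--Seymour grid-minor theorem: a cover graph can have pathwidth $3$, and in particular bounded treewidth, while the poset has unbounded dimension (Kelly's examples themselves), so ``large dimension'' cannot be routed through ``large treewidth of the cover graph''. Excluding $C_n$ must destroy Kelly's construction only barely, which is precisely why the proof has to be sensitive to the poset and not merely to the cover graph as an abstract graph.

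For the core step -- that each $C_n$ is unavoidable -- let $P$ be a poset with a $C_n$-minor-free cover graph $G$ and suppose $\dim(P)$ is enormous. Since $C_n$ is planar it is a minor of a large enough grid, so if $\tw(G)$ were large the grid-minor theorem would already produce a $C_n$ minor in $G$; hence $\tw(G)$ is bounded by a function of $n$, and we are in the Kelly regime: bounded-treewidth cover graph, no height bound, and -- for contradiction -- unbounded dimension. Now I would feed the large dimension of $P$ into the standard witnesses of large dimension (a long alternating cycle, equivalently a large family of min--max incomparable pairs that certifies the dimension is large) and route the incomparabilities through $G$ as internally disjoint increasing paths; along a bounded-width path-decomposition of $G$, a Ramsey-type pigeonhole on how these paths enter and leave the bags, together with planarity of the relevant region, should force two families of chains running alongside one another and crossing in the Kelly pattern -- that is, a $C_n$ minor in $G$, contradicting the hypothesis. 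The whole argument may be organized as an induction on $n$, the base cases being the known facts that $K_4$ is unavoidable and, by Corollary~\ref{cor:posets-without-long-ladders}, that every ladder (hence every fan, and more generally every small member of $\mathcal{K}$) is unavoidable, the inductive step capturing the part of the Kelly weave that goes beyond ladders.

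The main obstacle is exactly that last point: showing that large poset dimension, in the presence of a bounded-treewidth cover graph, forces a large Kelly minor. Every positive result known today -- $K_4$, ladders (the present paper), posets of bounded treewidth and bounded height, posets whose cover graphs lie in sparse or bounded-expansion classes -- is a special case of this single statement, and no technique is currently known that reconstructs the full Kelly weave; the bounded-treewidth, unbounded-dimension examples of Kelly show that any proof must exploit the poset structure in a way that the grid-minor machinery alone cannot.
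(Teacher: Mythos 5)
The statement you are asked to prove is Conjecture~\ref{conj:unavoidable-graphs}; the paper does not prove it and explicitly presents it as an open problem, so there is no ``paper proof'' to compare against. Your treatment of the ``only if'' direction is correct and matches the paper's remark preceding the conjecture: Kelly's posets have unbounded dimension, so any $H$ that is not a minor of some graph in the construction is excluded from all of their cover graphs and hence cannot be unavoidable. Your reduction of the ``if'' direction to showing that each cover graph $C_n$ of Kelly's construction is itself unavoidable is also sound, since minor containment is transitive.

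However, the core of your argument is not a proof but a research programme, and the decisive step is missing. The claim that large dimension together with a bounded-width path-decomposition of the cover graph ``should force two families of chains running alongside one another and crossing in the Kelly pattern'' is precisely the open problem; no argument is given for why the Ramsey/pigeonhole step produces the Kelly weave rather than, say, merely long chains or a ladder (which is all that the currently known techniques, including Corollary~\ref{cor:posets-without-long-ladders} and the unavoidability of $K_4$, actually deliver). You correctly diagnose why the grid-minor machinery cannot work here---Kelly's own examples have pathwidth $3$ and unbounded dimension, so treewidth of the cover graph carries no information about dimension---but you do not supply the poset-sensitive argument that would replace it. As written, the proposal establishes only what the paper already observes (the ``only if'' direction) and leaves the substantive direction unproven; you acknowledge this yourself in the final paragraph, so the honest conclusion is that this is a plan of attack on an open conjecture, not a proof.
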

\begin{figure}[t]
    \centering
    \includegraphics{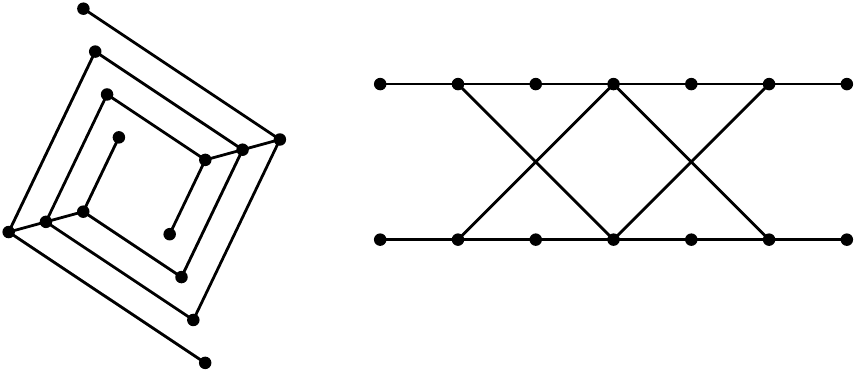}
    \caption{\label{fig:kelly}
    (Left) Hasse diagram of the poset of Kelly's construction of order $k$ for $k=4$. Its definition for an arbitrary order $k$ can be inferred from the figure.
    (Right) A free redrawing of its cover graph.}
\end{figure}

\begin{figure}[t]
  \centering
  \includegraphics{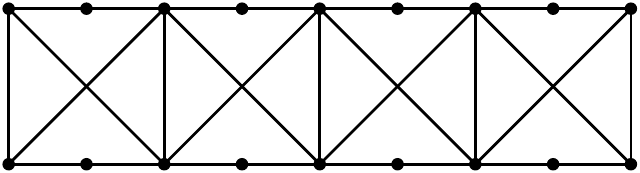}
  \caption{\label{fig:path_of_K4s}}
\end{figure}

Let us point out the following equivalent reformulation of Conjecture~\ref{conj:unavoidable-graphs}: A graph $H$ is unavoidable if and only if $H$ is a minor of some graph obtained by gluing copies of $K_4$s along edges in a path-like way, and subdividing all horizontal edges of the $K_4$s once; see Figure~\ref{fig:path_of_K4s}.
Indeed, every graph from Kelly's construction is a minor of a graph in the latter family (see Figure~\ref{fig:kelly} (right)), and vice versa.

We end with a short description of related results about poset dimension.
Unavoidable graphs are very restricted in nature, since they must be minors of some graph from Kelly's construction.
It is thus natural to consider posets excluding some fixed but arbitrary graph $H$ as a minor in their cover graphs, and ask: What kind of unavoidable structure can be found in these posets when dimension is large?
Several results have been obtained in the last decade when considering long chains as unavoidable structure.
Recall that a {\em chain} in a poset is a set of pairwise comparable elements.
The maximum length of a chain is the {\em height} of the poset.
In general, dimension is not bounded by any function of the height since there are height-$2$ posets with arbitrarily large dimension.
However, in 2014 Streib and Trotter~\cite{ST14} proved the surprising result that, if the cover graph of a poset is planar, then the poset's dimension is bounded from above by a function of its height (see~\cite{GS21+, JMW_PlanarPosets, KMT21+} for improved bounds).
This property was then shown to be true more generally if the cover graph excludes a fixed apex graph as a minor~\cite{JMMTWW}.
Then Walczak~\cite{Walczak17} showed that this remains true if any fixed graph $H$ is excluded from the cover graph as a minor, or even as a topological minor (see~\cite{MW15} for a short proof).
This was in turn generalized further: Posets with cover graphs belonging to a fixed graph class $\mathcal{C}$ with bounded expansion have dimension bounded from above by a function of their height~\cite{JMW18}.
A simple proof using a family of graph invariants called weak coloring numbers was given in~\cite{JMOdMW19}.
Finally, it was shown in~\cite{JMOdMW19} that nowhere dense graph classes\footnote{See Ne{\v{s}}et{\v{r}}il and Ossona de Mendez~\cite{NOdM-book} for background on classes with bounded expansion and on nowhere dense classes.} can be characterized in terms of dimension of bounded-height posets with cover graphs in the class.

Thus, long chains are unavoidable in posets with large dimension and well-behaved sparse cover graphs.
In the case of posets with planar cover graphs, Howard, Streib, Trotter, Walczak, and Wang~\cite{HSTWW19} strengthened this further by showing that two long incomparable chains can be found when dimension is large.
They conjectured that this remains true more generally when any fixed graph $H$ is forbidden as a minor from the cover graph.

The paper is organized as follows.
We introduce some basic definitions and notations in Section~\ref{sec:preliminaries}.
Then we prove Theorem~\ref{thm:graphs-without-long-ladders} in Section~\ref{sec:variant_center_colorings} and apply it in Section~\ref{sec:posets} to prove Theorem~\ref{thm:posets-tdtwo} about poset dimension.
Finally, we prove Theorem~\ref{thm:bumping-a-ladder} in Section~\ref{sec:bumping_ladders}.

\section{Preliminaries}
\label{sec:preliminaries}

In this section we recall some standard definitions.
For a graph $G$ and a subset $X$ of vertices, we denote by $G-X$ the subgraph of $G$ induced on the set of vertices $V(G) \setminus X$.  For a vertex $x \in V(G)$, we will use $G-x$ as shorthand notation for $G - \{x\}$.
A {\em cutvertex} of a graph \(G\) is a vertex \(v\) of \(G\) such that \(G - v\) has more connected components than \(G\).
A \emph{separation} of a graph \(G\) is a pair \((A_1, A_2)\) of vertex subsets in \(G\) such that \(A_1 \cup A_2 = V(G)\) and every edge of \(G\) has both endpoints in \(A_1\) or in \(A_2\).
The {\em order} of the separation is the number \(|A_1 \cap A_2|\), and a \emph{\(k\)-separation} is a separation of order at most \(k\).
A separation \((A_1, A_2)\) of \(G\) is \emph{trivial} if \(A_1 = V(G)\) or \(A_2 = V(G)\), and \emph{nontrivial} otherwise.
For \(k \ge 1\), we say that a graph is \emph{\(k\)-connected} if it has at least \(k+1\) vertices and does not admit a nontrivial \((k-1)\)-separation.

A \emph{subdivision} of a graph \(H\) is a graph obtained by replacing some edges of \(H\) with new paths between their endpoints such that none of the paths has an inner vertex in \(V(H)\) or on another new path.
An \emph{\(H\)-model} in a graph \(G\) is a function \(\phi\) which assigns to each vertex \(u \in V(H)\) a connected subgraph \(\phi(u)\) of \(G\), such that
\begin{enumerate}
  \item the graphs \(\phi(u)\) and  \(\phi(v)\) are vertex-disjoint for distinct vertices \(u\) and \(v\) of \(H\), and
  \item \(G\) has an edge between \(\phi(u)\) and \(\phi(v)\) for every edge \(u v \in E(H)\).
\end{enumerate}
A graph \(H\) is a \emph{minor} of a graph \(G\) if and only if there is an \(H\)-model in \(G\).

For two vertex subsets \(A\) and \(B\) in a graph \(G\), an \emph{\(A\)--\(B\) path} is a path with one endpoint in \(A\) and the second in \(B\), and with no internal vertex in \(A\cup B\).
For two vertices \(a\) and \(b\) of \(G\), by an \emph{\(a\)--\(b\) path} we mean  an \(\{a\}\)--\(\{b\}\) path.
If \(u\) and \(v\) are vertices on a path \(P\), then the only \(u\)--\(v\) path in \(P\) is denoted by \(u P v\).

For a positive integer \(k\), the \emph{ladder} \(L_k\) is the graph with vertex set \(\{1, 2\} \times \{1, \dots, k\}\) in which two vertices \((i, j)\) and \((i', j')\) are adjacent if \(|i - i'| + |j - j'| = 1\).
Since the maximum degree of \(L_k\) is at most \(3\), the ladder \(L_k\) is a minor of a graph \(G\)
if and only if \(G\) has a subgraph isomorphic to a subdivision of \(L_k\).

If \((z_1, z_2)\) is a pair of vertices in a graph \(H\) and \(\phi\) is an \(L_k\)-model in $H$ such that \(z_1 \in V(\phi((1, k)))\) and \(z_2 \in V(\phi((2, k)))\), then we say that \(\phi\) is \emph{rooted} at the pair \((z_1, z_2)\).

\section{\(2\)-connected centered colorings}
\label{sec:variant_center_colorings}

The goal of this section is to prove Theorem~\ref{thm:graphs-without-long-ladders}.
We start by establishing some basic properties of \(2\)-connected centered colorings.

\begin{lemma}\label{lem:l-m-apices2}
  For every graph \(G\) and subset \(X \subseteq V(G)\) we have
  \[\tdtwo(G - X) \ge \tdtwo(G) - |X|.\]
\end{lemma}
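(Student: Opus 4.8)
The inequality is equivalent to showing $\tdtwo(G) \le \tdtwo(G-X) + |X|$, so the natural approach is to take an optimal $2$-connected centered coloring of $G-X$ and extend it to $G$ by giving each vertex of $X$ its own private new color. Formally, if $c \colon V(G-X) \to \{1, \dots, \tdtwo(G-X)\}$ is a $2$-connected centered coloring and $X = \{x_1, \dots, x_t\}$ with $t = |X|$, define $c' \colon V(G) \to \{1, \dots, \tdtwo(G-X)+t\}$ by $c'(v) = c(v)$ for $v \notin X$ and $c'(x_i) = \tdtwo(G-X) + i$ for each $i$. I claim $c'$ is a $2$-connected centered coloring of $G$, which immediately gives the bound.

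To verify the claim, let $H$ be a connected subgraph of $G$ with no cutvertex (so $H$ is a single vertex, a single edge, or $2$-connected). I would split into two cases according to whether $H$ meets $X$. If $V(H) \cap X \ne \emptyset$, pick any $x_i \in V(H) \cap X$; since the colors $\tdtwo(G-X)+1, \dots, \tdtwo(G-X)+t$ are each used on exactly one vertex of all of $G$ under $c'$, the color $c'(x_i)$ appears exactly once in $H$, and we are done. If $V(H) \cap X = \emptyset$, then $H$ is a connected subgraph of $G - X$ with no cutvertex, so by hypothesis some color of $c$ — hence of $c'$ — appears exactly once on $V(H)$.

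**Main obstacle.** There is essentially no obstacle here; the only point requiring a moment's care is the trivial-case bookkeeping. One should make sure the statement still holds when $X = V(G)$ (so $G - X$ is the empty graph): the convention must be that $\tdtwo$ of the empty graph is $0$, and then the inequality reads $0 \ge \tdtwo(G) - |V(G)|$, i.e. $\tdtwo(G) \le |V(G)|$, which is clear since coloring every vertex distinctly is always a valid $2$-connected centered coloring. Similarly one should note that $X$ need not be specified as a separator or have any structural role — the argument works for an arbitrary vertex subset, which is exactly what is stated. I would therefore present the proof in the ``extend by fresh colors'' form above, remark on the empty-graph convention in a sentence, and conclude.
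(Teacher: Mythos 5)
Your proof is correct and follows essentially the same approach as the paper: extend an optimal \(2\)-connected centered coloring of \(G-X\) by assigning each vertex of \(X\) a fresh private color, then observe that any connected subgraph without a cutvertex either meets \(X\) (and the fresh color is unique) or lies entirely in \(G-X\). The only difference is your explicit remark about the \(X=V(G)\) edge case, which the paper leaves implicit.
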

\begin{proof}
  Let \(\varphi\) be a \(2\)-connected centered coloring of \(G - X\) using at most \(\tdtwo(G - X)\) colors, and extend it
  to a coloring \(\varphi'\) of \(G\) using at most \(\tdtwo(G - X) + |X|\) colors by assigning new distinct colors to the vertices of \(X\).
  Consider a connected subgraph \(H\) of \(G\) which does not have a cutvertex.
  If \(H\) contains a vertex from \(X\), then the color of that vertex is unique in \(G\) and thus in \(H\).
  If \(H \subseteq G - X\), then some color is assigned to exactly one vertex of \(H\) because \(\varphi\) is a \(2\)-connected centered coloring of \(G - X\).
  Hence \(\varphi'\) is a \(2\)-connected centered coloring of \(G\) using \(\tdtwo(G - X) + |X|\) colors, and therefore \(\td_2(G) \le \td_2(G - X) + |X|\),
  so indeed \(\tdtwo(G - X) \ge \tdtwo(G) - |X|\).
\end{proof}

\begin{lemma}\label{lem:td2-separation}
  If \((A_1, A_2)\) is a separation of order \(\leq 1\) of a graph \(G\), then \[\tdtwo(G) = \max \set{\tdtwo(G[A_1]), \tdtwo(G[A_2])}.\]
\end{lemma}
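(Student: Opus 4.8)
The plan is to prove the two inequalities $\tdtwo(G) \ge \max\{\tdtwo(G[A_1]),\tdtwo(G[A_2])\}$ and $\tdtwo(G) \le \max\{\tdtwo(G[A_1]),\tdtwo(G[A_2])\}$ separately. For the first, I would observe that $\tdtwo$ is monotone under taking subgraphs: if $\varphi$ is a $2$-connected centered coloring of a graph and $H'$ is a subgraph of it, then $\varphi$ restricted to $H'$ is again a $2$-connected centered coloring, since every connected cutvertex-free subgraph of $H'$ is also a connected cutvertex-free subgraph of the larger graph. Applying this to the subgraphs $G[A_1]$ and $G[A_2]$ of $G$ yields $\tdtwo(G) \ge \tdtwo(G[A_i])$ for $i \in \{1,2\}$, hence the first inequality.

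For the reverse inequality, the key structural observation is that every connected subgraph $H$ of $G$ without a cutvertex satisfies $V(H) \subseteq A_1$ or $V(H) \subseteq A_2$. Indeed, otherwise $(V(H)\cap A_1,\, V(H)\cap A_2)$ would be a nontrivial separation of $H$ of order at most $1$ (here I use that $A_1 \cap A_2$ has at most one vertex); if its order is $0$ then $H$ is disconnected, and if its order is $1$ then the unique vertex of $V(H)\cap A_1 \cap A_2$ is a cutvertex of $H$ --- in both cases a contradiction.

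Now set $m = \max\{\tdtwo(G[A_1]),\tdtwo(G[A_2])\}$ and pick, for $i \in \{1,2\}$, a $2$-connected centered coloring $\varphi_i$ of $G[A_i]$ using colors from a common set of size $m$. If the separation has order $0$, the parts are vertex-disjoint, so $\varphi_1$ and $\varphi_2$ glue to a well-defined coloring $\varphi$ of $G$ with at most $m$ colors. If the separation has order $1$, say $A_1 \cap A_2 = \{v\}$, I would first permute the colors of $\varphi_2$ so that $\varphi_2(v) = \varphi_1(v)$; this changes neither the number of colors used nor the property of being a $2$-connected centered coloring, and afterwards $\varphi_1$ and $\varphi_2$ agree on $A_1 \cap A_2$, so they glue to a coloring $\varphi$ of $G$ with at most $m$ colors. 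In either case, given a connected cutvertex-free subgraph $H$ of $G$, the structural observation places $H$ inside some $G[A_i]$, and since $\varphi$ restricted to $A_i$ equals $\varphi_i$, some color appears on exactly one vertex of $H$. Hence $\varphi$ is a $2$-connected centered coloring of $G$ and $\tdtwo(G) \le m$.

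The only slightly delicate points are the structural observation linking a nontrivial $1$-separation of $H$ to a cutvertex of $H$, and the color-permutation step needed to align $\varphi_1$ and $\varphi_2$ on the shared vertex; both are elementary, so I do not anticipate a genuine obstacle. Trivial separations need no separate treatment, since if $A_1 = V(G)$ then $G[A_2]$ is a subgraph of $G = G[A_1]$ and the identity is immediate.
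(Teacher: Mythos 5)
Your proposal is correct and follows essentially the same route as the paper's proof: both directions are handled identically, by restricting a coloring of $G$ to the parts for the lower bound, and by permuting colors to agree on $A_1 \cap A_2$, gluing, and noting that every connected cutvertex-free subgraph of $G$ must lie entirely in $A_1$ or $A_2$ for the upper bound. Your justification of that last containment (a nontrivial $\le 1$-order separation of $H$ would force disconnectedness or a cutvertex) is exactly the step the paper asserts without elaboration.
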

\begin{proof}
  If \(\varphi\) is a \(2\)-connected centered coloring of \(G\),
  then the restrictions of \(\varphi\) to \(A_1\) and \(A_2\) are \(2\)-connected centered colorings of \(G[A_1]\) and \(G[A_2]\), respectively, and these colorings use no more colors than \(\varphi\).
  Therefore \(\tdtwo(G[A_i]) \le \tdtwo(G)\) for \(i \in \set{1, 2}\).

  Let \(m = \max \set{\tdtwo(G[A_1]), \tdtwo(G[A_2])}\),
  and let \(\varphi_1 \colon A_1 \to \set{1, \dots, m}\) and \(\varphi_2 \colon A_2 \to \set{1, \dots, m}\) be \(2\)-connected centered colorings of \(G[A_1]\) and \(G[A_2]\), respectively.
  Since \(|A_1 \cap A_2| \le 1\), after permuting the colors in one of the colorings \(\varphi_1\) or \(\varphi_2\) we may assume that they agree on \(A_1 \cap A_2\).
  Let \(\varphi \colon V(G) \to \set{1, \dots, m}\) be a vertex coloring such that its restriction to \(A_i\) is \(\varphi_i\) for \(i \in \set{1, 2}\).
  For every connected subgraph \(H\) of \(G\) which does not have a cutvertex, the separation \((A_1 \cap V(H), A_2 \cap V(H))\) of \(H\) must be trivial, so \(V(H) \subseteq A_i\) for some \(i \in \set{1, 2}\).
  Since the restriction of \(\varphi\) to \(A_i\) is a \(2\)-connected centered coloring of \(G[A_i]\), some vertex of \(H\) receives a color not used for any other vertex of \(H\).
  This proves that \(\varphi\) is a \(2\)-connected centered coloring of \(G\) and \(\td_2(G) \le m\) as required.
\end{proof}

\begin{lemma}\label{lem:2-cc-blocks}
  Every graph \(G\) contains a connected subgraph \(B\) without a cutvertex such that \(\tdtwo(B) = \tdtwo(G)\).
\end{lemma}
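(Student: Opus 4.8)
The plan is to prove this by induction on $|V(G)|$, with Lemma~\ref{lem:td2-separation} doing all the work. The base of the induction is the case where $G$ is connected and has no cutvertex (which in particular covers every graph with at most one vertex): here we simply take $B = G$, which is a connected subgraph of $G$ without a cutvertex and obviously satisfies $\tdtwo(B) = \tdtwo(G)$.

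For the inductive step, assume $G$ is \emph{not} a connected graph without a cutvertex. Then either $G$ is disconnected or $G$ has a cutvertex, and in both cases $G$ admits a nontrivial separation $(A_1, A_2)$ of order at most $1$: if $G$ is disconnected take $A_1 = V(D)$ and $A_2 = V(G) \setminus V(D)$ for a connected component $D$ of $G$, while if $v$ is a cutvertex of $G$ take $A_1 = V(C) \cup \set{v}$ and $A_2 = (V(G) \setminus V(C)) \cup \set{v}$ for a connected component $C$ of $G - v$. In either case $\emptyset \ne A_1 \ne V(G)$, so $G[A_1]$ has strictly fewer vertices than $G$, and symmetrically for $A_2$. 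By Lemma~\ref{lem:td2-separation}, $\tdtwo(G) = \max\set{\tdtwo(G[A_1]), \tdtwo(G[A_2])}$, so after relabelling we may assume $\tdtwo(G[A_1]) = \tdtwo(G)$. Applying the inductive hypothesis to $G[A_1]$ yields a connected subgraph $B$ of $G[A_1]$ without a cutvertex with $\tdtwo(B) = \tdtwo(G[A_1]) = \tdtwo(G)$. Since a subgraph of $G[A_1]$ is a subgraph of $G$, the graph $B$ is as required, completing the induction.

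There is essentially no obstacle once Lemma~\ref{lem:td2-separation} is in hand; the only point deserving a line of care is the observation that a graph which fails to be ``connected and cutvertex-free'' always exhibits a nontrivial separation of order at most $1$ (the standard fact underlying block decompositions), together with the bookkeeping that such a separation is strictly smaller on each side so that the induction is legitimate.
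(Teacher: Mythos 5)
Your proof is correct and follows essentially the same route as the paper: induction on the number of vertices, with the base case $B=G$ when $G$ is connected and cutvertex-free, and otherwise an application of Lemma~\ref{lem:td2-separation} to a nontrivial separation of order at most $1$ followed by the induction hypothesis on the side realizing $\tdtwo(G)$. The only difference is cosmetic: you spell out how such a separation arises from a component or a cutvertex, which the paper leaves implicit.
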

\begin{proof}
  We prove the lemma by induction on the size of the graph.
  If \(G\) is connected and does not have a cutvertex, then the lemma holds with \(B = G\).
  Otherwise, let \((A_1, A_2)\) be a nontrivial separation of order \(\leq 1\) of \(G\).
  Without loss of generality we assume that \(\tdtwo(G[A_1]) \le \tdtwo(G[A_2])\).
  By Lemma~\ref{lem:td2-separation}, we have \(\tdtwo(G[A_2]) = \tdtwo(G)\).
  By induction hypothesis applied to \(G[A_2]\) (which is a proper subgraph of \(G\)), there is a connected subgraph \(B\) of \(G[A_2]\) without a cutvertex such that \(\tdtwo(B) = \tdtwo(G[A_2]) = \tdtwo(G)\), which completes the proof.
\end{proof}

We will also need the following classical result by Erd\H{o}s and Szekeres~\cite{ES35}.

\begin{theorem}[Erd\H{o}s-Szekeres Theorem]\label{thm:erdos-szekeres}
  Let \(k \ge 1\) be an integer, let \(n = {(k-1)}^2 + 1\), and let \(a_1, \dots, a_n\) be a sequence of distinct integers.
  Then there exist integers \(i_1, \dots, i_k\) with \(1 \le i_1 < \dots < i_k \le n\) such that \(a_{i_1} < \dots < a_{i_k}\) or \(a_{i_1} > \dots > a_{i_k}\).
\end{theorem}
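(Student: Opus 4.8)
The plan is to run the classical pigeonhole argument. For each index \(i \in \{1, \dots, n\}\), define \(u_i\) to be the maximum length of an increasing subsequence of \(a_1, \dots, a_n\) whose last term is \(a_i\), and \(d_i\) to be the maximum length of a decreasing subsequence whose last term is \(a_i\). Both are well-defined integers that are at least \(1\), witnessed by the one-term subsequence \((a_i)\).

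The crux is the claim that the map \(i \mapsto (u_i, d_i)\) is injective. To see this, take two indices with \(i < j\). Since the \(a_\ell\) are pairwise distinct, either \(a_i < a_j\) or \(a_i > a_j\). In the first case, any increasing subsequence ending at position \(i\) can be prolonged by appending \(a_j\), so \(u_j \ge u_i + 1\); in the second case, analogously \(d_j \ge d_i + 1\). In either case \((u_i, d_i) \ne (u_j, d_j)\), proving injectivity.

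Now suppose, for contradiction, that there is no increasing subsequence of length \(k\) and no decreasing subsequence of length \(k\). Then \(u_i, d_i \in \{1, \dots, k-1\}\) for every \(i\), so the pairs \((u_i, d_i)\) all lie in a set of size \((k-1)^2\). But by injectivity these \(n = (k-1)^2 + 1\) pairs are distinct, a contradiction. Hence there is an increasing subsequence \(a_{i_1} < \dots < a_{i_k}\) or a decreasing subsequence \(a_{i_1} > \dots > a_{i_k}\) with \(1 \le i_1 < \dots < i_k \le n\), which is exactly what is claimed.

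I do not expect any genuine obstacle here; the only step needing care is the injectivity claim, and it follows at once from the hypothesis that the \(a_i\) are distinct. (An alternative route would be Dilworth's theorem applied to a suitable partial order on \(\{1, \dots, n\}\), but the self-contained pigeonhole argument above is shorter and keeps the paper free of extra prerequisites.)
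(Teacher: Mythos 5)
Your proof is correct: it is the standard Seidenberg pigeonhole argument (the pairs \((u_i,d_i)\) and the injectivity claim are exactly right, and the distinctness hypothesis is used precisely where needed). Note that the paper does not prove this statement at all --- it simply cites it as a classical result of Erd\H{o}s and Szekeres --- so there is nothing to compare against; your self-contained argument is a perfectly good substitute for the citation.
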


Recall that \(L_k\) denotes the ladder with \(k\) rungs, with vertex set \(\{1, 2\} \times \{1, \dots, k\}\),
and an \(L_k\)-model \(\phi\) is rooted at a pair \((z_1, z_2)\) if \(z_i \in V(\phi((i, k)))\) for \(i \in \set{1, 2}\).

\begin{lemma}\label{lem:lk-or-rooted-lt}
  Let \(k \ge 1\) and \(t \ge 1\) be integers, let \(s = {(k-1)}^2+2\), let \(G\) be a $2$-connected graph, and let \(x_1\) and \(x_2\) be distinct vertices of \(G\).
  If \(\tdtwo(G) > t \cdot s\), then at least one of the following holds:
  \begin{enumerate}
    \item\label{itm:lk minor} \(G\) has an \(L_k\) minor, or
    \item\label{itm:rooted-lt-model} \(G\) has an \(L_t\)-model rooted at the pair \((x_1, x_2)\).
  \end{enumerate}
\end{lemma}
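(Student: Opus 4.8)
The plan is to argue by induction on $|V(G)|$, using the building blocks established above. If $G$ has a $2$-separation separating $x_1$ and $x_2$, or more precisely if there is a "small" cut we can exploit, we will want to pass to a smaller graph while keeping the treedepth large (via Lemmas~\ref{lem:l-m-apices2} and~\ref{lem:td2-separation}); the Erd\H{o}s--Szekeres theorem and the constant $s = (k-1)^2+2$ strongly suggest that the heart of the argument is a nested-path structure where either the nesting produces a long ladder minor (outcome~\eqref{itm:lk minor}) or, if the nesting depth stays below $k$, we can concatenate $t$ levels of the structure to build the rooted $L_t$-model (outcome~\eqref{itm:rooted-lt-model}). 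So I would first reduce to the case where $G$ has no cutvertex and no "useless" small separations, then build an iterated sequence of separations.

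Here is the skeleton I would carry out. First, since $G$ is $2$-connected there are two vertex-disjoint $\{x_1,x_2\}$--$\{?\}$ structures; more usefully, take a vertex $v$ and consider $G - v$ or consider separations $(A_1,A_2)$ with $x_1,x_2$ both in $A_1$ and with $|A_1\cap A_2|\le 2$ chosen so that $G[A_2]$ (together with the at most two boundary vertices) still has large $\tdtwo$. By Lemma~\ref{lem:l-m-apices2}, removing the $\le 2$ boundary vertices from a piece drops $\tdtwo$ by at most $2$, so as long as the bound $t\cdot s$ is large we can keep recursing: define a maximal chain of nested separations $(A_1^{(1)},A_2^{(1)}) \preceq (A_1^{(2)},A_2^{(2)}) \preceq \cdots$ where each successive "annulus" $G[A_2^{(j)}]\setminus A_2^{(j+1)}$ contributes a chunk of treedepth of size roughly $s$. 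After at most $t$ steps we either run out of treedepth (impossible, since $\tdtwo(G) > t\cdot s$) or we get $t$ concentric annuli, each attached to the next through a $\le 2$-vertex boundary.

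Next, within a single annulus with boundary pairs $(a,b)$ and $(a',b')$ (on its two sides), the fact that $\tdtwo$ of the annulus exceeds $s = (k-1)^2+2$ forces, by Lemma~\ref{lem:2-cc-blocks} plus a routine argument, the existence of two vertex-disjoint $\{a,b\}$--$\{a',b'\}$ paths together with many "rungs" between them — essentially a short ladder segment crossing the annulus, with a well-defined order in which the rungs attach along each path. Apply Erd\H{o}s--Szekeres to these $(k-1)^2+1$ rung-attachment orderings: either we find $k$ rungs appearing in the same order along both sides (giving an $L_k$ minor inside this annulus, hence outcome~\eqref{itm:lk minor}), or we find $k$ rungs in opposite orders, which lets us "reflect" and still concatenate the annulus into the growing rooted ladder model. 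Stacking the annuli radially from $(x_1,x_2)$ outward then yields the rooted $L_t$-model in outcome~\eqref{itm:rooted-lt-model}.

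The main obstacle I anticipate is making the "annulus" decomposition precise and well-founded: one must choose the nested separations so that the boundary stays of order $\le 2$, so that each annulus genuinely carries $\Theta(s)$ treedepth (this is where Lemma~\ref{lem:l-m-apices2} and Lemma~\ref{lem:td2-separation} are combined, and where the exact constant $t\cdot s$ gets pinned down), and so that the pieces glue into a single ladder model respecting the roots $(x_1,x_2)$ — in particular, handling the degenerate cases where a boundary has only one vertex, or where the "two disjoint crossing paths" in an annulus partially coincide with the boundary, will require care. I also expect some bookkeeping to ensure that the Erd\H{o}s--Szekeres reversal in one annulus can be absorbed by relabeling the two sides of the ladder without conflicting with the neighboring annuli; this is the kind of thing that is conceptually clear but notationally heavy, so I would isolate it as a separate sub-claim.
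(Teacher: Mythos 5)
There is a genuine gap: your proposal rests on a static decomposition of \(G\) into nested ``annuli'' bounded by separations of order at most \(2\), each carrying roughly \(s\) worth of \(\tdtwo\), but no mechanism is given for producing such a decomposition, and in general it does not exist --- a \(3\)-connected (or more highly connected) graph has no nontrivial \(2\)-separations at all, yet the lemma must still hold for it. Moreover, Lemma~\ref{lem:td2-separation} only gives additivity-type control for separations of order \(\le 1\), so the claim that each annulus ``genuinely carries \(\Theta(s)\) treedepth'' has no support; and the sub-claim that \(\tdtwo > s\) inside an annulus forces two disjoint crossing paths with \((k-1)^2+1\) rungs is false as stated --- large \(\tdtwo\) alone does not produce rungs, and this assertion is precisely the hard content being assumed. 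The paper's constant \(t\cdot s\) is explained by a different mechanism that your sketch is missing: an induction on \(t\) (not on \(|V(G)|\)) in which one takes two internally disjoint \(x_1\)--\(x_2\) paths \(P,P'\) (from \(2\)-connectivity) and applies Menger's theorem between \(V(P)\) and \(V(P')\). If there are \(s+1\) disjoint connecting paths, Erd\H{o}s--Szekeres (Theorem~\ref{thm:erdos-szekeres}) gives \(L_k\) at once; otherwise there is a separator \(X\) with \(|X|\le s\) --- of order up to \(s\), not \(2\) --- and deleting it costs at most \(s\) in \(\tdtwo\) by Lemma~\ref{lem:l-m-apices2}, which is exactly one ``level'' of the budget \(t\cdot s\). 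One then passes to a block \(B\) of \(G-X\) with \(\tdtwo(B)>(t-1)s\) via Lemma~\ref{lem:2-cc-blocks}, routes two disjoint paths from \(\{x_1,x_2\}\) to \(B\), applies the induction hypothesis with parameter \(t-1\) to \(B\), and extends the resulting rooted \(L_{t-1}\)-model by one rung using a subpath of whichever of \(P,P'\) the model avoids (it avoids one of them because \(X\) separates them). Your proposal never identifies Menger's theorem or this recursion, so the core of the argument is missing rather than merely ``notationally heavy.''

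A smaller but real error: in the Erd\H{o}s--Szekeres step you treat the monotone decreasing case as an obstruction to be ``absorbed by reflection,'' but in fact both the increasing and the decreasing orderings of the rungs along \(P'\) yield a subdivision of \(L_k\) (the ladder simply twists); no relabeling bookkeeping between neighboring pieces is needed, and building the argument around such bookkeeping would send you down the wrong path.
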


\begin{proof}
  We prove the lemma by induction on \(t\).
  Suppose first \(t = 1\).  Since $G$ is connected, there is an \text{\(x_1\)--\(x_2\)} path in $G$, and so~\ref{itm:rooted-lt-model} holds.

  Now suppose that \(t \ge 2\).
  By \(2\)-connectivity of \(G\), there exist two internally disjoint \text{\(x_1\)--\(x_2\) paths} \(P\) and \(P'\).
  By Menger's Theorem, either there exist \(s + 1\) disjoint \(V(P)\)--\(V(P')\) paths in \(G\), or there exists a set of at most \(s\) vertices separating \(V(P)\) from \(V(P')\).

  \textbf{Case 1: there exist \(s + 1\) disjoint \(V(P)\)--\(V(P')\) paths \(Q_1, \dots, Q_{s + 1}\).}
  We assume that the paths are listed in the order in which they intersect the path \(P\) when traversing it from \(x_1\) to \(x_2\).
  Let \(\pi\) be a permutation of \(\{1, \dots, s + 1\}\) such that \(Q_{\pi(1)}, \dots, Q_{\pi(s+1)}\) is the order in which the paths intersect \(P'\) when traversing it from \(x_1\) to \(x_2\).
  The paths \(P\) and \(P'\) are internally disjoint, so for \(2 \le i \le s\), the path \(Q_i\) has two distinct endpoints.
  Consider the sequence \(\pi(2), \dots, \pi(s)\) of length \(s-1 = {(k-1)}^2 + 1\).
  By Theorem~\ref{thm:erdos-szekeres} applied to that sequence, there exist indices \(2 \le i_1 < \cdots < i_k \le s\) such that either \(\pi(i_1) < \cdots < \pi(i_k)\), or \(\pi(i_1) > \cdots > \pi(i_{k})\).
  In both cases \(G\) has a subgraph isomorphic to a subdivision of \(L_k\) obtained as a union of a subpath of \(P\), a subpath of \(P'\) and the paths \(Q_{i_1}\), \dots, \(Q_{i_k}\).
  Hence~\ref{itm:lk minor} is satisfied.
  See~Figure~\ref{fig:case-1}.

\begin{figure}[!h]
    \centering
    \includegraphics[scale=1.0]{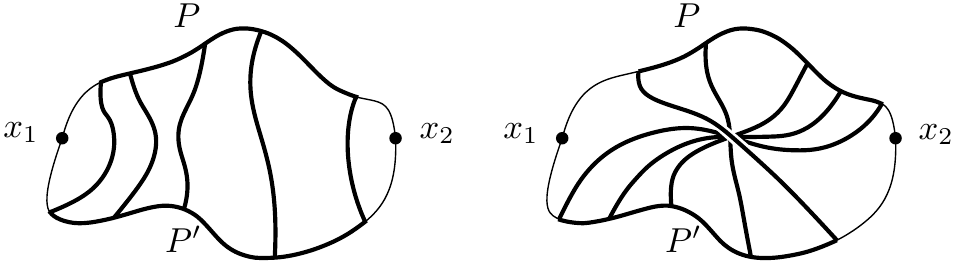}
    \caption{\label{fig:case-1}
    Case 1: two ways how the paths $P$, $P'$ and $Q_{i_1},\ldots,Q_{i_k}$ induce a subdivision of $L_k$.}
\end{figure}

  \textbf{Case 2: there is a set \(X\) of at most \(s\) vertices separating \(V(P)\) from \(V(P')\).}
  See Figure~\ref{fig:case-2}.
  We have \(\tdtwo(G) > t \cdot s\), so by Lemma~\ref{lem:l-m-apices2}, we also have \(\tdtwo(G - X) \ge \tdtwo(G) - |X| > ts - |X| \ge ts - s\).
  Hence by Lemma~\ref{lem:2-cc-blocks} there exists a connected subgraph \(B\) of \(G - X\) without a cutvertex such that \(\tdtwo(B) > ts - s\).
  Since \(\tdtwo(B) > ts - s \geq 2s - s \ge 2\), we have \(|V(B)| > 2\).
  By \(2\)-connectivity of \(G\), there exist two disjoint \(\{x_1, x_2\}\)--\(V(B)\) paths \(Q_1\) and \(Q_2\) in \(G\) with \(x_i \in V(Q_i)\) for \(i \in \{1, 2\}\).
  Let \(y_i\) denote the endpoint of \(Q_i\) in \(B\) for \(i \in \{1, 2\}\).
  Since \(\tdtwo(B) > ts - s = (t-1)s\),
  we can apply induction hypothesis to \(k\), \(t-1\), \(B\), \(y_1\) and \(y_2\).
  If \(B\) has an \(L_k\) minor, then so does \(G\), so~\ref{itm:lk minor} holds.
  Otherwise, there is an \(L_{t-1}\)-model \(\phi'\) in \(B\) such that \(y_i\) is in \(\phi'((i,t-1))\) for \(i \in \{1, 2\}\).
  Since \(B \subseteq G - X\) and the set \(X\) separates \(V(P)\) from \(V(P')\), the model \(\phi'\) intersects at most one of the paths \(P\) and \(P'\).
  Without loss of generality, let us assume that \(\phi'\) does not intersect \(P\).
  For each \(i \in \set{1, 2}\), we have \(x_i \in V(P) \cap V(Q_i)\), so \(V(P) \cap V(Q_i) \neq \emptyset\).
  Let \(R\) be a \(V(Q_1)\)--\(V(Q_2)\) subpath of \(P\).
  Since \(R\) is disjoint from the model \(\phi'\), we can see that~\ref{itm:rooted-lt-model} is witnessed by an \(L_t\)-model \(\phi\) such that
  \(\phi((i, j)) = \phi'((i, j))\) for \((i, j) \in \{1, 2\} \times \{1, \dots, t-1\}\), \(\phi((1, t)) = Q_1 - y_1\) and \(\phi((2, t)) = (Q_2 - y_2) \cup (R -  V(Q_1))\).
\begin{figure}[!h]
    \centering
    \includegraphics[scale=1.0]{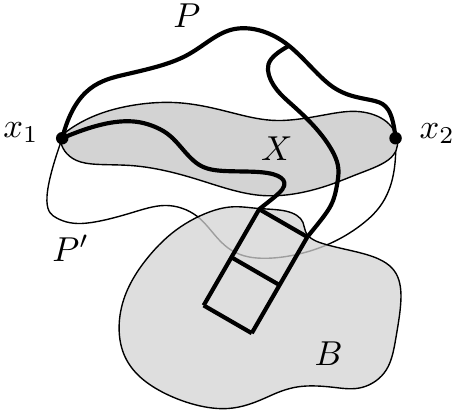}
    \caption{\label{fig:case-2}
    Case 2: An $L_3$-model in $B$ extended to an $L_4$-model rooted at $(x_1,x_2)$ in $G$.}
\end{figure}
\end{proof}

Let \(k \geq 1\) and let \(H\) be a graph isomorphic to a subdivision of \(L_k\).
Then the graph \(H\) is a union of \(k+2\) paths \(P_1\), \(P_2\), \(Q_1\), \ldots, \(Q_k\), where
\begin{enumerate}
\item the paths \(P_1\) and \(P_2\) are disjoint,
\item \(Q_1\), \dots, \(Q_k\) are disjoint \(V(P_1)\)--\(V(P_2)\) paths which intersect the paths \(P_1\) and \(P_2\) in the order in which they are listed,
\item the
paths \(Q_1\) and \(Q_k\) have their endpoints in the endpoints of the paths \(P_1\) and \(P_2\).
\end{enumerate}
For such paths \(P_1\), \(P_2\), \(Q_1\), \dots, \(Q_k\), we call the tuple \((P_1, P_2; Q_1, \dots, Q_k)\) a \emph{subdivision model} of \(L_k\) in \(H\).

\begin{lemma}\label{lem:uniform-model}
  Let \(c \geq 1\) be an integer, let \(H\) be a graph isomorphic to a subdivision of \(L_{2^c}\) with a subdivision model \((P_1, P_2; Q_1, \dots, Q_{2^c})\), and let \(\varphi\) be a cycle centered coloring of \(H\).
  If the sets of colors used by \(\varphi\) on the paths \(Q_1, \dots, Q_{2^c}\) are all the same, then \(\varphi\) uses more than \(c\) colors.
\end{lemma}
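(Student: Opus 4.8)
The plan is to argue by induction on $c$. For the base case $c=1$, the graph $H$ is a subdivision of $L_2$ and hence contains at least one edge; applying the defining property of a cycle centered coloring to that edge shows $\varphi$ is proper, so it uses at least $2>1$ colors.

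For the inductive step I would take $c\ge 2$ and suppose, towards a contradiction, that $\varphi$ uses at most $c$ colors. Write $S$ for the common color set of the rungs $Q_1,\dots,Q_{2^c}$. The object to focus on is the \emph{outer cycle} $C$ of $H$, obtained by concatenating $Q_1$, $P_2$, $Q_{2^c}$ and $P_1$ in this cyclic order; this is a genuine cycle of $H$ because in a subdivision model these four paths are pairwise internally disjoint, and crucially $V(C)=V(Q_1)\cup V(Q_{2^c})\cup V(P_1)\cup V(P_2)$, so $C$ meets \emph{all} of $P_1$ and $P_2$. Since $\varphi$ is cycle centered, some color $\gamma$ occurs on exactly one vertex of $C$. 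Every color of $S$ occurs on $V(Q_1)$ and on the disjoint set $V(Q_{2^c})$, hence at least twice on $C$, so $\gamma\notin S$. As every rung uses color set $S$, the color $\gamma$ occurs on no rung, so its unique occurrence on $C$ is in fact its unique occurrence in all of $H$, say at a vertex $w$. In particular $w$ lies on no $Q_i$, so $w$ is an interior vertex of a segment of $P_1$ or $P_2$ strictly between the feet of two consecutive rungs, say between $Q_\ell$ and $Q_{\ell+1}$ with $1\le\ell\le 2^c-1$.

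Next I would fold the ladder away from $w$. One of the intervals $\{Q_1,\dots,Q_\ell\}$, $\{Q_{\ell+1},\dots,Q_{2^c}\}$ contains at least $2^{c-1}$ rungs; taking the $2^{c-1}$ rungs at the outer end of that interval — that is, $Q_1,\dots,Q_{2^{c-1}}$ or $Q_{2^{c-1}+1},\dots,Q_{2^c}$ — together with the corresponding subpaths of $P_1$ and $P_2$ yields a subgraph $H'\subseteq H$ that is a subdivision of $L_{2^{c-1}}$ with the evident subdivision model, all of whose rungs still use color set $S$. Picking the outermost rungs guarantees $w\notin V(H')$, since $w$ sits strictly between the feet of $Q_\ell$ and $Q_{\ell+1}$. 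The restriction $\varphi|_{H'}$ is again a cycle centered coloring, it still satisfies the uniform-rung hypothesis, and it avoids $\gamma$; so by the induction hypothesis it uses more than $c-1$, i.e.\ at least $c$, colors, none of which is $\gamma$. Hence $\varphi$ uses at least $c+1$ colors on $H$, contradicting the assumption and completing the induction.

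The routine points I am glossing over are the verifications that $C$ (and the analogous cycle used inside $H'$) is really a cycle and that $H'$ carries a genuine subdivision model of $L_{2^{c-1}}$ — both immediate from the definitions. The step I expect to be the crux is the choice of the \emph{outer} cycle rather than an arbitrary one: only because $C$ already contains all of $P_1\cup P_2$ can ``$\gamma$ is unique on $C$'' be upgraded to ``$\gamma$ is unique in $H$'', and it is precisely this global uniqueness of $\gamma$, together with $\gamma\notin S$ (forced by the uniform-rung hypothesis), that lets the chosen half $H'$ be analyzed in isolation without the color $\gamma$ leaking back in.
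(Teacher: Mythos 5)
Your proof is correct and follows essentially the same route as the paper's: locate a vertex of globally unique color on the outer cycle $Q_1\cup P_2\cup Q_{2^c}\cup P_1$ (its color cannot lie in the common rung color set $S$), then pass to the half-ladder avoiding that vertex and apply induction. The only cosmetic difference is that you frame the step as a contradiction, whereas the assumption ``at most $c$ colors'' is never actually used — the argument proves the lower bound directly, as in the paper.
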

\begin{proof}
  We prove the lemma by induction on \(c\).
  It clearly holds true for \(c = 1\), so suppose that \(c \ge 2\) and the coloring \(\varphi\) on each path \(Q_i\) uses exactly the same set of colors, say \(A\).
  Since \(\varphi\) is a cycle centered coloring, there exists a vertex of unique color on the cycle \(P_1 \cup P_2 \cup Q_1 \cup Q_{2^c}\).
  Let \(x\) be such a vertex.
  We have \(\varphi(x) \not \in A\), because every color in \(A\) appears on both \(Q_1\) and \(Q_{2^c}\).
  Hence \(x\) lies either on \(P_1\) or on \(P_2\), and its color is unique in the whole graph \(H\).
  After possibly exchanging \(P_1\) and \(P_2\), we may assume that \(x \in V(P_1)\), and possibly reversing the order of the paths \(Q_1\), \dots, \(Q_{2^{c}}\), we may assume that \(x\) does not lie on the \(V(Q_1)\)--\(V(Q_{2^{c-1}})\) subpath of \(P_1\).
  The graph \(H\) contains a subgraph \(H_1\) isomorphic to a subdivision of \(L_{2^{c-1}}\) with subdivision model \((P_1', P_2'; Q_1, \dots, Q_{2^{c-1}})\), where \(P_1'\) is the \(V(Q_1)\)--\(V(Q_{2^{c-1}})\) subpath of \(P_1\) and \(P_2'\) is the \(V(Q_1)\)--\(V(Q_{2^{c-1}})\) subpath of \(P_2\).
  By induction hypothesis, \(\varphi\) uses more than \(c - 1\) colors on \(H_1\), and all these colors are distinct from \(\varphi(x)\).
  Hence \(\varphi\) uses more than \(c\) colors on \(H\).
\end{proof}

We may now prove Theorem~\ref{thm:graphs-without-long-ladders}, which we restate for convenience.

\TheoremGraphsWithoutLongLadders*

\begin{proof}
Let us first show the implication~\ref{itm:gwll-no-ladder}~\(\Rightarrow\)~\ref{itm:gwll-2-c-coloring}.
We prove the contrapositive.
Let \(\calC\) be a class of graphs such that for every integer \(m\geq 1\) there exists a graph \(G\) in \(\calC\) with \(\tdtwo(G) > m\).
We need to show that for every \(k \ge 1\) there exists a graph in \(\calC\) with an \(L_k\) minor.
Fix \(k \ge 1\) and let \(m = k({(k-1)}^2+2) + 1\).
Let \(G\) be a graph in \(\calC\) such that  \(\tdtwo(G) \ge m\).
By Lemma~\ref{lem:2-cc-blocks}, there exists a connected subgraph \(B\) of \(G\) which does not have a cutvertex such that \(\tdtwo(B) \ge m\).
Since \(m \ge 2\), the subgraph \(B\) has at least two vertices.
Let \(x_1\) and \(x_2\) be distinct vertices of \(B\).
By Lemma~\ref{lem:lk-or-rooted-lt} with \(t = k\), \(B\) has an \(L_k\) minor, and thus so does \(G\).
The implication~\ref{itm:gwll-no-ladder}~\(\Rightarrow\)~\ref{itm:gwll-2-c-coloring} follows.

The implication~\ref{itm:gwll-2-c-coloring}~\(\Rightarrow\)~\ref{itm:gwll-circ-coloring} is straightforward, since every \(2\)-connected centered coloring is a cycle centered coloring.

It remains to show the implication~\ref{itm:gwll-circ-coloring}~\(\Rightarrow\)~\ref{itm:gwll-no-ladder}.
We prove the contrapositive.
More precisely, we show that for every integer \(c \geq 1\), every graph \(G\) containing an \(L_{4^c}\) minor satisfies \(\chicir(G) > c\).
Thus, fix an integer \(c \geq 1\) and let \(G\) be a graph with an
\(L_{4^c}\) minor.
The graph \(G\) contains a subgraph isomorphic to a subdivision of \(L_{4^c}\) with a subdivision model \((P_1, P_2; Q_1, \dots, Q_{4^c})\).

Towards a contradiction, suppose that there is a cycle centered coloring of \(G\) which uses at most \(c\) colors.
For every \(i \in \set{1, \dots, 4^c}\), let \(A_i\) denote the set of colors used on the path \(Q_i\).
By the pigeonhole principle, for \(k = 2^c\), there are indices \(i_1\), \dots, \(i_k\) with \(1 \le i_1 < \dots < i_k \le 4^c\) such that \(A_{i_1} = \dots = A_{i_k}\).
Let \(P_1'\) be the \(V(Q_{i_1})\)--\(V(Q_{i_k})\) subpath of \(P_1\) and let \(P_2'\) be the \(V(Q_{i_1})\)--\(V(Q_{i_k})\) subpath of \(P_2\).
Then \(P_1' \cup P_2' \cup Q_{i_1} \cup \cdots \cup Q_{i_k}\)  is a subgraph isomorphic to a subdivision of \(L_{2^c}\) with a subdivision model \((P_1', P_2'; Q_{i_1}, \dots, Q_{i_k})\) such that the paths \(Q_{i_j}\) all use the same set of colors.
By Lemma~\ref{lem:uniform-model}, \(\varphi\) uses more than \(c\) colors on this subgraph, contradiction.
\end{proof}

\section{An application to poset dimension}
\label{sec:posets}

In this section we show Theorem~\ref{thm:posets-tdtwo}.
In a poset \(P = (X, \le_P)\), we consider the relation \(\le_P\) as a subset of \(X^2 = X \times X\). A linear order \(\le\) on \(X\) is a \emph{linear extension} of \(P\) if \({\le_P} \subseteq {\le}\).
For a set \(S \subseteq X\), we denote by \(P[S]\) the subposet of \(P\) induced by \(S\), that is \(P[S] = (S, {\le_P} \cap {S^2})\).

The next lemma is folklore, a proof is included for completeness.

\begin{lemma}\label{lem:dim-apex}
  Let \(d \ge 1\) be an integer and let \(\calC\) be a class of graphs closed under taking subgraphs such that every poset whose cover graph is in \(\calC\) has dimension at most \(d\).
  Let  \(P= (X, \le_P)\) be a poset with cover graph \(G\) such that \(G - z \in \calC\) for some vertex \(z\) of \(G\).
  Then \(\dim(P) \le 2d\).
\end{lemma}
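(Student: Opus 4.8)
The strategy is a standard "splitting along an apex" argument for poset dimension. Let $z$ be the vertex such that $G - z \in \calC$. Partition the ground set $X \setminus \{z\}$ into three parts according to the relationship with $z$: the set $D = \{x : x <_P z\}$ of elements below $z$, the set $U = \{x : x >_P z\}$ of elements above $z$, and the set $I = \{x : x \parallel z\}$ of elements incomparable to $z$ (plus $z$ itself, which we can place anywhere convenient). Define two subposets of $P$: let $P_1 = P[D \cup I \cup \{z\}]$ and $P_2 = P[U \cup I \cup \{z\}]$. The key structural observation is that the cover graph of each $P_i$ is a subgraph of $G - z$ together with possibly some edges at $z$ — more carefully, I would argue that the cover relations of $P_1$ not involving $z$ are cover relations of $P$ (so the cover graph of $P_1 - z$ is a subgraph of $G - z$), and similarly for $P_2$; this is where one must be slightly careful, since a cover in a subposet need not be a cover in the big poset. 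I would instead work with $P_1 - z = P[D \cup I]$ and $P_2 - z = P[U \cup I]$: if $y$ covers $x$ in $P[D \cup I]$, I claim $y$ covers $x$ in $P$, because any $w$ with $x <_P w <_P y$ satisfies $w <_P y \le_P$ something, and since $x \in D \cup I$ and $D \cup I$ is a down-set relative to $z$ in an appropriate sense... actually the cleanest route is: $D \cup I \cup \{z\}$ is the complement of the strict up-set of $z$, hence a down-set, and subposets induced by down-sets preserve covers. So the cover graph of $P_1$ is a subgraph of $G$, and in fact of $G$ with $z$ present; but we need it in $\calC$. Since $\calC$ is closed under subgraphs and $G - z \in \calC$, the cover graph of $P_1 - z$ is in $\calC$, so $\dim(P_1 - z) \le d$; likewise $\dim(P_2 - z) \le d$.

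Now I would assemble a realizer of $P$ of size $2d$ from realizers of $P_1 - z$ and $P_2 - z$, handling $z$ by hand. Take a realizer $\{\le_1^1, \dots, \le_d^1\}$ of $P_1 - z = P[D \cup I]$ and a realizer $\{\le_1^2, \dots, \le_d^2\}$ of $P_2 - z = P[U \cup I]$. Extend each $\le_j^1$ to a linear order $\sigma_j$ on all of $X$ by placing all of $U$ above everything in $D \cup I$, placing $z$ just below $U$ (i.e., above all of $D \cup I$, below all of $U$), and ordering $U$ arbitrarily but consistently (say by a fixed linear extension of $P[U]$); dually extend each $\le_j^2$ to $\tau_j$ by placing all of $D$ below everything, $z$ just above $D$, and ordering $D$ by a fixed linear extension of $P[D]$. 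I claim $\{\sigma_1,\dots,\sigma_d,\tau_1,\dots,\tau_d\}$ is a realizer of $P$. Each $\sigma_j$ and $\tau_j$ is a linear extension of $P$ — this needs a short check using that $D$ is a down-set and $U$ an up-set and that the base orders were linear extensions of the respective induced subposets. For the reversing condition: given an incomparable pair $(a,b)$ of $P$ with $a \not\le_P b$, I must exhibit one of the $2d$ orders putting $b$ before $a$. Split into cases by which parts $a$ and $b$ land in. If both lie in $D \cup I$ then they form an incomparable pair of $P[D\cup I]$ (incomparability is inherited), so some $\le_j^1$ reverses it and hence $\sigma_j$ does. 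Symmetrically if both lie in $U \cup I$, some $\tau_j$ works. The remaining cases involve $z$, or a pair with one element in $D \setminus I$ and the other in $U \setminus I$ — but an element strictly below $z$ and one strictly above $z$ are comparable, so that case is vacuous; and pairs involving $z$ are handled because $z$ is comparable to everything in $D \cup U$ and its position relative to $I$ can be checked directly (for $a \in I$, we have $z \parallel a$; in the $\sigma_j$'s $z$ sits above $a$ and in the $\tau_j$'s below $a$, giving both directions).

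The main obstacle, and the step I would be most careful about, is the claim that covers are preserved when passing to $P[D \cup I]$ and $P[U \cup I]$, i.e.\ that the cover graphs of these induced subposets are genuinely subgraphs of $G$ so that the hypothesis on $\calC$ applies; this relies precisely on $D \cup I \cup \{z\}$ being a down-set and $U \cup I \cup \{z\}$ an up-set of $P$, and I would state and use the folklore fact that the cover graph of a subposet induced by an up-set or down-set is a subgraph of the cover graph. The rest — verifying that the extended orders $\sigma_j,\tau_j$ are linear extensions and that every incomparable pair is reversed — is routine case analysis of the kind above. Note we never actually need to place $z$ into the subposet realizers; including $z$ in $P_1, P_2$ above was only for intuition, and the clean argument uses $P[D\cup I]$ and $P[U \cup I]$ directly, which is why the bound comes out as exactly $2d$ rather than something larger.
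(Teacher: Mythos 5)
Your proposal is correct and follows essentially the same route as the paper: split $X$ by the (weak) up-set and down-set of $z$, note the cover graphs of the two induced subposets lie in $G-z$ hence in $\calC$, and merge the two realizers of size $d$ by stacking the up-set of $z$ on top (resp.\ the down-set at the bottom). Your extra care about covers being preserved in down-set/up-set-induced subposets is exactly the point the paper dismisses as ``easy to observe,'' so there is no substantive difference.
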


\begin{proof}
  Let \(U = \{x \in X : x \ge_P z\}\) and let \(D = \{x \in X : x \le_P z\}\).
  It is easy to observe that the cover graphs of \(P[X \setminus U]\) and \(P[X \setminus D]\) are (induced) subgraphs of \(G - z\), and thus are in \(\calC\).
  Hence \(\dim(P[X \setminus U]) \le d\) and \(\dim(P[X \setminus D]) \le d\).
  Let \(\le_1, \dots, \le_d\) be a realizer of \(P[X \setminus U]\), and let \(\le_{d+1}, \dots, \le_{2d}\) be a realizer of \(P[X \setminus D]\).
  Finally, let \(\le_U\) be a linear extension of \(P[U]\) and let \(\le_D\) be a linear extension of \(P[D]\).

  We construct a realizer \(\le'_1, \dots, \le'_{2d}\) of \(P\) as follows:
  \[
  {\le'_i} =
  \begin{cases}
    {\le_i} \cup {(X \setminus U) \times U} \cup {\le_U}&\text{for \(i \in \{1, \dots, d\}\),}\\
    {\le_D} \cup {D \times (X \setminus D)} \cup {\le_i}&\text{for \(i \in \{d+1, \dots, 2d\}\).}
  \end{cases}
  \]
  Now it remains to show that \(\le'_1, \dots, \le'_{2d}\) is a realizer of \(P\).
  It is straightforward to verify that each \(\le'_i\) is a linear extension of \(\le_P\), so if \(x \le_P y\), then \(x \le'_i y\) for \(i \in \{1, \dots, 2d\}\).
  It remains to show that if \(x\) and \(y\) are incomparable in \(P\), then there exists \(i \in \{1, \dots, 2d\}\) such that \(y <'_i x\).
  If \(\{x, y\} \subseteq X \setminus U\), then there exists \(i \in \{1, \dots, d\}\) such that \(y <_i x\) and thus \(y <'_i x\).
  Similarly, if \(\{x, y\} \subseteq X \setminus D\), then there exists \(i \in \{d+1, \dots, 2d\}\) such that \(y <_i x\), so \(y <'_{i} x\).
  Hence we are left with the case when \(\{x, y\} \cap U \neq \emptyset\) and \(\{x, y\} \cap D \neq \emptyset\).
  Since \(D \times U \subseteq {\le_{P}}\) and the elements \(x\) and \(y\) are incomparable in \(P\), this implies one of the elements \(x\) or \(y\) is equal to \(z\).
  If \(x = z\), then \(y \not \in U\), so  \((y, x) \in (X \setminus U) \times U \subseteq <'_{1}\).
  Similarly, if \(y = z\), then \(x \not \in D\), so \((y, x) \in D \times (X \setminus D) \subseteq <'_{d+1}\).
  Therefore \(\le'_1, \dots, \le'_{2d}\) is indeed a realizer of \(P\).
\end{proof}

We will also need the following theorem.
Recall that a \emph{block} of a graph \(G\) is a maximal connected subgraph of \(G\) without a cutvertex.
The blocks can be of three types: maximal \(2\)-connected subgraphs, cut edges together with their endpoints, and isolated vertices.
Two blocks have at most one vertex in common, and such a vertex is always a cutvertex.

\begin{theorem}[Trotter, Walczak and Wang~\cite{trotter-walczak-wang}]
\label{thm:dim-cutvertices}
  Let \(d \ge 1\) be an integer and let \(\calC\) be a class of graphs such that every poset whose cover graph is in \(\calC\) has dimension at most \(d\).
  If \(P\) is a poset such that all blocks of its cover graph are in \(\calC\), then \(\dim(P) \le d+2\).
\end{theorem}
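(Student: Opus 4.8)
The plan is to root the block-cut tree of the cover graph $G$ and to build a realizer of $P$ of size $d+2$: $d$ linear extensions obtained by stitching together realizers of the subposets induced by the individual blocks, plus two extra ``global'' linear extensions that take care of the incomparable pairs whose two elements lie in different blocks. The first step is to check that for every block $B$ of $G$ the induced subposet $P[V(B)]$ has cover graph exactly $B$. Indeed, a saturated chain of $P$ joining two vertices of $B$ is a path of $G$ joining two vertices of $B$, and such a path cannot leave $B$: a maximal subpath with all interior vertices outside $V(B)$ would together with $B$ form a connected subgraph of $G$ without a cutvertex that strictly contains $B$, and since it contains an edge of $B$ it must lie in the block $B$, a contradiction. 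Hence a cover of $P[V(B)]$ is a cover of $P$, and conversely every edge of $B$ is an edge of the cover graph $G$ and so a cover of $P$ inside $V(B)$; as $B\in\calC$, this gives $\dim(P[V(B)])\le d$ for every block $B$.

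The second step analyses comparabilities that cross a cutvertex. Fix a root of the block-cut tree, and for each vertex $x$ let $B(x)$ be the block containing $x$ that is closest to the root; the vertex sets so ``owned'' by the blocks partition $V(G)$. If $x<_P y$ and $B(x)\ne B(y)$, then writing $c_1,\dots,c_\ell$ for the cutvertices met, in this order, along the path from $B(x)$ to $B(y)$ in the block-cut tree, the saturated chain witnessing $x<_P y$ is a path of $G$ forced to pass through $c_1,\dots,c_\ell$ in this order, so $x\le_P c_1\le_P\dots\le_P c_\ell\le_P y$. Thus the comparability of $x$ and $y$ is mediated by these cutvertices, and its direction is determined block by block by the position of each endpoint relative to the relevant cutvertex of its own block.

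The third step is the actual construction. The aim is, for each $i\in\{1,\dots,d\}$, to produce a linear extension $\sigma_i$ of $P$ whose restriction to $V(B)$ agrees with a prescribed realizer $\rho^B_i$ of $P[V(B)]$ for every block $B$, so that incomparable pairs with both endpoints owned by a common block are reversed among $\sigma_1,\dots,\sigma_d$; and then to add two linear extensions $\tau_1,\tau_2$ that reverse every incomparable pair whose endpoints are owned by distinct blocks. Informally $\tau_1$ should order such a pair in accordance with a fixed depth-first order of the block-cut tree, refined by the positions of the endpoints relative to the mediating cutvertex, and $\tau_2$ should be its mirror image; if this can be carried out, then $\sigma_1,\dots,\sigma_d,\tau_1,\tau_2$ is a realizer of $P$ of size $d+2$. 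An equivalent and arguably cleaner organisation is an induction on the number of blocks: deleting a leaf block $B$ with cutvertex $v$ leaves a poset $P'$ on the remaining vertices whose cover graph again has all of its blocks in $\calC$, and $P$ is recovered by amalgamating $P'$ and $P[V(B)]$ over the single common element $v$, with every comparability of $P$ between $V(B)\setminus\{v\}$ and the ground set of $P'$ passing through $v$; the whole theorem then reduces to showing that such an amalgam has dimension at most $\max\{\dim(P'),\ \dim(P[V(B)])+2\}$, which is exactly where the constant ``$+2$'' gets absorbed once and for all.

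The hard part is making this assembly work. The prescribed per-block realizers need not be jointly extendible to linear orders of the whole poset, so one cannot naively concatenate them along the block-cut tree; and the two global extensions $\tau_1,\tau_2$ must be designed so that every cross-block incomparable pair --- which may be ``decided'' at a cutvertex arbitrarily deep in the tree --- is separated the right way by at least one of them without violating any comparability anywhere. Controlling this globally, and in particular keeping the overhead down to the additive constant $2$ (rather than a quantity growing with the number of blocks, or a multiplicative blow-up), is where the real content lies; in the argument of Trotter, Walczak and Wang it is handled through an Erd\H{o}s--Szekeres / Ramsey-type selection that forces a monotone pattern on the relevant cutvertex data and thereby lets the same two extensions serve for the entire block-cut tree.
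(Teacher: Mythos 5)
The paper does not prove this statement at all---it is quoted verbatim as a known theorem of Trotter, Walczak and Wang, with a citation standing in for the proof---so the only question is whether your argument is complete on its own, and it is not. Your first two steps are fine: the observation that every saturated chain of $P$ between two vertices of a block $B$ stays inside $B$ (else one could attach an ear to $B$ and contradict its maximality), so that the cover graph of $P[V(B)]$ is exactly $B$ and hence $\dim(P[V(B)])\le d$; and the observation that comparabilities between vertices owned by different blocks are mediated by the cutvertices on the block-cut-tree path. But these are the routine preliminaries. The theorem itself is the assertion that one can combine per-block realizers into $d$ linear extensions of the whole poset and then dispose of \emph{all} cross-block incomparable pairs with only two further linear extensions, and your text never constructs these extensions: you state the goal, note that the per-block realizers ``need not be jointly extendible'' and that the two global orders must reverse pairs decided at arbitrarily deep cutvertices, concede that ``this is where the real content lies,'' and then defer to ``an Erd\H{o}s--Szekeres / Ramsey-type selection'' in the original paper. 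Invoking the cited authors' method without reproducing it is not a proof.

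The same applies to your alternative organisation by induction on leaf blocks: the inductive step is exactly the unproved claim that a one-point amalgam $P$ of $P'$ and $P[V(B)]$ over a cutvertex $v$ (with all cross comparabilities through $v$) satisfies $\dim(P)\le\max\{\dim(P'),\,\dim(P[V(B)])+2\}$. This inequality is asserted, not established, and it is essentially the whole difficulty of the theorem in disguise: one must show how to reverse the incomparable pairs $(x,y)$ with $x\in V(B)\setminus\{v\}$ and $y$ outside $B$ inside the existing extensions of $P'$ plus a bounded surplus, and nothing in your sketch explains how the ``$+2$'' is achieved rather than an overhead growing with the number of blocks. So the proposal is a plausible plan whose key step---the actual construction of the realizer of size $d+2$, or equivalently the amalgamation lemma---is missing.
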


We are now ready to prove Theorem~\ref{thm:posets-tdtwo}, which we restate first.

\TheoremPosetsTDTwo*

\begin{proof}
Let us show the following slightly stronger statement, which will help the induction go through:
If the cover graph \(G\) of a poset \(P\) satisfies \(\tdtwo(G) \le m\), then \(\dim(P) \le 2^{m+1} - 2\); furthermore, \(\dim(P) \le 2^{m+1} - 4\) if $G$ is $2$-connected.
We prove the statement by induction on $m$.

For the base case ($m=1$), $G$ has no edges, and thus $P$ is an \emph{antichain},
that is, a poset in which no pair of distinct elements is comparable.
Hence $\dim(P) \leq 2$;
indeed, if the elements of \(P\) are \(x_1\), \ldots, \(x_n\), then
\(P\) has a realizer \(\{{\le}, {\le'}\}\) where
\(x_1 < \cdots < x_n\) and \(x_n <' \cdots <' x_1\).
As \(2^{m+1}-2 = 2\), the statement holds.
(Note that in this case $G$ cannot be $2$-connected, so the second part of the statement holds vacuously.)

For the inductive case ($m \geq 2$), we first establish the case where $G$ is $2$-connected.
Consider a \(2\)-connected centered coloring of $G$ with $m$ colors.
There is a vertex $z$ of $G$ whose color is unique in this coloring.
Thus \(\tdtwo(G-z) \le m-1\).
By induction and Lemma~\ref{lem:dim-apex}, we deduce that $\dim(P) \leq 2 \cdot (2^{m-1 +1} - 2)= 2^{m+1} - 4$, as desired.

Now we turn to the case that $G$ is not $2$-connected.
Then each block of $G$ is either $2$-connected, or isomorphic to $K_1$ or $K_2$.
Using that our claim holds in the $2$-connected case, and the obvious fact that a poset whose cover graph is isomorphic to $K_1$ or $K_2$ has dimension $1\leq 2^{m+1} - 4$, we deduce from Theorem~\ref{thm:dim-cutvertices} that \(\dim(P) \le (2^{m+1} - 4) + 2 = 2^{m+1} - 2 \).
\end{proof}

\section{Bumping a ladder}
\label{sec:bumping_ladders}

In this section, we prove Theorem~\ref{thm:bumping-a-ladder}.

In a graph \(G\), if \(A\) is the set of cutvertices,
and \(\zB\) is the set of blocks of \(G\), then the \emph{block graph} of \(G\) is the bipartite graph on \(A \cup \zB\), where \(a \in A\) is adjacent to \(B \in \zB\) if \(a \in V(B)\).
The block graph of a graph is always a forest.

\begin{lemma}\label{lem:many-blocks}
  Let \(m\) and \(p\) be positive integers.
  Let \(G\) be a graph with at least \(p^m\) vertices and with \(\tdtwo(G) \le m\).
  Then there exists a set \(Z \subseteq V(G)\) with \(|Z| \le m - 1\) such that \(G - Z\) has at least \(p\) blocks.
\end{lemma}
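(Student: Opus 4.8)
The plan is to induct on $m$. For the base case $m=1$, a graph with $\tdtwo(G)\le 1$ has no edges, so every vertex is an isolated block; with $|V(G)|\ge p^1=p$ we get at least $p$ blocks by taking $Z=\emptyset$. For the inductive step, suppose $m\ge 2$ and $|V(G)|\ge p^m$. If $G$ has at least $p$ blocks already, take $Z=\emptyset$ and we are done. Otherwise $G$ has at most $p-1$ blocks. Since the blocks cover $V(G)$ and pairwise share at most one (cut)vertex, some block $B$ has at least $|V(G)|/(p-1)>p^{m-1}$ vertices. Now I would pick a vertex $z$ of $B$ to delete: first, if $B$ is $2$-connected, use the $2$-connected centered coloring of $B$ (restricted from one of $G$, or rather: apply $\tdtwo(B)\le\tdtwo(G)\le m$, which holds since $B$ is a subgraph — here I use that $\tdtwo$ is monotone under subgraphs, which follows from restricting a coloring) to find a vertex $z\in V(B)$ whose color is unique, so that $\tdtwo(B-z)\le m-1$.

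The key point is that after deleting $z$, the graph $B-z$ is still connected (as $B$ is $2$-connected) and has more than $p^{m-1}-1$, hence at least $p^{m-1}$, vertices, and $\tdtwo(B-z)\le m-1$. By the induction hypothesis applied to $B-z$ with the same parameter $p$, there is a set $Z'\subseteq V(B-z)$ with $|Z'|\le m-2$ such that $(B-z)-Z'$ has at least $p$ blocks. Then set $Z=Z'\cup\{z\}$, so $|Z|\le m-1$. It remains to argue that $G-Z$ also has at least $p$ blocks: since $(B-z)-Z' = B-Z$ is a subgraph of $G-Z$, and the blocks of a subgraph behave well — more precisely, if a graph $H'$ is a subgraph of $H$ and $H'$ has at least $p$ blocks, then so does $H$. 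This last fact needs a short justification: distinct blocks of $H'$ lie in distinct blocks of $H$, because two edges (or a vertex) lying in a common block of $H$ would already lie in a common block of $H'$ if they are in $H'$ — one should phrase this via the equivalence relation on edges "lying on a common cycle or being a shared edge", which only gets coarser when passing to a supergraph, wait, it actually only gets coarser in the supergraph, so the number of blocks can only drop; hence I should instead argue directly that the blocks of $G-Z$ restricted to $V(B-z)\setminus Z'$ refine into the blocks of $B-Z$, giving at least as many blocks meeting $V(B)$ — let me just count blocks of $B-Z$ that are themselves blocks of $G-Z$. Actually the cleanest route: every block of $B-Z$ is contained in a block of $G-Z$, and I claim two distinct blocks $C_1,C_2$ of $B-Z$ lie in distinct blocks of $G-Z$ unless $B-Z$ is connected at that shared cutvertex in a way that — hmm.

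Let me reorganize the final counting step, since that is the main obstacle. The robust statement to prove and use is: \emph{if $W\subseteq V(G)$ induces a connected subgraph $G[W]$ and $G[W]$ has $b$ blocks, then $G$ has at least $b$ blocks} — false in general (a connected $G[W]$ with many blocks can sit inside a $2$-connected $G$ with one block). So monotonicity fails and I must use more structure. The fix: in the inductive step, instead of passing to the block $B$ abstractly, keep track that $B-Z$ is an \emph{induced} subgraph and that the "extra" vertices of $G-Z$ outside $B$ attach to $B-Z$ only through the at most $(p-1)$ cutvertices of $G$ lying in $B$ — but that still does not prevent merging. The genuinely correct approach, which I would adopt, is to \emph{not} reduce to a single block but to delete a well-chosen single vertex $z$ from $G$ itself (a vertex whose color is unique in a $2$-connected centered coloring of $G$, guaranteed since $\tdtwo(G)\le m$ and $|V(G)|\ge 2$), obtain $\tdtwo(G-z)\le m-1$, and apply induction to $G-z$ — but $G-z$ may be disconnected and have too few vertices in each component. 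So one applies induction to the component $G_0$ of $G-z$ with the most vertices: $|V(G_0)|\ge (|V(G)|-1)/(\text{number of components})$, which is not obviously $\ge p^{m-1}$ unless the number of components is small. This is exactly where I expect the real work to be, and the author's proof presumably handles it by a cleverer accounting — likely by observing that if $G-z$ has many components then $G$ already has many blocks (each component of $G-z$ gives rise to blocks through $z$), so in either case we win. I would therefore split on whether $G-z$ has at least $p$ components: if yes, $z$ together with (an edge from each component to) each component yields at least $p$ blocks of $G$ meeting $z$, done with $Z=\emptyset$; if no, the largest component $G_0$ of $G-z$ has $\ge (p^m-1)/p > p^{m-1}-1$, so $\ge p^{m-1}$ vertices, and $\tdtwo(G_0)\le\tdtwo(G-z)\le m-1$; apply induction to get $Z_0$ with $|Z_0|\le m-2$ and $G_0-Z_0$ having $\ge p$ blocks; put $Z=Z_0\cup\{z\}$, and since $G_0-Z_0$ is a \emph{connected component} of $G-Z$ (as $z\in Z$ isolates it from the rest), its blocks are genuinely blocks of $G-Z$, giving the required $\ge p$ blocks. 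The crucial structural fact making the counting valid is that blocks of a connected component of a graph are blocks of the whole graph, which is immediate.
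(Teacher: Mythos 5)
Your proposal ends up resting on a false claim. A $2$-connected centered coloring only guarantees a vertex of unique color inside every connected subgraph \emph{without a cutvertex}; it guarantees nothing about $G$ as a whole. So the pivotal step of your reorganized argument --- ``pick $z\in V(G)$ whose color is unique in $G$, guaranteed since $\tdtwo(G)\le m$, hence $\tdtwo(G-z)\le m-1$'' --- is unjustified, and such a $z$ need not exist. For a concrete counterexample, let $G$ be two disjoint fans (a long path plus an apex adjacent to all path vertices). Each fan is $2$-connected and contains a triangle, so $\tdtwo(G)=3$; coloring each apex $1$ and each path alternately $2,3$ is a valid $3$-coloring, and in fact \emph{no} $2$-connected centered coloring of $G$ with $3$ colors can have a globally unique color, since then the other fan would be colored with only $2$ colors, which is impossible. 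Here $G$ has arbitrarily many vertices, only $2$ blocks (so your ``done already'' case does not apply once $p\ge 3$), and your inductive step cannot even begin. The rest of your restructured argument (the split on the number of components of $G-z$, and the fact that blocks of a union of components of $G-Z$ are blocks of $G-Z$) is fine, but it all hangs on a $z$ that may not exist.

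Ironically, your first, abandoned attempt is the paper's proof, and you gave it up exactly where the real work lies. The paper picks a block $B_0$ of $G$ with at least $p^{m-1}$ vertices, takes $x\in V(B_0)$ whose color is unique \emph{in $B_0$} (this is what the definition does deliver, since $B_0$ has no cutvertex), so $\tdtwo(B_0-x)\le m-1$; induction on $B_0-x$ gives $Y$ with $|Y|\le m-2$, and one sets $Z=Y\cup\{x\}$. The merging you feared is then excluded by a short argument: every block of $B_0-Z$ lies in some block of $G-Z$, and no block $B$ of $G-Z$ can contain two distinct blocks of $B_0-Z$. Indeed, such a $B$ would have at least two vertices in $B_0$; since $B$ is connected without a cutvertex it lies inside some block of $G$, and two blocks of $G$ share at most one vertex, so that block is $B_0$ itself; hence $B\subseteq B_0-Z$, so $B$ is a block of $B_0-Z$ containing two distinct blocks of $B_0-Z$, a contradiction. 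This is the missing counting step: the crucial point is that $B_0$ is a \emph{block of $G$}, not an arbitrary subgraph, which is precisely the structure your general monotonicity objection ignores.
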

\begin{proof}
  We prove the lemma by induction on \(m\).
  If \(m = 1\), then the lemma works with \(Z = \emptyset\):
  since \(\tdtwo(G) \le 1\),
  every vertex in $G$ is an isolated vertex forming a block.

  Now suppose that \(m \ge 2\).
  Fix a \(2\)-connected centered coloring of \(G\) using at most \(m\) colors.
  If \(G\) has at least \(p\) blocks, then the lemma holds with \(Z = \emptyset\).
  Let us suppose that \(G\) has less than \(p\) blocks.
  Then some block of \(G\) has at least \(p^{m-1}\) vertices.
  Let \(B_0\) be such a block, and let \(x\) be a vertex of unique color in \(B_0\).
  Thus, \(\tdtwo(B_0 - x) \le m - 1\).
  By induction hypothesis, there exists a vertex subset \(Y\) in \(B_0 - x\) with \(|Y| \le m - 2\) such that \((B_0 - x) - Y\) has at least \(p\) blocks.
  Let \(Z = Y \cup \set{x}\).
  It remains to show that \(G - Z\) has at least \(p\) blocks.
  The graph \((B_0 - x) - Y = B_0 - Z\) is a subgraph of \(G - Z\), so every block of \(B_0 - Z\) is contained in a block of \(G - Z\).
  Since \(B_0 - Z\) has at least \(p\) blocks, it suffices to show that no block of \(G - Z\) contains two blocks of \(B_0 - Z\).

  Towards a contradiction, suppose that a block \(B\) of \(G - Z\) contains two distinct blocks \(B_1\) and \(B_2\) of \(B_0 - Z\).
  In particular, we have \(|V(B) \cap V(B_0)| \ge |V(B_1) \cup V(B_2)| \ge 2\).
  Since two blocks of \(G\) have at most one vertex in common, this implies that \(B_0\) is the (unique) block of \(G\) containing the block \(B\) of \(G - Z\).
  But \(B \subset B_0 \cap (G - Z) = B_0 - Z\), so \(B\) is a block of \(B_0 - Z\), so \(B_1 \subseteq B\) and \(B_2 \subseteq B\) imply \(B_1 = B = B_2\), contradiction.
  This concludes the proof.
\end{proof}

\begin{observation}\label{obs:critical-contraction}
  If \(G\) is a \(3\)-connected graph on at least \(5\) vertices and \(e\) is an edge in \(G\) such that \(G/e\) is not \(3\)-connected, then \(G\) admits a nontrivial separation \((A_1, A_2)\) of order \(3\) such that both endpoints of \(e\) lie in \(A_1 \cap A_2\).
\end{observation}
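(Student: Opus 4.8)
The plan is to contract \(e\), locate a low-order separation in the resulting graph, and lift it back to \(G\). Write \(e = uv\) and let \(w\) denote the vertex of \(G/e\) obtained by identifying \(u\) and \(v\), so that \(V(G/e) = (V(G) \setminus \{u,v\}) \cup \{w\}\). Since \(G\) has at least \(5\) vertices, \(G/e\) has at least \(4\) vertices; as \(G/e\) is not \(3\)-connected, by definition it admits a nontrivial separation \((B_1, B_2)\) of order at most \(2\). Put \(S = B_1 \cap B_2\).

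Next I would define the lift: for \(i \in \{1,2\}\), set \(A_i = (B_i \setminus \{w\}) \cup \{u,v\}\) if \(w \in B_i\), and \(A_i = B_i\) otherwise. The first task is to check that \((A_1, A_2)\) is a separation of \(G\). The equality \(A_1 \cup A_2 = V(G)\) is immediate from \(B_1 \cup B_2 = V(G/e)\). For the edge condition: an edge of \(G\) avoiding both \(u\) and \(v\) is an edge of \(G/e\) avoiding \(w\), hence lies inside \(B_1\) or inside \(B_2\), and therefore inside \(A_1\) or inside \(A_2\); the edge \(e\) itself has both endpoints in whichever \(A_i\) contains \(w\); and any other edge incident with \(u\) or \(v\), say \(ux\), corresponds to an edge \(wx\) of \(G/e\), so \(x\) lies in the same \(B_i\) as \(w\), and then \(x\), \(u\), \(v\) all lie in the corresponding \(A_i\). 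One then computes \(A_1 \cap A_2 = S\) when \(w \notin S\), and \(A_1 \cap A_2 = (S \setminus \{w\}) \cup \{u,v\}\), of size \(|S| + 1\), when \(w \in S\).

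Finally I would invoke the \(3\)-connectivity of \(G\). The separation \((A_1, A_2)\) is nontrivial: if \(A_i = V(G)\) then, using \(V(G) = (V(G/e) \setminus \{w\}) \cup \{u,v\}\) and \(u,v \notin V(G/e)\), we would be forced to have \(B_i = V(G/e)\), contradicting the nontriviality of \((B_1,B_2)\). Hence \((A_1,A_2)\) is a nontrivial separation of \(G\), so its order is at least \(3\). But its order equals \(|S| \le 2\) when \(w \notin S\), which is impossible, and equals \(|S| + 1 \le 3\) when \(w \in S\); therefore \(w \in S\) and \(|S| = 2\), so \((A_1, A_2)\) has order exactly \(3\), and \(u, v \in A_1 \cap A_2\) by construction, which is exactly the desired conclusion.

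I expect the only mildly delicate point to be the verification that the lifted pair \((A_1, A_2)\) is genuinely a separation — the small case analysis for edges at \(u\) and \(v\) — together with confirming nontriviality; everything else is a direct dictionary between \(G\) and \(G/e\), and the \(3\)-connectivity hypothesis is used only once, to rule out orders smaller than \(3\).
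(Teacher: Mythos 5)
Your proof is correct: the paper states this observation without proof (it is treated as standard), and your argument is precisely the intended one — lift a nontrivial separation \((B_1,B_2)\) of order at most \(2\) of \(G/e\) back to \(G\) by replacing the contracted vertex \(w\) with \(\{u,v\}\), check the lifted pair is a nontrivial separation, and use \(3\)-connectivity of \(G\) to force \(w \in B_1 \cap B_2\) with \(|B_1 \cap B_2| = 2\), giving a separator \((B_1 \cap B_2 \setminus \{w\}) \cup \{u,v\}\) of size exactly \(3\) containing both endpoints of \(e\). All the details you flag as delicate (the edge-lifting case analysis and the nontriviality check) are handled correctly under the paper's definitions of separation, order, and \(k\)-connectivity.
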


The following lemma is due to Halin~\cite{H69a}.
Since we are not aware of a published proof in English\footnote{We note however that the lemma appears as an exercise in Diestel's textbook~\cite[Chapter 3, exercise 10]{Diestel5thEdition}.}, we include one for the reader's convenience.

\begin{lemma}[Halin~\cite{H69a}]\label{lem:preserving3conn}
Let \(G\) be a 3-connected graph.  Let \(e \in E(G)\). If neither \(G/e\) nor \(G - e\) is 3-connected, then some endpoint of \(e\) has degree \(3\).
\end{lemma}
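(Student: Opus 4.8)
The plan is to argue by contradiction: assume both endpoints $x$ and $y$ of $e=xy$ have degree at least $4$, and derive that at least one of $G/e$ or $G-e$ is $3$-connected. Since $G$ has minimum degree at least $3$ and both endpoints of $e$ have degree at least $4$, $G$ has at least $5$ vertices, so Observation~\ref{obs:critical-contraction} applies to $G/e$, and the standard characterization of $3$-connected minus an edge applies to $G-e$. Concretely, if $G/e$ is not $3$-connected, then by Observation~\ref{obs:critical-contraction} there is a nontrivial $3$-separation $(A_1,A_2)$ of $G$ with $x,y\in A_1\cap A_2$; write $A_1\cap A_2=\{x,y,v\}$ for some third vertex $v$. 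If $G-e$ is not $3$-connected, then since $G$ is $3$-connected and deleting an edge drops connectivity by at most one, $G-e$ has a $2$-separation $(B_1,B_2)$ with $|B_1\cap B_2|=2$, and because $(B_1,B_2)$ fails to be a separation of $G$ the edge $e$ must run between $B_1\setminus B_2$ and $B_2\setminus B_1$; so, after relabeling, $x\in B_1\setminus B_2$, $y\in B_2\setminus B_1$, and $B_1\cap B_2=\{u,w\}$ is a $2$-element set separating $x$ from $y$ in $G-e$.

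The heart of the argument is to combine these two small separations into a contradiction with $3$-connectivity. First I would record the size constraints: since $\deg(x)\ge 4$, the side $A_1\setminus A_2$ together with $x$ must contain at least one more vertex — in fact each of $A_1\setminus A_2$ and $A_2\setminus A_1$ is nonempty (nontriviality) and, using that $x$ and $y$ each have degree $\ge 4$ while only $\{x,y,v\}$ lies on the cut, one shows neither side is "too small". The key observation is that $\{u,w\}$ separates $x$ from $y$ in $G-e$, hence $v\notin\{u,w\}$ would force one of $x,y$ — say through $v$ — to be reachable from the other avoiding $\{u,w\}$ via the $A_i$'s, which we must rule out; more cleanly, I would intersect the two separations. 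The four sets $A_1\setminus A_2$, $A_2\setminus A_1$, $B_1\setminus B_2$, $B_2\setminus B_1$ partition $V(G)\setminus(\{x,y,v\}\cup\{u,w\})$ into pieces, and a $3$-connected graph cannot be cut by the at-most-$3$-element sets obtained by intersecting one $A_i$ with the separator $\{u,w\}$ plus boundary vertices. The cleanest route: consider $S=(A_1\cap A_2)\cup(B_1\cap B_2)=\{x,y,v,u,w\}$, a set of size at most $5$; removing $\{x,y\}$ from it and using both separations shows $G-\{x,y\}$ is disconnected by a set of size $\le 1$ (namely $\{v\}$ or a subset of $\{u,w\}$ depending on which side things land), contradicting that $G-\{x,y\}$ is $1$-connected (as $G$ is $3$-connected with $\ge 5$ vertices).

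More precisely, I would do the case analysis on where $u$ and $w$ sit relative to $(A_1,A_2)$. Because $\{u,w\}$ separates $x$ from $y$ in $G-e$, and any $x$–$y$ path in $G-e$ either stays inside $A_1$ or inside $A_2$ or crosses through $v$, one argues that $v$, together with at most one of $u,w$, already separates $x$ from $y$ in $G-e$; but then adding back the edge $e$, the set $\{v\}\cup(\{u,w\}\cap A_i)$ has size at most $2$ and separates $G$ (after also noting $x,y$ are on the cut of one side), contradicting $3$-connectivity. Handling the degenerate sub-cases where $v\in\{u,w\}$ or where a side of one separation is a single vertex requires using the degree-$\ge 4$ hypothesis on $x$ and $y$ to guarantee enough edges leave $x$ (resp. $y$) to the "far" side, so that no $2$-element set can separate them. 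I expect this case analysis — keeping track of which of $u,w,v$ coincide and which sides are trivial — to be the main obstacle; the right bookkeeping device is to fix attention on $G-\{x,y\}$ and show the two given separations exhibit a cutvertex (or empty cut) in it, which is the clean contradiction since $G$ being $3$-connected forces $G-\{x,y\}$ to be connected.
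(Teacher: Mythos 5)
Your setup coincides with the paper's: a nontrivial $2$-separation $(B_1,B_2)$ of $G-e$ which $e$ must cross, and, via Observation~\ref{obs:critical-contraction}, a nontrivial $3$-separation $(A_1,A_2)$ of $G$ with both ends of $e$ in $A_1\cap A_2$. The gap is in the step that combines them. Your central claim --- that $v$ together with at most one of $u,w$ separates $x$ from $y$ in $G-e$, and that the resulting set of size at most $2$ then separates $G$ --- is not a consequence of the facts available at that point, and it fails in exactly the configuration that actually arises. Test it on the prism (triangles $a_1a_2a_3$ and $b_1b_2b_3$ plus the matching $a_ib_i$) with $e=a_1a_2$: this satisfies every hypothesis your derivation of that claim uses (it only violates the degree-$\ge 4$ assumption, which that derivation does not invoke). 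Here one may take $A_1\cap A_2=\{a_1,a_2,b_3\}$, so $v=b_3$, and $B_1\cap B_2=\{a_3,b_1\}$, so $u=a_3\in A_1\setminus A_2$ and $w=b_1\in A_2\setminus A_1$; then neither $\{v,u\}$ nor $\{v,w\}$ separates $a_1$ from $a_2$ in $G-e$ (the paths $a_1b_1b_2a_2$ and $a_1a_3a_2$ avoid them). Note that $u,w$ on opposite strict sides of $(A_1,A_2)$ is precisely what the overlay of the two separations forces, so this case cannot be waved away. Two further problems: even if some $2$-set did separate $x$ from $y$ in $G-e$, that would not contradict $3$-connectivity of $G$, because $e$ re-joins $x$ and $y$ --- you must exhibit a nontrivial small-order separation of $G$ itself; and your ``cleanest route'' conclusion, a cutvertex of $G-\{x,y\}$, is no contradiction either, since it only exhibits a $3$-cut of $G$, which $3$-connected graphs certainly may have.

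What is missing is the corner argument that the paper runs: for $i,j\in\{1,2\}$ the pairs $(A_i\cap B_j,\; A_{3-i}\cup B_{3-j})$ are separations of $G$, and bounding their orders shows first that the $2$-cut $\{u,w\}$ of $G-e$ must meet both $A_1\setminus A_2$ and $A_2\setminus A_1$ (otherwise some corner is a nontrivial separation of $G$ of order at most $2$), and then, after placing $v$ on one side of $(B_1,B_2)$, that two of the corners have order exactly $2$, hence are trivial, which forces one side of $(B_1,B_2)$ to be exactly $\{x,u,w\}$ (or $\{y,u,w\}$). That is where the degree-$3$ endpoint comes from, and under your assumption $\deg(x),\deg(y)\ge 4$ it is the contradiction. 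In particular the degree hypothesis is not merely for ``degenerate sub-cases'': the entire content of the proof is to show, via these corner separations, that the degenerate configuration (one side of the $2$-separation being a single vertex plus the cut) is the only possible one. As written, your plan substitutes for this an incorrect separator claim, so it does not go through.
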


\begin{proof}
Suppose that \(G/e\) and \(G - e\) are not $3$-connected.
As a \(3\)-connected graph, \(G\) has at least \(4\) vertices.
If \(|V(G)| = 4\), then \(G\) is complete and the lemma holds as
all vertices are of degree \(3\).
Hence we assume that \(|V(G)| \ge 5\).

Since \(G - e\) is not \(3\)-connected and has at least \(5\)
vertices, we can fix a nontrivial \(2\)-separation $(A_1,A_2)$ of $G-e$.
As a \(3\)-connected graph, \(G\) does not admit a nontrivial \(2\)-separation,
so neither \(A_1\) nor \(A_2\) contains both endpoints of \(e\).
Hence \(e\) has an endpoint \(v_1\) in \(A_1 \setminus A_2\) and an endpoint \(v_2\) in \(A_2 \setminus A_1\).

By Obervation~\ref{obs:critical-contraction}, we can fix
a nontrivial order-\(3\) separation \((B_1, B_2)\) of \(G\)
with \(\{v_1, v_2\} \subseteq B_1 \cap B_2\).
Let \(w\) denote the vertex of \(B_1 \cap B_2\) other than \(v_1\) and \(v_2\).
As \((A_1, A_2)\) is a separation of \(G - v_1 v_2\) and \((B_1, B_2)\) is a
separation of \(G\) with \(\set{v_1, v_2} \subseteq B_1 \cap B_2\), the pair
\((A_i \cap B_j, A_{3-i} \cup B_{3-j})\) is a separation of \(G\) for \(i, j \in \set{1, 2}\).

We claim that $(A_1 \cap A_2) \setminus B_1 \neq \emptyset$ and
\((A_1 \cap A_2) \setminus B_2 \neq \emptyset\).
Towards a contradiction, suppose that it is not the case.
After possibly swapping \(B_1\) and \(B_2\) we may assume that
\begin{equation}\label{eq:inclusion}
A_1 \cap A_2 \subseteq B_2.
\end{equation}
Consider the separations \((A_1 \cap B_1, A_2 \cup B_2)\) and
\((A_2 \cap B_1, A_1 \cup B_2)\) of \(G\).
The order of those separations is at most \(2\) as by~\eqref{eq:inclusion},
for \(i \in \set{1, 2}\) we have
\begin{align*}
(A_i \cap B_1) \cap (A_{3-i} \cup B_2)
&= (A_i \cap B_1 \cap A_{3-i}) \cup (A_i \cap B_1 \cap B_2)\\
&\subseteq (A_i \cap B_1 \cap B_2) = \set{v_i, w}
\end{align*}

Since \(G\) is \(3\)-connected, both separations \((A_1 \cap B_1, A_2 \cup B_2)\)
and \((A_2 \cap B_1, A_1 \cup B_2)\) must be trivial.
However, by nontriviality of \((B_1, B_2)\), \(B_1\) is a proper subset of
\(V(G)\), and thus \(A_1 \cap B_1\) and \(A_2 \cap B_1\) are proper subsets of \(V(G)\) as well.
Therefore it must be the case that \(A_2 \cup B_2 = A_1 \cup B_2 = V(G)\), and
thus by~\eqref{eq:inclusion}
\[V(G) = (A_2 \cup B_2) \cap (A_1 \cup B_2) = (A_1 \cap A_2) \cup B_2 = B_2,\]
contradicting the nontriviality of \((B_1, B_2)\).
This contradiction proves our claim that $(A_1 \cap A_2) \setminus B_1 \neq \emptyset$ and
\((A_1 \cap A_2) \setminus B_2 \neq \emptyset\).

The order of the separation \((A_1, A_2)\) is at most \(2\), so the fact that
$(A_1 \cap A_2) \setminus B_1 \neq \emptyset$ and
\((A_1 \cap A_2) \setminus B_2 \neq \emptyset\)
implies that \(A_1 \cap A_2 = \set{y_1, y_2}\) for some two vertices
\(y_1 \in B_1 \setminus B_2\) and \(y_2 \in B_2 \setminus B_1\).

Since \(w \not \in A_1 \cap A_2\), either \(w \in A_1 \setminus A_2\) or \(w \in A_2 \setminus A_1\).
The two cases are symmetric, so let us assume that \(w \in A_2 \setminus A_1\).
Summarizing, we have \(v_i \in B_1 \cap B_2 \cap A_i \setminus A_{3-i}\) and
\(y_i \in A_1 \cap A_2 \cap B_i \setminus B_{3-i}\) for \(i \in \set{1, 2}\),
\(w \in A_2 \setminus A_1\), and \(A_1 \cap A_2 \cap B_1 \cap B_2\) is empty.
Hence \((A_1 \cap B_1, A_2 \cup B_2)\)
and \((A_1 \cap B_2, A_2 \cup B_1)\) are separations of \(G\) with
\[(A_1 \cap B_1) \cap (A_2 \cup B_2) = \set{v_1, y_1}\quad\text{and}\quad
(A_1 \cap B_2) \cap (A_2 \cup B_1) = \set{v_1, y_2}.\]
The \(3\)-connectedness of \(G\) implies that the two separations are trivial
and thus \(A_1 = \set{v_1, y_1, y_2}\).
As \((A_1, A_2)\) is a separation of \(G - e\), this implies that \(v_1\) can be
adjacent only to the vertices \(y_1\), \(y_2\) and \(v_2\), so the degree of
\(v_1\) is at most \(3\).
Since \(G\) is \(3\)-connected, the degree of \(v_1\) is at least \(3\), so
the vertex \(z = v_1\) satisfies the lemma.
\end{proof}

The next result demonstrates the usefulness of vertices of degree three when attempting to preserve 3-connectivity while taking minors.  The result is also due to Halin~\cite{H69b}; see \cite{AES87} for an alternate proof in English.

\begin{lemma}[Halin~\cite{H69b}]
\label{lem:Halin}
Let \( G \) be a 3-connected graph on at least \(5\) vertices and let \( v \in V(G) \) be a vertex of degree \(3\).  Then there exists an edge \( e\) incident to \( v\) such that \( G/e \) is 3-connected.
\end{lemma}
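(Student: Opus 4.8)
The plan is to argue by contradiction. Suppose $\deg(v)=3$, write $N(v)=\{a,b,c\}$, and assume that none of $G/va$, $G/vb$, $G/vc$ is $3$-connected. Since $|V(G)|\ge 5$, Observation~\ref{obs:critical-contraction} applies to each of the edges $va,vb,vc$ and yields, for each $x\in\{a,b,c\}$, a nontrivial order-$3$ separation $(A_x,B_x)$ of $G$ with $v,x\in A_x\cap B_x$; writing $A_x\cap B_x=\{v,x,w_x\}$, this means $G-\{v,x,w_x\}$ is disconnected.

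The first step is to extract a rigid structural picture. Since $G$ is $3$-connected, in any cutset of size $3$ every vertex of the cutset has a neighbour in every component of the remainder (otherwise the other two cutset vertices would form a nontrivial $2$-separation of $G$). Applied to $\{v,x,w_x\}$: every component of $G-\{v,x,w_x\}$ contains a neighbour of $v$; as $\deg(v)=3$ and $x$ is one of them, only two neighbours of $v$ remain, so $G-\{v,x,w_x\}$ has \emph{exactly two} components, containing the two elements of $N(v)\setminus\{x\}$ respectively, and $w_x\notin N(v)$. Moreover $w_a,w_b,w_c$ are pairwise distinct: e.g.\ $w_a=w_c$ would force $w_a$ to be a cutvertex of $G-v$ (a $b$--$a$ path in the connected graph $G-v-w_a$ would have to meet $c$, while a $b$--$c$ path would have to meet $a$, a contradiction), contradicting $2$-connectivity of $G-v$.

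Now choose, over all three neighbours of $v$ and all such separations, one of minimum smaller-side size; say it is $(A,B)$ for the neighbour $a$, with separator $\{v,a,a'\}$ and $|B\setminus A|\le|A\setminus B|$, relabelled so that $b\in A\setminus B$ and $c\in B\setminus A$. Let $(C,D)$ be a corresponding separation for $c$, with separator $\{v,c,c'\}$; by the above this separator meets $\{v,a,a'\}$ only in $v$, and we may relabel $C,D$ so that $a\in C\setminus D$ and $b\in D\setminus C$. The crux is that $(A\cup C,B\cap D)$ and $(A\cup D,B\cap C)$ are separations whose separators both contain $v$ and $c$, while their small sides $(B\setminus A)\cap(D\setminus C)$ and $(B\setminus A)\cap(C\setminus D)$ are \emph{proper} subsets of $B\setminus A$ (each misses $c$, as $c\in C\cap D$) and in fact are the smaller sides of those separations. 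Hence if either of these two separations is nontrivial of order $\le 3$, it contradicts the minimal choice. Otherwise each is trivial or of order $\ge 4$; applying submodularity of the separation-order function to the crossing pairs $\{(A\cap C,B\cup D),(A\cup C,B\cap D)\}$ and $\{(A\cap D,B\cup C),(A\cup D,B\cap C)\}$ together with $3$-connectivity, each degenerate possibility forces one of the four corners $(A\setminus B)\cap(C\setminus D)$, $(A\setminus B)\cap(D\setminus C)$, $(B\setminus A)\cap(C\setminus D)$, $(B\setminus A)\cap(D\setminus C)$ to be empty. But $b$ lies in $(A\setminus B)\cap(D\setminus C)$, $c$ lies in $(B\setminus A)\cap(C\cap D)$, and $c'$ lies in $A\setminus B$ or in $B\setminus A$; tracking these memberships — and, if needed, bringing in the separation for the neighbour $b$ as well — the surviving emptiness relations become incompatible, which is the desired contradiction.

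The main obstacle is precisely this final crossing-separations bookkeeping: ruling out the degenerate cases where a crossing separation is trivial or of order $\ge 4$ (where a ``side'' may collapse to one or two vertices, and $c'$ may sit on either side), keeping exact track of which of the four corners each of $v,a,b,c,a',c'$ occupies, and making sure the new separation produced is strictly smaller on its small side. An alternative that may streamline the endgame is to push the minimality further and show the minimum smaller side is a single vertex $z$: then $N(z)$ is contained in the corresponding three-element cutset, so $z$ is a neighbour of $v$ of degree exactly $3$, pinning the local structure around $v$ so tightly (two adjacent degree-$3$ vertices sharing a common triangle) that one can contradict $3$-connectivity directly or re-apply the minimal choice.
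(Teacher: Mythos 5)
The paper does not actually prove Lemma~\ref{lem:Halin}; it is quoted from Halin with a pointer to \cite{AES87} for a published proof, so your attempt has to be judged on its own. Your preliminary steps are correct and follow the standard route: applying Observation~\ref{obs:critical-contraction} to each of $va,vb,vc$; noting that in a $3$-connected graph every vertex of a $3$-cutset has a neighbour in every component of the complement, so that each $G-\{v,x,w_x\}$ has exactly two components, each containing one of the other two neighbours of $v$, whence $w_x\notin N(v)$; the pairwise distinctness of $w_a,w_b,w_c$; and the choice of a minimum fragment to be crossed with a second separation.

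The problem is the endgame, which you yourself flag as ``the main obstacle'': you assert that after the submodularity step ``the surviving emptiness relations become incompatible'' without exhibiting the contradiction, and that is exactly where the substance of the proof lies. Concretely, with $F_2=B\setminus A$ the minimum fragment (containing $c$) and $(C,D)$ the separation for $c$ with separator $\{v,c,c'\}$, the argument closes easily only in the favourable case where $F_2$ meets the component of $G-\{v,c,c'\}$ containing $a$: then the two opposite corners $(A\setminus B)\cap(D\setminus C)\ni b$ and $(B\setminus A)\cap(C\setminus D)$ are both nonempty, submodularity plus $3$-connectivity forces the crossing separation with separator $\{v,a,c\}$ to have order exactly $3$, and its fragment inside $F_2\setminus\{c\}$ contradicts minimality. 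When instead $F_2\subseteq\{c,c'\}\cup(D\setminus C)$, the corner you want may be empty, the relevant crossing separation can have order $4$ depending on where $a'$ and $c'$ sit (so $3$-connectivity gives you nothing), and the corner contained in $A\setminus B$ lies in the \emph{large} fragment, so even when it is nonempty it does not beat the minimum; the small cases $|F_2|\in\{1,2\}$ also need separate treatment. None of this follows from the relations you have written down. Your proposed shortcut is likewise only a plan: if $F_2=\{c\}$ then $N(c)=\{v,a,a'\}$ and $v,c$ are adjacent degree-$3$ vertices spanning a triangle with $a$, but this configuration does occur in $3$-connected graphs (adjacent rim vertices of the wheel $W_4$ together with the hub, or the endpoints of a matching edge of the triangular prism), so it cannot be ``contradicted directly''; one must still identify which edge at $v$ is contractible. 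As it stands, the proposal is a correct strategy with its decisive step missing.
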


\begin{lemma}\label{lem:general-fan}
  Let \(G\) be a \(3\)-connected graph, let \(P\) be an induced path in \(G\), and let \(Z \subseteq V(G) \setminus V(P)\) be such that every internal vertex of \(P\) has all its neighbors in \(V(P) \cup Z\).
  If the length of \(P\) is at least \(2|Z| + 3\), then there exists an edge \(e \in E(P)\) such that \(G/e\) is \(3\)-connected.
\end{lemma}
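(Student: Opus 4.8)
The idea is to use Lemma~\ref{lem:preserving3conn} (Halin) to control which edges of $P$ can be ``bad'' — meaning that contracting them destroys $3$-connectivity — and then count. Call an edge $e$ \emph{contractible} if $G/e$ is $3$-connected. First I would observe that $G$ has at least $5$ vertices: indeed $P$ has length $\ge 2|Z|+3 \ge 3$, so $P$ alone contributes at least $4$ vertices, and if $|Z|\ge 1$ we are done; if $|Z|=0$ then $P$ has length $\ge 3$, giving $\ge 4$ vertices, but then every internal vertex of $P$ has all neighbors on $P$, forcing $V(G)=V(P)$ and $G$ to be a path or cycle, which is not $3$-connected unless $|V(G)|\le 3$ — a contradiction — so in fact $|Z|\ge 1$ and $|V(G)|\ge 5$. (I would double-check this edge case; it may be cleaner to note that a $3$-connected graph with an induced path of length $\ge 3$ must already have $\ge 5$ vertices since $K_4$ has no induced $P_4$.)

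Next, the key step: suppose for contradiction that \emph{no} edge of $P$ is contractible. Consider the internal edges of $P$, i.e. edges $e=uv$ with both $u,v$ internal vertices of $P$; there are at least $(2|Z|+3)-2 = 2|Z|+1$ of these. For such an edge $e=uv$, both endpoints have all neighbors in $V(P)\cup Z$, and on the path $P$ each of $u,v$ has exactly two neighbors. If $G-e$ were $3$-connected we could instead try to contract, but the cleaner route is: by Lemma~\ref{lem:preserving3conn}, since $G/e$ is not $3$-connected, either $G-e$ is not $3$-connected or some endpoint of $e$ has degree $3$. I would handle these two cases. If some endpoint, say $u$, has degree $3$: then by Lemma~\ref{lem:Halin} there is an edge $e'$ incident to $u$ with $G/e'$ $3$-connected; but all three neighbors of $u$ lie in $V(P)\cup Z$, and two of them are its path-neighbors on $P$ — if $e'$ is one of those path edges we contradict our assumption, so the contractible edge $e'$ must join $u$ to its unique neighbor in $Z$. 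This ``charges'' the degree-$3$ internal vertex $u$ to a distinct element of $Z$ (distinct because each such $u$ uses up one $Z$-vertex as its unique off-path neighbor, and two internal vertices of $P$ sharing the same $Z$-neighbor... here I need to be a little careful — the argument should show at most $|Z|$ internal vertices can be of degree $3$ with their third neighbor in $Z$, perhaps by a more global separation argument rather than naive charging).

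For the remaining internal edges $e=uv$ — those where neither endpoint has degree $3$ — Lemma~\ref{lem:preserving3conn} forces $G-e$ to be not $3$-connected, so $G-e$ has a nontrivial $2$-separation $(A_1,A_2)$, and since $G$ is $3$-connected this separation must ``use'' the edge $e$, i.e. $u\in A_1\setminus A_2$, $v\in A_2\setminus A_1$, and $|A_1\cap A_2|=2$. The plan is to argue that $A_1\cap A_2 \subseteq Z$ (or close to it): a vertex of $A_1\cap A_2$ separating $u$ from $v$ in $G-e$ cannot be an internal vertex of $P$ far from $u,v$ since $P$ minus one edge still connects most of $P$; carefully, the two-vertex cut $\{a_1,a_2\}$ together with $e$ disconnects $G$, and because the subpath $P$ gives a long ``rigid'' path whose internal vertices only see $V(P)\cup Z$, at least one of $a_1,a_2$ must lie in $Z$ (and the other is constrained to be on $P$ near the edge). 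Then I would set up the counting: each of the $\ge 2|Z|+1$ internal edges is either ``charged'' to a degree-$3$ internal vertex (at most, roughly, $|Z|$ of these, via the $Z$-neighbor bookkeeping) or produces a $2$-separation whose cut meets $Z$; and distinct such separations along the path can be ordered so that at most $|Z|$ of them can be ``nested'' around distinct $Z$-vertices — this is where an Erdős–Szekeres-free but genuinely combinatorial argument is needed, essentially: walking along $P$, each time the separating pair changes it must be because we pass the unique ``$P$-side'' vertex of a cut, and the $Z$-side vertex of each cut is one of the $\le|Z|$ vertices of $Z$, so at most $|Z|$ consecutive blocks of internal edges can arise, contradicting $2|Z|+1 > 2|Z|$ after accounting for the degree-$3$ vertices too.

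\textbf{The main obstacle.} The delicate part is the last step: converting ``every internal edge is bad'' into a clean count bounded by a linear function of $|Z|$. The subtlety is that a single $2$-separation $(A_1,A_2)$ with $A_1\cap A_2=\{z,w\}$ ($z\in Z$, $w\in V(P)$) can be ``bad'' for several consecutive edges of $P$ simultaneously, so I cannot naively charge one edge to one $Z$-vertex; conversely I must rule out $|Z|$ cuts each killing only one edge plus overlaps conspiring to cover all $2|Z|+1$ internal edges. I expect the right framing is to consider, for each internal edge $e_i=v_iv_{i+1}$ of $P$, a witnessing $2$-cut, pick the one whose $P$-side vertex $w_i$ is as far along $P$ as possible, and show the map $e_i \mapsto (\text{the }Z\text{-vertex of that cut})$ is such that each $Z$-vertex is the image of at most two edges; this, with the $\le |Z|$ degree-$3$ vertices contributing at most $|Z|$ more, would give at most $2|Z|+|Z|$... so the constant $2|Z|+3$ suggests instead that degree-$3$ internal vertices and $2$-separation edges are mutually exclusive in a way that yields exactly $\le 2|Z|+2$ bad internal edges, short of $2|Z|+3 - 2 = 2|Z|+1$ — I would need to recheck the exact bookkeeping, but the shape of the argument is: long induced path with bounded ``external attachment'' $Z$ forces a contractible edge, by Halin plus a separation-counting pigeonhole on $Z$.
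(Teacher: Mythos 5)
Your proposal is a plan rather than a proof, and the step you yourself flag as ``the main obstacle'' --- converting ``every internal edge of $P$ is bad'' into a count bounded by a linear function of $|Z|$ --- is exactly the content of the lemma and is not carried out. Worse, the partial bookkeeping you sketch does not survive scrutiny. Your charging of degree-$3$ internal vertices to elements of $Z$ (``at most $|Z|$ internal vertices can be of degree $3$ with their third neighbor in $Z$'') is false: with $|Z|=1$ and every internal vertex of $P$ adjacent to the single vertex $z\in Z$ (a wheel-like configuration), \emph{all} $\ell-1$ internal vertices have degree $3$ and the same $Z$-neighbor. In that situation Lemma~\ref{lem:Halin} applied at an internal vertex $u$ may legitimately return the spoke $uz$ as its contractible edge, which tells you nothing about the edges of $P$; you notice this possibility but have no mechanism to rule it out at every internal vertex simultaneously. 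Similarly, the claim that the $2$-separations of the various $G-e$ can be ``ordered along $P$'' so that at most $|Z|$ of them occur is asserted, not proved, and you concede that the arithmetic ($2|Z|+|Z|$ versus the available $2|Z|+1$ internal edges) does not close.

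The paper's proof avoids this global contradiction-and-count scheme entirely, and uses neither Lemma~\ref{lem:preserving3conn} nor Lemma~\ref{lem:Halin}. It locates one specific good edge directly: writing $P=v_0v_1\cdots v_\ell$, it sets $X_i=\bigcup_{j<i}N(v_j)\cap Z$ and $Y_i=\bigcup_{j>i}N(v_j)\cap Z$; since the $X_i$ are nested increasing and the $Y_i$ nested decreasing, each changes at most $|Z|$ times, so $\ell-2>2|Z|$ forces an index $i$ with $(X_i,Y_i)=(X_{i+1},Y_{i+1})$, i.e.\ every $Z$-neighbor of $v_i$ is already seen to the left and every $Z$-neighbor of $v_{i+1}$ to the right. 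For $e=v_iv_{i+1}$ one then shows $G/e$ is $3$-connected: a putative $3$-separation from Observation~\ref{obs:critical-contraction} with $v_i,v_{i+1}$ and one extra vertex $u$ in the cut leaves one of the subpaths $v_0Pv_{i-1}$, $v_{i+2}Pv_\ell$ untouched by the cut, yet the stabilized attachment sets force that subpath to contain vertices on both sides of the separation --- a contradiction. You would need to either supply a correct version of your counting argument (which seems genuinely harder than the lemma itself) or switch to an argument of this direct type.
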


\begin{proof}
By the \(3\)-connectivity of $G$, every internal vertex of $P$ has a neighbor in $Z$.
Hence, when we fix $k=|Z|$ and $\ell$ to be the length of $P$,
we have $k \geq 1$ and $\ell \geq 2k+3 \geq 5$.

For each \(v \in V(G)\), let \(N(v)\) denote the set of neighbors of \(v\) in \(G\).
Let $v_0, v_1, \ldots, v_\ell$ be the vertices of $P$ in the order in which they appear on the path.
For each $i \in \{1, \dots, \ell-1\}$,
let $X_i = \bigcup_{j<i} N(v_{j}) \cap Z$ and let $Y_i = \bigcup_{j>i} N(v_{j}) \cap Z$.
Since $X_1\subseteq \cdots \subseteq X_{\ell-1}$ and $|X_{\ell-1}| \le |Z| = k$,
there exist at most $k$ indices $i$ with $1 \le i \le \ell-2$ such that $X_i \neq X_{i+1}$. Symmetrically, there are at most $k$ indices $i$ with $1 \le i \le \ell-2$ such that $Y_i \neq Y_{i+1}$. Since $\ell-2 > 2k$, there exists $i$ such that $1 \le i \le \ell-2$ and $(X_i, Y_i) = (X_{i+1}, Y_{i+1})$.
So $N(v_i) \cap Z \subseteq N(\{v_0, \dots, v_{i-1}\})$ and $N(v_{i+1}) \cap Z \subseteq N(\{v_{i+2}, \dots, v_{\ell}\})$.

Let $e$ be the edge $v_iv_{i+1}$ of $P$. We claim $G/e$ is 3-connected.
Arguing by contradiction, suppose that $G/e$ is not $3$-connected.
By Observation~\ref{obs:critical-contraction} there is a nontrivial separation $(A_1, A_2)$ of $G$ of order $3$
with $v_i,v_{i+1} \in A_1 \cap A_2$.
Let $u\notin \{v_i,v_{i+1}\}$ denote the third vertex of $A_1 \cap A_2$.
Clearly, we can assume that $u$ does not lie on $v_0Pv_{i-1}$ or that $u$ does not lie on $v_{i+2}Pv_{\ell}$.
We will continue the proof assuming the former; the other case has a symmetric proof.

Lest $(A_1 \setminus \set{v_i}, A_2)$ form a \(2\)-separation in $G$, there exists a vertex $a \in A_1 \setminus A_2$ such that $a$ is a neighbor of $v_i$ in $G$.  Since $v_i$ is an internal vertex of $P$ we have $a \in Z$ or $a = v_{i-1}$.
If $a \in Z$, then since $a \in X_{i+1} = X_i$, there exists an
edge of $G$ joining $a$ to a vertex $v_{j_1}$ with $j_1 < i$, and since \(a \in A_1 \setminus A_2\), we have \(v_{j_1} \in A_1\).
If \(a = v_{i-1}\), then for \(j_1 = i-1\) the vertex \(v_{j_1}\)
lies in \(A_1\).
Thus, in either case, there exists $j_1 < i$ such that $v_{j_1} \in A_1$.

Swapping \(A_1\) and \(A_2\) in the above reasoning, there must be a vertex \(v_{j_2}\) with $j_2 < i$,
such that $v_{j_2} \in A_2$. Therefore $v_{j_1} P v_{j_2}$ intersects \(A_1 \cap A_2\), which contradicts that
\(v_0 P v_{i-1}\) has no vertices in \(A_1 \cap A_2\).  We conclude that $G/e$ is 3-connected, as claimed.
\end{proof}

The following observation will be used in the proof of Lemma~\ref{lem:rooted-ladder-from-ladder}.

\begin{observation}\label{obs:ladder-model-in-subdivision}
Let \(k\) be an integer with \(k \ge 2\), let \(H\) be a graph isomorphic to a subdivision of \(L_k\), and let \(\phi\) be an \(L_k\)-model in \(H\).
For any two distinct vertices \(x\) and \(y\) of \(L_k\),
\begin{enumerate}
\item if \(x\) and \(y\) are adjacent in \(L_k\), then there is exactly one edge between \(\phi(x)\) and \(\phi(y)\) in \(H\), and
\item if \(x\) and \(y\) are nonadjacent in \(L_k\), then there are no edges between \(\phi(x)\) and \(\phi(y)\) in \(H\).
\end{enumerate}
\end{observation}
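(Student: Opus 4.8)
The plan is to argue that in a subdivision $H$ of $L_k$, an $L_k$-model $\phi$ must essentially coincide (up to the subdivision vertices) with the canonical model in which each branch vertex of $H$ is its own bag. First I would fix an $L_k$-model $\phi$ in $H$ and count edges. Since $H$ is a subdivision of $L_k$, it has exactly $k$ vertices of degree~$3$ (the internal rungs) plus $4$ vertices of degree~$2$ coming from the corners --- wait, more precisely $2k - 2$ edges of $L_k$ get subdivided into paths, and $H$ has exactly $|E(L_k)| = 3k - 2$ "original" edges distributed among these paths. The key numerical fact is that $H$ has only $3k-2$ edges more than it has vertices minus one, i.e. $|E(H)| - |V(H)| + c = 3k - 2 - (2k - 2) = k$, where $c=1$; equivalently $H$ has cycle rank $k$ while $L_k$ also has cycle rank $(3k-2) - 2k + 1 = k$. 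So the model $\phi$ and $H$ have the same cycle rank.

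Next I would use this to pin down $\phi$. Each bag $\phi(v)$ is a connected subgraph, hence a subtree-plus-some-edges; between adjacent bags there is at least one edge. Contracting each $\phi(v)$ to a point and deleting vertices of $H$ outside $\bigcup_v \phi(v)$ yields a minor model, so the number of edges of $H$ "used" (inside bags or between bags) is at least $|V(H')| - 1 + (\text{number of inter-bag edges} - (|V(L_k)|-1))\ge \ldots$; the honest way to say it: the subgraph of $H$ consisting of spanning trees of all the bags together with one chosen edge per edge of $L_k$ has cycle rank $\ge k$ only if every bag is a tree and there is exactly one edge between adjacent bags and no other edges of $H$ run between two bags. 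Since $H$ itself has cycle rank exactly $k$, equality is forced: every bag $\phi(v)$ induces a tree in $H$ (no "wasted" cycles), between adjacent $x,y$ there is exactly one edge of $H$, and between nonadjacent $x,y$ there are no edges of $H$. That is precisely conclusions~(i) and~(ii). (The hypothesis $k\ge 2$ just ensures $L_k$ has a pair of nonadjacent vertices, so that (ii) is non-vacuous; one should double-check the degenerate counting at $k=2$ by hand, but it is immediate.)

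The main obstacle is making the "equality in the cycle-rank count forces rigidity" step fully rigorous without hand-waving. Concretely: let $F$ be the subgraph of $H$ obtained by taking, for each $v\in V(L_k)$, a spanning tree $T_v$ of $\phi(v)$, and for each edge $xy\in E(L_k)$, one edge $e_{xy}$ of $H$ between $\phi(x)$ and $\phi(y)$. Then $F$ is connected and spans $\bigcup_v V(\phi(v))$, with $|E(F)| = \sum_v(|V(\phi(v))|-1) + (3k-2)$, so the cycle rank of $F$ is $(3k-2) - (\,|\bigcup_v V(\phi(v))| - 1\,) + (\text{number of components of }\bigcup_v V(\phi(v))\text{ minus }\ldots)$ --- cleaner: $F$ is connected, so its cycle rank is $|E(F)| - |V(F)| + 1 = (3k-2) - (\text{number of branch-plus-subdivision vertices inside the bags}) + 1 \le (3k-2) - (\text{number of branch vertices}) +1$, and since $H$ has $2k$ branch vertices this is $\le k$, with equality iff the bags contain no subdivision vertices beyond what is needed and $F=H$. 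Then $H\supseteq F$ and cycle rank of $H$ is exactly $k$ forces $F = H$ as a spanning subgraph on the same vertex set, hence $\bigcup_v V(\phi(v)) = V(H)$, each $\phi(v)=T_v$ is a tree, and $F=H$ means there is no edge of $H$ outside the $T_v$'s and the chosen $e_{xy}$'s --- giving both (i) uniqueness of the adjacent-bag edge and (ii) absence of nonadjacent-bag edges. I would present this counting carefully as the one real content of the proof, and dispatch everything else as bookkeeping.
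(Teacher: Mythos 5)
Your route (a cycle-rank count) is genuinely different from the paper's, which simply observes that if some inter-bag edge \(e\) were superfluous then \(\phi\) would be an \(L_k\)-model in \(H-e\), so a \emph{proper} subgraph of \(H\) would contain a subdivision of \(L_k\), impossible because \(L_k\) has no vertices of degree \(1\). A corrected version of your count does work, but as written there is a real gap at exactly the step you yourself flag as the crux. The subgraph \(F\) (spanning trees of the bags plus one chosen edge per edge of \(L_k\)) is connected with cycle rank exactly \((3k-2)-2k+1=k-1\) --- note the bag-vertex counts cancel, so your displayed formula involving ``branch-plus-subdivision vertices inside the bags'' is not the right quantity, and the common cycle rank of \(L_k\), \(H\) and \(F\) is \(k-1\), not \(k\). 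The unproved claim is then: \(F\subseteq H\), both connected, with equal cycle rank, forces \(F=H\). This is false for general graphs (take \(H\) a path of length \(2\) and \(F\) a single edge of it: both have cycle rank \(0\)); equality of cycle ranks only says that the number of edges of \(H\) missing from \(F\) equals the number of missing vertices, which a priori could happen if some vertices of \(H\) lie in no bag. To rule this out you must invoke the property that \(H\), being a subdivision of \(L_k\), has minimum degree \(2\): if \(S=V(H)\setminus V(F)\neq\emptyset\) with \(|S|=d\), then the degree sum over \(S\) is at least \(2d\) and connectivity of \(H\) forces at least one edge from \(S\) to \(V(F)\), so at least \(d+1\) edges of \(H\) are missing from \(F\), making the cycle rank of \(H\) strictly larger --- and if \(V(F)=V(H)\) any missing edge already does so. You never use the no-degree-one property, which is precisely the fact carrying the proof (it is also the one-line justification in the paper's argument), so the ``equality forces rigidity'' step is unsupported as it stands.

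Smaller slips, harmless but worth fixing: \(L_k\) has \(2k-4\) vertices of degree \(3\) (not \(k\)), a subdivision may subdivide any subset of the edges (not necessarily exactly the \(2k-2\) you name), and once the rigidity step is repaired your argument in fact yields slightly more than the observation states (each bag is a tree and the bags cover \(V(H)\)), which is fine.
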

\begin{proof}
  If this is not true, then there exists an edge \(e\) in \(H\) such that \(\phi\) is an \(L_k\)-model in \(H - e\).
  Hence there exists a proper subgraph of \(H\) which is isomorphic to a subdivision of \(L_k\), which is impossible since \(L_k\) has no vertices of degree \(1\).
\end{proof}

\begin{lemma}\label{lem:rooted-ladder-from-ladder}
  Let \(k\) be an integer with \(k \ge 2\), let \(H\) be a graph isomorphic to a subdivision of \(L_k\), and
  let \(\phi\) be an \(L_k\)-model in \(H\).
  If  \(z_1\) and \(z_2\) are distinct vertices of \(H\) with
  \(z_1 \in V(\phi((i_1, j_1)))\), \(z_2 \in V(\phi((i_2, j_2)))\) and \(1 \le j_1 \le j_2 \le k\), then there exists an \(L_{j_1}\)-model rooted at \((z_1, z_2)\) and an \(L_{k-j_2+1}\)-model rooted at \((z_1, z_2)\).
\end{lemma}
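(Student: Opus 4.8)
Recall that $H$ is a subdivision of $L_k$ with an $L_k$-model $\phi$, and $z_1 \in V(\phi((i_1,j_1)))$, $z_2 \in V(\phi((i_2,j_2)))$ with $1 \le j_1 \le j_2 \le k$. We want to extract an $L_{j_1}$-model rooted at $(z_1,z_2)$ and (by a symmetric argument after relabelling the columns $1,\dots,k$ in reverse order) an $L_{k-j_2+1}$-model rooted at $(z_1,z_2)$. So it suffices to prove the first statement.

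**Plan.** The idea is to "fold" the ladder at columns $j_1$ and $j_2$ so that $z_1$ and $z_2$ become the two ends of the last rung. Consider the first $j_1$ columns of $L_k$, which form an $L_{j_1}$ whose last rung is $\{(1,j_1),(2,j_1)\}$. Inside $H$, the branch sets $\{\phi((i,j)) : i \in \{1,2\}, j \le j_1\}$ together with the $H$-edges between consecutive branch sets already give an $L_{j_1}$-model, but it is rooted at a vertex of $\phi((1,j_1))$ and a vertex of $\phi((2,j_1))$ — not necessarily at $z_1,z_2$. Two things must be fixed: (a) we need $z_1$ specifically in the branch set for $(1,j_1)$ (which holds if $i_1$ is declared to be $1$, after possibly swapping the two rows), and more seriously (b) we need $z_2$ to also lie in the last-rung branch set $(2,j_1)$, whereas a priori $z_2 \in \phi((i_2,j_2))$ with $j_2$ possibly much larger than $j_1$.

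**Handling $z_2$.** The fix for (b) is to absorb the "tail" of the ladder beyond column $j_1$ into the branch set of one endpoint of the last rung. Concretely, after swapping the two rows of $L_k$ if necessary we may assume $i_1 = 1$, so $z_1 \in V(\phi((1,j_1)))$. Now I distinguish whether $i_2 = 1$ or $i_2 = 2$. The cleaner route: use Observation~\ref{obs:ladder-model-in-subdivision} to understand exactly how $H$ decomposes. The subgraph of $H$ consisting of all $\phi((i,j))$ with $j \ge j_1$, together with the unique connecting edges, is connected (it is a subdivision of the sub-ladder $L_{k-j_1+1}$), and it meets the "column-$j_1$ rung" $\phi((1,j_1)) \cup \phi((2,j_1))$ exactly in those two branch sets. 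Starting from the $L_{j_1}$-model on columns $1,\dots,j_1$, I want to re-define the branch set assigned to $(2,j_1)$ to be enlarged so that it contains $z_2$, while keeping all branch sets connected, pairwise disjoint, and with the required adjacencies. If $z_2 \in \phi((1,j))$ or $\phi((2,j))$ for some $j > j_1$: take the branch set of $(2,j_1)$ to be $\phi((2,j_1))$ together with a connected subgraph of the tail that reaches $z_2$ — specifically the union of a path through the tail-ladder from $\phi((2,j_1))$ to the branch set containing $z_2$, plus enough of that branch set to include $z_2$. One must check this new branch set is disjoint from $\phi((1,j_1))$ (true, since the tail branch sets are disjoint from $\phi((1,j_1))$ by Observation~\ref{obs:ladder-model-in-subdivision}, as $(1,j_1)$ is nonadjacent to every $(i,j)$ with $j > j_1+1$ and its only tail-neighbour $(1,j_1+1)$ contributes exactly one edge which we simply do not use) and from each $\phi((1,j'))$, $\phi((2,j'))$ with $j' < j_1$ (again by disjointness of $\phi$). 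Similarly if $j_2 = j_1$ but $i_2 = 2$, we just keep $\phi((2,j_1))$ and need not touch anything; and if $j_2 = j_1$ with $i_2 = 1$, then $z_1,z_2 \in \phi((1,j_1))$, contradicting disjointness unless — wait, they could be distinct vertices in the same branch set. In that degenerate case $i_1 = i_2 = 1$: here I instead take $z_1$ inside the $(1,j_1)$-set as before but reassign one of them to the $(2,j_1)$-set is impossible, so I should rather note this forces $j_1 \ge 1$ and handle it by re-rooting within $\phi((1,j_1))$ combined with the column $j_1$ rung — actually the honest move is to observe $i_1 \ne i_2$ can be assumed only when $j_1 < j_2$ or by symmetry; when $j_1 = j_2$ and $i_1 = i_2$, shift attention to column $j_1 - 1$ if $j_1 \ge 2$, or note $j_1 = 1$ makes the claim trivial. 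I expect the write-up to split into these cases.

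**Main obstacle.** The real work is the bookkeeping in enlarging the $(2,j_1)$-branch set: verifying it stays connected, stays vertex-disjoint from all the unchanged branch sets, and still has exactly the needed edge to $\phi((2,j_1-1))$ and to $\phi((1,j_1))$. Observation~\ref{obs:ladder-model-in-subdivision} is the key tool here — it pins down that $H$ has no "extra" edges between non-consecutive columns, so the tail really does attach to the first $j_1$ columns only through $\phi((1,j_1))$ and $\phi((2,j_1))$, and dropping the single $\phi((1,j_1))$–$\phi((1,j_1+1))$ edge leaves the tail attached solely through $\phi((2,j_1))$, which is exactly what lets us merge the tail into that one branch set. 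The symmetric statement (the $L_{k-j_2+1}$-model) follows by applying the already-proved half to the ladder $H$ with its columns reindexed $j \mapsto k+1-j$, which sends $z_2$ to column $k+1-j_2$ and $z_1$ to column $k+1-j_1 \ge k+1-j_2$, yielding an $L_{k+1-j_2}$-model rooted at $(z_2,z_1)$, hence at $(z_1,z_2)$ after swapping the two vertices of the last rung (i.e. swapping the two rows).
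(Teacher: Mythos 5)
Your construction works in the generic case, but it has a genuine gap in the degenerate case that you yourself flag and then leave unresolved: the case \(j_1=j_2\) and \(i_1=i_2\), i.e.\ \(z_1\) and \(z_2\) lying in the \emph{same} branch set \(\phi((i_1,j_1))\) with \(j_1\ge 2\). This case is allowed by the hypotheses and cannot be defined away (each vertex of \(H\) lies in only one branch set, and in the application the rooted vertices are arbitrary vertices of \(H\), so two of them may well sit on the same subdivided edge inside one branch set). Your whole plan hinges on keeping \(\phi((1,j_1))\) as the branch set containing \(z_1\) and enlarging \(\phi((2,j_1))\) through the tail until it swallows \(z_2\); when \(z_2\) also lies in \(\phi((1,j_1))\) this is impossible without splitting that branch set, which is a subtree of \(H\), into two connected pieces, rerouting one of them to \(\phi((2,j_1-1))\) through the tail, and checking where the attachment vertices of the subtree fall relative to the \(z_1\)--piece --- a nontrivial case analysis that your sketch (``reassign \dots is impossible'', ``shift attention to column \(j_1-1\)'') does not carry out. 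As written, the proof is incomplete.

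For comparison, the paper avoids the case split entirely by not trying to reuse the column-\(j_1\) branch sets at all: with \(A\) the union of the branch sets of the first \(j_1-1\) columns, \(2\)-connectivity of \(H\) yields two disjoint \(A\)--\(\{z_1,z_2\}\) paths \(R_1,R_2\); by Observation~\ref{obs:ladder-model-in-subdivision} these leave \(A\) through \(\phi((1,j_1-1))\) and \(\phi((2,j_1-1))\) respectively, and a connecting path \(S\) in the connected graph \(H-A\) lets one take \(R_1-v_1\) and \((R_2-v_2)\cup(S-V(R_1))\) as the two new last-rung branch sets. This works uniformly, including when \(z_1\) and \(z_2\) share a branch set. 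If you want to salvage your approach, either add the missing subtree-splitting argument for the degenerate case, or replace the ``enlarge \(\phi((2,j_1))\)'' step by this Menger-type argument; your reduction of the \(L_{k-j_2+1}\) statement to the \(L_{j_1}\) statement by reversing the column order is fine and matches the paper.
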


\begin{proof}
We only show the existence of an \(L_{j_1}\)-model rooted at \((z_1, z_2)\) since an \(L_{k-j_2+1}\)-model rooted at \((z_1, z_2)\) can be obtained in
a similar way using a symmetry of the ladder.

If \(j_1 = 1\), the existence of the desired model follows from connectedness of \(H\).
Let us hence assume that \(j_1 \ge 2\).
Let \(A = \bigcup_{1 \le j < j_1} V(\phi((1, j))) \cup V(\phi((2, j)))\).
As \(H\) is \(2\)-connected, we can fix two disjoint \(A\)--\(\set{z_1, z_2}\)
paths \(R_1\) and \(R_2\).
By Observation~\ref{obs:ladder-model-in-subdivision}, in \(H\) there are exactly two edges between \(A\) and \(V(H) \setminus A\), and thus, since \(z_1\) and \(z_2\) do not lie in \(A\), one of the
paths \(R_1\) and \(R_2\) has an endpoint in \(V(\phi((1, j_1-1)))\),
and the other in \(V(\phi((2, j_1-1)))\).
We assume that \(R_1\) has an endpoint \(v_1\) in \(V(\phi((1, j_1-1)))\) and \(R_2\)
has an endpoint \(v_2\) in \(V(\phi((2, j_1-1)))\).
Since \(H - A\) is connected, we can fix a \(V(R_1)\)--\(V(R_2)\) path \(S\)
in \(H - A\).
Consider the \(L_{j_1}\)-model \(\phi'\) defined as follows.
\[
\phi'((i, j))=
\begin{cases}
\phi((i, j)) &\text{if }j < j_1,\\
R_1 - v_1 &\text{if } (i, j) = (1, {j_1}),\\
(R_2 - v_2) \cup (S - V(R_1)) &\text{if } (i, j) = (2, {j_1}).
\end{cases}
\]
The model \(\phi'\) is rooted at \((z_1, z_2)\) or \((z_2, z_1)\), so after
possibly swapping all \(\phi'((1, j))\) with \(\phi'((2, j))\) we obtain the
desired model.
\end{proof}

\begin{lemma}\label{lem:rooted-half-ladder}
  Let \(k\) be an integer with \(k \ge 2\) and let \(H\) be a graph isomorphic to a subdivision of \(L_k\).
  For every \(3\)-element vertex subset \(Z\) in \(H\) there exists a pair \((z_1, z_2)\) of distinct vertices in \(Z\) and an \(L_{\ceil{(k+1)/2}}\)-model in \(H\) rooted at \((z_1, z_2)\).
\end{lemma}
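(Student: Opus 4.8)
The plan is to reduce everything to Lemma~\ref{lem:rooted-ladder-from-ladder}, applied to a well-chosen pair of vertices of $Z$. Since $H$ is isomorphic to a subdivision of $L_k$, I would first fix an $L_k$-model $\phi$ in $H$ that covers all of $V(H)$ (obtained from the homeomorphism $H\cong{}$(subdivision of $L_k$) by assigning, for each subdivided edge, its internal vertices to one of the branch sets of its endpoints). Then every vertex of $H$ lies in exactly one branch set $\phi((i,j))$, so to each $z\in Z$ we may attach a \emph{column index} $j_z\in\{1,\dots,k\}$, namely the unique $j$ with $z\in V(\phi((i,j)))$ for some $i$. Relabel the three vertices of $Z$ as $z_1,z_2,z_3$ so that, writing $j_t=j_{z_t}$, we have $j_1\le j_2\le j_3$.

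Next I would record a one-line monotonicity fact about rooted models: if a graph has an $L_a$-model rooted at a pair $(u,v)$, then it has an $L_b$-model rooted at $(u,v)$ for every $b$ with $1\le b\le a$. Indeed, keep the branch sets of columns $1,\dots,b-1$ unchanged and, in each row, replace the $b$-th column branch set by the union of the branch sets of columns $b,b+1,\dots,a$; these unions are connected (consecutive branch sets are joined by the model edge corresponding to the horizontal edge of $L_a$), they are pairwise disjoint, the required adjacencies of $L_b$ are all present, and $u,v$ still lie in the $b$-th column branch sets.

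Now set $h=\lceil(k+1)/2\rceil=\lfloor k/2\rfloor+1$, and apply Lemma~\ref{lem:rooted-ladder-from-ladder} to the \emph{middle} vertex $z_2$ in two ways. Applied to the pair $(z_2,z_3)$ (distinct, with column indices $j_2\le j_3$), the lemma gives an $L_{j_2}$-model rooted at $(z_2,z_3)$. Applied to the pair $(z_1,z_2)$ (distinct, with column indices $j_1\le j_2$), it gives an $L_{k-j_2+1}$-model rooted at $(z_1,z_2)$. Since $j_2+(k-j_2+1)=k+1$ and $2(h-1)=2\lfloor k/2\rfloor\le k<k+1$, we cannot have both $j_2\le h-1$ and $k-j_2+1\le h-1$; hence $j_2\ge h$ or $k-j_2+1\ge h$. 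In the first case the $L_{j_2}$-model rooted at $(z_2,z_3)$, and in the second case the $L_{k-j_2+1}$-model rooted at $(z_1,z_2)$, yields the desired $L_h$-model rooted at a pair of distinct vertices of $Z$ after invoking the monotonicity fact. (The hypothesis $k\ge2$ is exactly what is needed to apply Lemma~\ref{lem:rooted-ladder-from-ladder}, and distinctness of the chosen pair is free since $|Z|=3$.)

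I do not expect a genuine obstacle: essentially all the work is hidden in Lemma~\ref{lem:rooted-ladder-from-ladder}, and the argument above is just a pigeonhole split on the column index of the middle vertex. The only new ingredient is the monotonicity observation for rooted models, and the one point to be careful about there is that the roots $u,v$ must remain in the \emph{last} column after collapsing columns $b,\dots,a$; this is the step I would state most carefully.
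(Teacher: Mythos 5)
Your proposal is correct and follows essentially the same route as the paper: fix an \(L_k\)-model, sort the three vertices of \(Z\) by column index, apply Lemma~\ref{lem:rooted-ladder-from-ladder} to the pairs \((z_1,z_2)\) and \((z_2,z_3)\) to get an \(L_{k-j_2+1}\)-model and an \(L_{j_2}\)-model, and conclude by pigeonhole since \(j_2+(k-j_2+1)=k+1\). The only difference is that you make explicit the truncation step (an \(L_a\)-model rooted at a pair contains an \(L_b\)-model rooted at the same pair for \(b\le a\), by merging the last columns), which the paper leaves implicit in the phrase ``one of these models contains a desired rooted model''; your care about the roots staying in the last column is exactly right.
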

\begin{proof}
Fix an \(L_{k}\)-model \(\phi\) in \(H\).
Let \(Z = \set{z_1, z_2, z_3}\),
and let \((i_1, j_1)\), \((i_2, j_2)\) and \((i_3, j_3)\) be three vertices of \(L_k\) such that
\(z_a \in V(\phi((i_a, j_a)))\) for \(a \in \set{1, 2, 3}\).
Without loss of generality we assume that \(j_1 \le j_2 \le j_3\).
By Lemma~\ref{lem:rooted-ladder-from-ladder},
there exist an \(L_{k-j_2+1}\)-model rooted at \((z_1, z_2)\) and an \(L_{j_2}\)-model rooted at \((z_2, z_3)\).
One of these models contains a desired rooted model of \(L_{\ceil{(k+1)/2}}\).
\end{proof}

\begin{lemma}\label{lem:new-building-ladder}
  Let \(k\) and \(m\) be integers with \(k \ge 2\) and \(m \ge 1\),
  let \(G\) be a graph,
  and let \(H^1\), \dots, \(H^{m^2 + 1}\) be disjoint subgraphs of \(G\) each isomorphic to a subdivision of \(L_k\).
  If $G$ contains as a subgraph a forest \(F\) with at most \(m\) components, such that for every \(i \in \set{1, \dots, m^2+1}\) the graph \(H^i\) intersects each component of \(F\) in at most one vertex and \(H^i\) contains vertices from at least three components of \(F\),
  then \(G\) has an \(L_{k+1}\) minor.
\end{lemma}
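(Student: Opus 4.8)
The plan is to apply Lemma~\ref{lem:rooted-half-ladder} to each ladder \(H^i\), use a pigeonhole argument over the components of \(F\) to single out two copies whose resulting ``half-ladders'' reach the same two components, and then glue those two half-ladders together through \(F\) into an \(L_{k+1}\)-model. Throughout write \(q = \ceil{(k+1)/2}\), so that \(q \ge 2\) and \(2q \ge k+1\).

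For the first step, fix \(i \in \set{1, \dots, m^2+1}\). As \(H^i\) meets at least three components of \(F\) and meets each in at most one vertex, I would choose three vertices of \(H^i\) lying in three pairwise distinct components; distinct components of a forest are vertex-disjoint, so these vertices are distinct and form a \(3\)-element set \(Z_i \subseteq V(H^i)\). Lemma~\ref{lem:rooted-half-ladder}, applied to \(H^i\) and \(Z_i\), then gives an \(L_q\)-model \(\phi_i\) in \(H^i\) rooted at some pair of vertices of \(Z_i\), which lie in two distinct components of \(F\); record this unordered pair of components. There are at most \(\binom{m}{2} \le m^2\) such pairs but \(m^2+1\) indices, so by pigeonhole there are \(a \ne b\) with the same recorded pair. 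After possibly exchanging the two rails of \(\phi_b\) (ladders have an automorphism swapping their two rails), I may assume \(\phi_a\) is rooted at \((u,w)\) and \(\phi_b\) at \((u',w')\), where \(u,u'\) lie in a common component \(C\) of \(F\) and \(w,w'\) lie in another common component \(D\). Since \(u \in V(\phi_a((1,q))) \subseteq V(H^a)\) while \(u \in V(C)\), and \(H^a\) meets \(C\) in at most one vertex, we get \(V(H^a) \cap V(C) = \set{u}\); symmetrically \(V(H^b) \cap V(C) = \set{u'}\), \(V(H^a) \cap V(D) = \set{w}\), \(V(H^b) \cap V(D) = \set{w'}\). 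In particular \(u \ne u'\) and \(w \ne w'\) because \(H^a \cap H^b = \emptyset\).

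For the gluing step, since \(C\) and \(D\) are trees I would take the \(u\)--\(u'\) path \(R\) inside \(C\) and the \(w\)--\(w'\) path \(S\) inside \(D\). By the intersection facts just stated, \(R\) and \(S\) meet \(V(H^a) \cup V(H^b)\) only at their endpoints, and \(R, S\) are disjoint as \(C \ne D\). I would then build an \(L_{2q}\)-model \(\psi\) in \(G\) whose first \(q\) columns are those of \(\phi_a\) --- with the two branch sets in column \(q\) enlarged by \(R - u'\) and \(S - w'\) respectively --- and whose remaining \(q\) columns are those of \(\phi_b\) listed in reverse: that is, \(\psi((s,\ell)) = \phi_a((s,\ell))\) for \(\ell < q\), \(\psi((1,q)) = \phi_a((1,q)) \cup (R - u')\), \(\psi((2,q)) = \phi_a((2,q)) \cup (S - w')\), and \(\psi((s,q+t)) = \phi_b((s, q+1-t))\) for \(s \in \set{1,2}\), \(t \in \set{1, \dots, q}\). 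The edge of \(R\) at \(u'\) joins \(\psi((1,q))\) to \(\psi((1,q+1)) = \phi_b((1,q))\) and the edge of \(S\) at \(w'\) joins \(\psi((2,q))\) to \(\psi((2,q+1)) = \phi_b((2,q))\); every other edge required by \(L_{2q}\) is already present inside \(\phi_a\) or \(\phi_b\); and each branch set is connected --- the only cases to remark on being \(\psi((1,q))\) and \(\psi((2,q))\), where \(R - u'\) still contains \(u\) and \(S - w'\) still contains \(w\) since \(u \ne u'\), \(w \ne w'\). Finally, restricting \(\psi\) to columns \(1, \dots, k+1\), which is possible because \(2q \ge k+1\), yields an \(L_{k+1}\)-model in \(G\).

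The step I expect to demand the most care --- though it is bookkeeping rather than a real obstacle --- is verifying that the branch sets of \(\psi\) are pairwise disjoint, which is exactly where the hypotheses get used. The \(\phi_a\)-branch sets lie in \(H^a\) and are pairwise disjoint, the \(\phi_b\)-branch sets lie in \(H^b\) and are pairwise disjoint, and \(H^a \cap H^b = \emptyset\). The sets \(R - u'\) and \(S - w'\) are disjoint from every \(\phi_b\)-branch set, because \(R\) and \(S\) meet \(V(H^b)\) only at \(u'\) and \(w'\), which were deleted. Moreover \(R - u'\) avoids \(\phi_a((2,q))\) and \(S - w'\) avoids \(\phi_a((1,q))\), and both avoid every \(\phi_a\)-branch set in a column \(\ell < q\), since \(R\) meets \(V(H^a)\) only at \(u \in V(\phi_a((1,q)))\) and \(S\) meets it only at \(w \in V(\phi_a((2,q)))\). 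And \(R - u'\) is disjoint from \(S - w'\) because \(C \ne D\). Once these checks are in place, \(\psi\) is a genuine \(L_{2q}\)-model in \(G\), completing the proof.
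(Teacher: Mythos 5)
Your proposal is correct and follows essentially the same route as the paper: apply Lemma~\ref{lem:rooted-half-ladder} to each \(H^i\), pigeonhole over the (at most \(m^2\)) pairs of components of \(F\) containing the roots, and glue the two resulting \(L_{\ceil{(k+1)/2}}\)-models through paths in the forest to obtain an \(L_{k+1}\)-model. The only differences are cosmetic: you count unordered pairs and fix orientation by swapping rails, and you spell out the gluing and disjointness checks that the paper leaves implicit.
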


\begin{proof}
  By Lemma~\ref{lem:rooted-half-ladder}, for every \(i \in \set{1, \dots, m^2 + 1}\) there exists a pair \((z^i_1, z^i_2)\) of vertices from \(V(H^i) \cap V(F)\) such that there exists an \(L_{\ceil{(k+1)/2}}\)-model \(\phi^i\) in \(H^i\) which is rooted at \((z^i_1, z^i_2)\).
  The vertices \(z^i_1\) and \(z^i_2\) lie in distinct components of \(F\), and by the pigeonhole principle, there exist distinct indices \(i\) and \(j\) in \(\set{1, \dots, m^2 + 1}\) such that
  the vertices \(z^i_1\) and \(z^j_1\) lie in the same component of \(F\), and the vertices \(z^i_2\) and \(z^j_2\) lie in the same component of \(F\).
  Let \(P_1\) be the \(z_1^i\)--\(z_1^j\) path in \(F\) and let \(P_2\) be the \(z_2^i\)--\(z_2^j\) path in \(F\).
  Gluing the \(L_{\ceil{(k+1)/2}}\)-models \(\phi^i\) and \(\phi^j\) using paths \(P_1\) and \(P_2\), we obtain a model of \(L_{k+1}\) in \(G\).
\end{proof}

We may now turn to the proof of Theorem~\ref{thm:bumping-a-ladder}, which we restate first.

\TheoremBumpingLadder*

\begin{proof}
Let us first observe that if \(k = 1\), then the theorem holds trivially because every \(3\)-connected graph contains a cycle of length at least \(4\), which is isomorphic to a subdivision of \(L_2\).
Thus, we may assume that $k \ge 2$.
Fix $m$ to be the minimum positive integer such that every graph \(G\) with no $L_{k+1}$ minor satisfies $\tdtwo(G) \le m$.
Note that $m$ is well defined by Theorem~\ref{thm:graphs-without-long-ladders}.
Let $\lambda_0$, $\lambda$, and $p$ be positive integers defined as follows.
\begin{align*}
  \lambda_0 &:= ((2m+2)(4k+1)+4)(m^2+2k^3(m-1)+1); \\
  \lambda &:= 6m(\lambda_0+3)+2;\\
  p &:= ({(m-1)}^2 + 4(m-1))(m^2 +4(m-1))(\lambda+1).
\end{align*}

We show that the theorem holds for \(N = p^m\).
Suppose not and fix $G$ to be a minor minimal 3-connected graph which contains a union of \(N\) disjoint copies of \(L_{k}\) as a minor, but which does not contain $L_{k+1}$ as a minor.
Hence we can choose disjoint subgraphs \(H_1\), \dots, \(H_N\)  of \(G\) such that each \(H_i\) is isomorphic to a subdivision of \(L_k\).
Choose such \(H_1\), \ldots, \(H_N\) which minimize the value of
\(|V(H_1 \cup \dots \cup H_N)|\).

Let \(H = H_1 \cup \dots \cup H_N\).
For each \(H_i\) fix a \(4\)-element vertex subset \(X_i\) such that the graph obtained from \(H_i\) by suppressing every vertex of degree two which is not in $X$ results in a graph isomorphic to $L_k$.
Let \(X = X_1 \cup \dots \cup X_N\).
\begin{claim}\label{cl:deg3}
Every vertex of \(G\) adjacent to a vertex not in $V(H)$ has degree \(3\) and belongs to \(X\).
\end{claim}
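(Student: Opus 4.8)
The plan is to exploit the minimality of $G$ together with Halin's lemmas on preserving $3$-connectivity under edge contraction and deletion. Suppose $v$ is a vertex of $G$ adjacent to some vertex $w \notin V(H)$. First I would argue that $v \notin V(H)$ is impossible to rule out immediately, so the argument must handle both cases and conclude degree $3$ and membership in $X$ at the end. The key point is that the edge $e = vw$ is "useless": since $w \notin V(H)$, neither endpoint of $e$ lies in any $H_i$ unless $v$ does, and in any case $e$ is not an edge of $H$. So $H$ (and hence the union of $N$ disjoint subdivisions of $L_k$) survives in $G - e$ and also survives in $G/e$ whenever the contracted vertex does not get identified with a vertex of $H$ in a way that destroys one of the $H_i$'s — and since $\deg(w)$ can be controlled, contracting $e$ essentially just deletes $w$ from the picture.

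Here is the sequence of steps. \textbf{Step 1: $G/e$ is not $3$-connected.} If $G/e$ were $3$-connected, then $G/e$ is a proper minor of $G$ that is still $3$-connected and still contains $N$ disjoint copies of $L_k$ as a minor (the subgraphs $H_i$ are unaffected by contracting $e$, as $e \notin E(H)$, unless $v \in V(H)$ and contracting merges $v$ with $w$; but then the image of $H_i$ is still a subdivision of $L_k$ since $w$ had no other neighbor forcing a problem — more carefully, one checks $H_i$ maps to a subdivision of $L_k$ in $G/e$). Moreover $G/e$ still has no $L_{k+1}$ minor since $L_{k+1}$ minors are preserved under un-contraction, i.e. if $G/e$ had an $L_{k+1}$ minor so would $G$. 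This contradicts minimality of $G$, so $G/e$ is not $3$-connected. \textbf{Step 2: $G - e$ is not $3$-connected.} Symmetrically, if $G - e$ were $3$-connected it would be a proper subgraph of $G$, still $3$-connected, still containing $H_1, \dots, H_N$ (hence $N$ disjoint copies of $L_k$), and still with no $L_{k+1}$ minor; this again contradicts minimality. \textbf{Step 3: apply Halin.} By Steps~1 and~2, neither $G/e$ nor $G - e$ is $3$-connected, so by Lemma~\ref{lem:preserving3conn} some endpoint of $e$ has degree $3$ in $G$. If the endpoint of degree $3$ were $w$, then since $w \notin V(H)$ we could try to reroute; but actually we want to conclude it is $v$. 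Here one uses that $w \notin V(H)$ and a short case analysis: if $w$ has degree $3$, contract a suitable edge at $w$ (via Lemma~\ref{lem:Halin}) to get a smaller $3$-connected graph still containing all the $H_i$ and no $L_{k+1}$ minor, contradiction; hence it must be $v$ that has degree $3$. \textbf{Step 4: $v \in X$.} Finally, $v$ has a neighbor $w \notin V(H)$, so $v$ has at most $2$ neighbors in $V(H)$; if $v \in V(H_i)$, then $v$ has degree $\le 2$ within $H_i$, and a vertex of a subdivision of $L_k$ with degree $\le 2$ in it that is incident to an edge leaving $H_i$ — combined with the minimality of $|V(H)|$ (Step: rerouting would shorten $H$) — forces $v$ to be one of the four designated corner-type vertices, i.e. $v \in X_i \subseteq X$; and if $v \notin V(H)$ at all, we derive a contradiction with minimality of $|V(H)|$ or with $3$-connectivity directly, so this case does not occur.

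\textbf{Main obstacle.} The delicate part is Step~3 and Step~4: ruling out that the degree-$3$ vertex is $w$ (rather than $v$) and, more importantly, pinning down that $v \in X$ rather than merely $v \in V(H)$. This requires carefully using the minimality of $|V(H_1 \cup \dots \cup H_N)|$: if $v$ is an internal (degree-$2$-in-$H_i$) subdivision vertex of some $H_i$ and $v$ has a neighbor outside $V(H)$, one must show that we could reroute $H_i$ through a shorter path or otherwise reduce $|V(H)|$, contradicting minimality — the bookkeeping of which subdivision vertices are "protected" (those in $X_i$, the true branch vertices of $L_k$) versus which are freely suppressible is where the real work lies. The contraction arguments in Steps~1--2 also need the (routine but not entirely trivial) verification that contracting or deleting an edge not in $E(H)$ leaves each $H_i$ as a valid subdivision of $L_k$ and does not create an $L_{k+1}$ minor out of nowhere; this is where one invokes that $L_{k+1}$-minor-freeness passes to minors and the $H_i$ are edge-disjoint from $e$.
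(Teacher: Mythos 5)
Your Steps 1--3 match the paper's argument: since the edge $e=vw$ has an endpoint outside $V(H)$, both $G/e$ and $G-e$ keep $H$ as a minor, so by minor-minimality of $G$ neither is $3$-connected, and Lemma~\ref{lem:preserving3conn} gives a degree-$3$ endpoint; ruling out that this endpoint is $w$ via Lemma~\ref{lem:Halin} (a contractible edge at $w$ would again preserve $H$ and contradict minimality) is exactly the right mechanism. The genuine gap is in Step~4, where you try to conclude $v\in X$ (and to exclude $v\notin V(H)$) from the minimality of $|V(H_1\cup\dots\cup H_N)|$ via ``rerouting would shorten $H$''. That tool does not apply here: a degree-$2$ subdivision vertex $v$ of $H_i$ having a neighbour $u\notin V(H)$ gives no shortcut inside $H_i$, so no smaller family of $N$ disjoint $L_k$-subdivisions is produced, and the $|V(H)|$-minimality yields no contradiction. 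Likewise ``$3$-connectivity directly'' does not rule out $v\notin V(H)$.

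The paper closes both points with a second use of the minor-minimality of $G$, not of $|V(H)|$: apply Lemma~\ref{lem:Halin} at the degree-$3$ vertex $x\in\{u,v\}$ to get an edge $e'$ incident to $x$ with $G/e'$ $3$-connected. If some endpoint of $e'$ lies outside $V(H)$, then $G/e'$ still contains $H$ as a minor, contradicting minimality; hence both endpoints of $e'$ lie in $V(H)$, which forces $x=v$ (so $v\in V(H)$, disposing of your problematic case $v\notin V(H)$), and since $v$ has degree $3$ with the neighbour $u\notin V(H)$, $v$ has degree exactly $2$ in $H$ and $e'\in E(H)$. Finally, if $v\notin X$, then $v$ is a suppressible subdivision vertex, so $H/e'$ still contains $N$ disjoint copies of $L_k$ as a minor while $G/e'$ is $3$-connected, again contradicting the minor-minimality of $G$; hence $v\in X$. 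So to repair your proof, replace the $|V(H)|$-minimality/rerouting argument in Step~4 by this Halin-contraction argument at $v$ (the $|V(H)|$-minimality is needed elsewhere in the paper, e.g.\ in Claim~\ref{clm:contained-ladder}, but not here).
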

\begin{cproof}
Let \(v \in V(G)\) be a vertex adjacent to a vertex \(u \in V(G) \setminus V(H)\) in \(G\).
Both $G/uv$ and $G-uv$ contain $H$ as a minor, so by minimality, neither $G/uv$ nor $G-uv$ is $3$-connected.  By Lemmas~\ref{lem:preserving3conn} and \ref{lem:Halin},  there exists $x \in \{u,v\}$ of degree $3$ in \(G\) and an edge $e$ incident to $x$ such that $G/e$ is 3-connected.
If some endpoint of $e$ is not in $V(H)$ then again $G/e$ contains $H$ as a minor, and we have a contradiction to minimality.
Thus, both endpoints of $e$ are in $V(H)$, and hence $x=v$.
As $v$ has degree $3$ in $G$ and $u \in V(G) \setminus V(H)$ is its neighbor,
we conclude that $v$ has degree exactly two in $H$ and that $e\in E(H)$.
Finally, if $v \notin X$, then $H/e$ contains as a minor $N$ disjoint copies of $L_k$ and we see that $G/e$ contradicts the minimality of our counterexample $G$. Thus, $v \in X$.
\end{cproof}


Since \(G\) has no \(L_{k+1}\) minor, we have \(\tdtwo(G) \le m\).
We have \(|V(G)| \ge N = p^m\), so by Lemma~\ref{lem:many-blocks}, there exists a vertex subset \(Z\) of \(G\) with \(|Z| \le m-1\)
such that \(G - Z\) has at least \(p\) blocks.
Fix such a set \(Z\).

Let us \emph{mark} each vertex $v$ of \(G-Z\) which satisfies
\begin{enumerate}
\item[$a)$] $v \in V(H)$ and $v$ is adjacent in $H$ to a vertex from \(Z \cap V(H)\), or
\item[$b)$] $v \in V(G) \setminus V(H)$ and $v$ is adjacent in $G$ to a vertex from $X \cap Z$.
\end{enumerate}

Since \(|Z| \le m - 1\) and the maximum degree of \(H\) is at most \(3\), the number of vertices which satisfy $a)$ is at most $3(m-1)$.  As every vertex in $X$ with a neighbor in $V(G) \setminus V(H)$ has degree $3$ by Claim \ref{cl:deg3}, there are at most $m-1$ vertices which satisfy $b)$.
We deduce the following.
\begin{claim}\label{clm:marked}
 There are at most $4m - 4$ marked vertices.
\end{claim}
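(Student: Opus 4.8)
The statement to prove is Claim~\ref{clm:marked}: there are at most $4m-4$ marked vertices.

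\textbf{Plan.} The proof is a short counting argument that partitions the marked vertices according to which of the two marking conditions $a)$ and $b)$ they satisfy, and bounds each part separately. First I would count the vertices satisfying condition $a)$. Such a vertex lies in $V(H)$ and is adjacent in $H$ to some vertex of $Z \cap V(H)$. Since $|Z| \le m-1$, we have $|Z \cap V(H)| \le m-1$; and since $H$ is a subgraph of a subdivision of disjoint copies of $L_k$, its maximum degree is at most $3$. Hence each vertex of $Z \cap V(H)$ has at most $3$ neighbors in $H$, so the number of vertices satisfying $a)$ is at most $3(m-1)$.

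Next I would count the vertices satisfying condition $b)$. Such a vertex lies in $V(G) \setminus V(H)$ and is adjacent in $G$ to some vertex of $X \cap Z$. The key input here is Claim~\ref{cl:deg3}: every vertex of $X$ having a neighbor outside $V(H)$ has degree exactly $3$ in $G$. Applying this to the (at most $m-1$) vertices of $X \cap Z$, each such vertex contributes at most $3$ neighbors, but I want a sharper bound. Actually, a vertex $w \in X \cap Z$ of degree $3$ in $G$ — since $w$ also has at least two neighbors in $V(H)$ (it has degree two in the suppressed $L_k$, so in $H$ it lies on two subdivided edges and has two $H$-neighbors) — has at most one neighbor in $V(G) \setminus V(H)$. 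Therefore each of the at most $m-1$ vertices of $X \cap Z$ contributes at most one vertex satisfying $b)$, giving at most $m-1$ such vertices.

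Adding the two bounds: the total number of marked vertices is at most $3(m-1) + (m-1) = 4(m-1) = 4m-4$, as claimed. The argument is essentially self-contained given the two earlier facts (maximum degree of $H$ is $3$, and Claim~\ref{cl:deg3}), and in fact the two sentences preceding the claim statement in the excerpt already sketch exactly this; so the ``proof'' is just the observation that these two counts sum to $4m-4$. \textbf{Main obstacle.} There is no real obstacle here — the only subtlety worth double-checking is the second count: one must verify that a vertex of $X$ with a neighbor outside $V(H)$ genuinely has at most one such outside neighbor, which follows because Claim~\ref{cl:deg3} pins its degree at $3$ while at least two of its incident edges go to $V(H)$ (being a suppressed-degree-two vertex of a subdivided $L_k$). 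If one is content with the cruder bound of $3(m-1)$ for part $b)$ as well, the total would be $6(m-1)$, so the refinement to $m-1$ is what is needed to hit exactly $4m-4$.
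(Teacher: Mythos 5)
Your argument is correct and is essentially the paper's own: the bound $3(m-1)$ for condition $a)$ via $|Z|\le m-1$ and $\Delta(H)\le 3$, and the bound $m-1$ for condition $b)$ via Claim~\ref{cl:deg3} together with the fact that an $X$-vertex has two neighbors in $H$, hence at most one neighbor outside $V(H)$. Your write-up just makes explicit the degree count for part $b)$ that the paper leaves implicit.
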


\begin{claim}\label{clm:few-components}
  \(G - Z\) has at most \({(m-1)}^2 + 4(m-1)\) components.
\end{claim}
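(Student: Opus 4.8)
The plan is to reduce to the interesting configuration, split the components of $G-Z$ according to whether they contain a marked vertex, and bound the two parts separately; essentially all the work will lie in the second part.

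First I would handle the degenerate cases. If $m=1$ then $\tdtwo(G)\le 1$ forces $G$ to be edgeless, which is incompatible with $G$ having an $L_k$-minor; hence $m\ge 2$ and $(m-1)^2+4(m-1)\ge 1$. Since $G$ is $3$-connected, deleting at most two vertices cannot disconnect it, so if $|Z|\le 2$ then $G-Z$ is connected and the claim holds; thus I may assume $|Z|\ge 3$. If $G-Z$ is connected we are likewise done, so let $C_1,\dots,C_r$ be its components with $r\ge 2$. For each $i$ the set $N_G(V(C_i))\cap Z$ separates $C_i$ from the nonempty set $\bigcup_{j\ne i}V(C_j)$, so $3$-connectivity gives $|N_G(V(C_i))\cap Z|\ge 3$; in particular every $C_i$ has a vertex adjacent to $Z$.

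Next I would split the components into those containing a marked vertex and those not. As distinct components are vertex-disjoint, Claim~\ref{clm:marked} bounds the number of components of the first type by $4m-4=4(m-1)$, so it remains to show that at most $(m-1)^2$ components contain no marked vertex. For such a component $C$ I would first analyze how it can meet $Z$. If $v\in V(C)$ is adjacent to $z\in Z$ with $v\notin V(H)$, then by Claim~\ref{cl:deg3} we have $z\in X$, so $v$ is marked through condition~$b)$, a contradiction; hence $v\in V(H)$. If in addition $z\in V(H)$ and $vz\in E(H)$, then $v$ is marked through condition~$a)$; hence every edge from $C$ to $Z$ lies in $E(G)\setminus E(H)$ and has its endpoint in $C$ inside $V(H)$, and if such an edge $vz$ has $z\notin V(H)$ then Claim~\ref{cl:deg3} forces $v\in X$ with $\deg_G(v)=3$. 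Combining this with the minimum degree of $H$ being at least two (so no marked-vertex-free component is a single vertex) and with the minimality in the choice of $G$ and of $H_1,\dots,H_N$ (used to exclude edges of $G$ between two vertices of $V(H)$ that are not edges of $H$, i.e.\ surviving chords), I would deduce that each marked-vertex-free component fully contains one of the ``clean'' subgraphs $H_i$ (those disjoint from $Z$) and attaches to $Z$ only through its distinguished corner vertices in $X_i$.

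Finally, to obtain the bound $(m-1)^2$, I would argue by contradiction: if there were more than $(m-1)^2$ marked-vertex-free components, I would assemble a forest $F$ in $G$ --- built from $Z$ (with a carefully chosen, possibly empty, edge set on it so that the attachment points spread out) together with edges from at least three corners of each relevant clean $H_i$ to $Z$ --- so that $F$ has at most $|Z|\le m-1$ components, each of the chosen clean $H_i$ meets at least three of them, and each such component is met by every clean $H_i$ in at most one vertex. Lemma~\ref{lem:new-building-ladder}, applied with $m-1$ in place of $m$ and these clean copies as the subgraphs, would then produce an $L_{k+1}$-minor in $G$, contradicting the choice of $G$. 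The step I expect to be the main obstacle is the structural analysis of the marked-vertex-free components --- in particular ruling out surviving chords of the $H_i$'s and more generally controlling edges of $G$ inside $V(H)$ that do not belong to $H$, together with the bookkeeping needed to make the forest $F$ admissible for Lemma~\ref{lem:new-building-ladder}; by comparison, the connectivity counting is routine once the structure is in place.
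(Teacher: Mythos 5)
Your overall plan coincides with the paper's: discard the at most $4m-4$ components containing a marked vertex via Claim~\ref{clm:marked}, show that each remaining component contains an entire clean copy of some $H_j$, and then anchor a forest in $Z$ so that Lemma~\ref{lem:new-building-ladder} (with $m-1$ in place of $m$) yields an $L_{k+1}$ minor from $(m-1)^2+1$ such components. However, the forest construction has a genuine gap. You build $F$ from $Z$ together with \emph{edges} from three corner vertices of each clean copy $H^i$ to $Z$, but nothing guarantees that such edges exist: a clean copy sitting inside an unmarked component $C^i$ need not have any vertex adjacent to $Z$ at all. Its connection to $Z$ may run through many other vertices of $C^i$ (vertices of other copies of $H$, or vertices outside $V(H)$, which by Claim~\ref{cl:deg3} attach to vertices of $X$), so the attachment of $H^i$ to $Z$ is via paths, not edges, and your structural analysis of edges leaving $C^i$ does not produce three direct corner--$Z$ edges for the particular copy you selected. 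Without this, your $F$ may fail to connect the chosen copies to $Z$ at all, and the hypotheses of Lemma~\ref{lem:new-building-ladder} are not met. The missing step is Menger's theorem: since $G-Z$ is disconnected, $3$-connectivity forces $|Z|\ge 3$, and since $|V(H^i)|\ge 4$, there is no $V(H^i)$--$Z$ separator of size less than $3$, so one gets three vertex-disjoint $V(H^i)$--$Z$ paths with all internal vertices in $C^i$; taking $F$ to be the union of these paths over all $(m-1)^2+1$ unmarked components gives a forest whose components are unions of paths sharing a single vertex of $Z$, hence at most $|Z|\le m-1$ components, with each $H^i$ meeting three distinct components of $F$ in one vertex each.

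A secondary point: you invoke the minimality of $G$ and of $H_1,\dots,H_N$ to exclude ``surviving chords'' of the $H_i$'s. This exclusion is not established anywhere (minimality does not obviously rule out an edge of $G$ between two branch vertices of the same $H_j$), and it is also unnecessary. That an unmarked component $C$ fully contains some copy $H_j$ follows directly: pick any vertex $v$ of $C$; either $v\in V(H)$, or by Claim~\ref{cl:deg3} and unmarkedness ($v$ has no neighbour in $X\cap Z$) $v$ has a neighbour in $X$ lying in $C$; in either case consider the copy of $H$ through that vertex. Any edge of that copy leaving $C$ would have to end in $Z$, and then its endpoint in $C$ would be marked by rule $a)$; so by connectivity of the copy it lies entirely inside $C$. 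No statement about chords, nor about attachment happening only at vertices of $X_i$, is needed.
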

\begin{cproof}
Suppose to the contrary that \(G - Z\) has \({(m-1)}^2 + 4m - 4 + 1\) distinct components
\(C^1, \dots, C^{{(m-1)}^2+4m-3}\).
By Claim \ref{clm:marked}, we can assume without loss of generality that none of the components \(C^1, \dots, C^{{(m-1)}^2 +1}\) contains a marked vertex.
For each \(i \in \set{1, \dots, {(m-1)}^2 +1}\), choose an arbitrary vertex $v$ of \(C^i\).
If $v$ is in $H$, let $H^i$ denote the (unique) subgraph among \(H_1\), \dots, \(H_N\) which contains $v$.
If $v$ is not in $H$, then, by Claim~\ref{cl:deg3}, $v$ has a neighbor $u \in X$.
As $v$ is not marked, $u\not\in Z$ and therefore $u$ lies in $C^i$ as well.
Fix such a neighbor $u$ and let $H^i$ be the component of $H$ containing the vertex $u$.
Since \(C^i\) does not contain a marked vertex, \(H^i\) must be a subgraph of \(C^i\).

For every \(i \in \set{1, \dots, {(m-1)}^2+1}\) we have $|V(C^i)| \ge 4$, since $C^i$ contains a subdivision of $L_k$.  The 3-connectivity of $G$ implies that the set $Z$ has at least three vertices.  By Menger's theorem, for every \(i \in \set{1, \dots, {(m-1)}^2 +1}\), we can fix three disjoint \(V(H^i)\)--\(Z\) paths \(Q^i_1\), \(Q^i_2\) and \(Q^i_3\) with all internal vertices contained in $V(C^i)$.
Let \(F = \bigcup_{i=1}^{(m-1)^2+1} (Q^i_1 \cup Q^i_2 \cup Q^i_3)\).
Each component of the graph \(F\) is the union of a number of paths which have a common endpoint in \(Z\) but are otherwise disjoint.
Thus \(F\) is a forest, and since \(|Z| \le m-1\), it has at most \(m-1\) components.
For every \(i \in \set{1, \dots, {(m-1)}^2+1}\) the subgraph \(H^i\) intersects three components of \(F\), each in one vertex.
By Lemma~\ref{lem:new-building-ladder}, \(G\) has an \(L_{k+1}\) minor, contradiction.
\end{cproof}

\begin{claim}\label{clm:few-leaves}
  The block graph of every component of \(G - Z\) is a tree with at most \(m^2 + 4(m-1)\) leaves.
\end{claim}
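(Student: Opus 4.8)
I would argue by contradiction, following the pattern of the proof of Claim~\ref{clm:few-components}. Since the block graph of a connected graph is a tree, only the bound on the number of leaves needs work. Fix a component \(C\) of \(G-Z\); we may assume \(C\) has at least two blocks, as otherwise its block graph has a single node. Then every leaf of the block graph of \(C\) is a block \(B\) containing exactly one cutvertex \(w_B\) of \(C\); call \(P_B:=V(B)\setminus\{w_B\}\) its \emph{private part}. The sets \(P_B\) over leaf blocks \(B\) are pairwise disjoint, consist of non-cutvertices of \(C\), and satisfy \(N(P_B)\setminus P_B\subseteq\{w_B\}\cup Z\): an external neighbour of \(P_B\) lies in the unique block containing its neighbour in \(P_B\), hence equals \(w_B\), unless it lies outside \(C\), hence in \(Z\); note also \(w_B\in N(P_B)\). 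Assume for contradiction that \(C\) has more than \(m^2+4(m-1)\) leaf blocks. By Claim~\ref{clm:marked} at most \(4(m-1)\) of them have a marked vertex in their private part, so a family \(\mathcal B\) of at least \(m^2+1\) leaf blocks with marked-free private parts remains.

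The first main step is: for every \(B\in\mathcal B\) there is an \(H_j\) with \(V(H_j)\subseteq V(B)\) and \(V(H_j)\cap P_B\neq\emptyset\). If \(P_B\) avoided \(V(H)\), then (since \(V(G)\setminus V(H)\) is independent by Claim~\ref{cl:deg3}) \(B\) would be a \(K_2\) whose private vertex has, by \(3\)-connectivity and Claim~\ref{cl:deg3}, at least two neighbours in \(X\cap Z\), hence is marked---a contradiction. So pick \(v\in P_B\cap V(H)\), say \(v\in V(H_j)\); recall \(H_j\) is a subdivision of \(L_k\), hence \(2\)-connected with at least \(4\) vertices. Any edge of \(H_j\) from \(V(H_j)\cap P_B\) to \(V(H_j)\setminus V(B)\) would, inside \(H\), join a vertex of \(P_B\cap V(H)\) to a vertex of \(Z\cap V(H)\) and thus mark the former; as \(P_B\) is marked-free there is no such edge, so in the connected graph \(H_j-w_B\) the sets \(V(H_j)\cap P_B\) and \(V(H_j)\setminus V(B)\) are unjoined, forcing \(V(H_j)\subseteq V(B)\). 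Write \(H^B:=H_j\). Since distinct blocks share at most one vertex while \(|V(H^B)|\ge4\), distinct \(B\in\mathcal B\) yield distinct indices, so the subdivisions \(H^B\) \((B\in\mathcal B)\) are pairwise disjoint.

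The second main step is to feed these into Lemma~\ref{lem:new-building-ladder}. For \(B\in\mathcal B\) set \(Z_B:=Z\cap N(P_B)\), so \(N(P_B)\setminus P_B=\{w_B\}\cup Z_B\) and, by \(3\)-connectivity, \(|\{w_B\}\cup Z_B|\ge3\). As \(|V(H^B)|\ge4\) and \(G\) minus any two vertices is connected, Menger's theorem gives three vertex-disjoint \(V(H^B)\)--\((\{w_B\}\cup Z_B)\) paths; truncating each at its first vertex of \(\{w_B\}\cup Z_B\) yields paths \(R^B_1,R^B_2,R^B_3\) inside \(P_B\cup\{w_B\}\cup Z_B\), with all interior vertices in \(P_B\), each meeting \(V(H^B)\) in exactly one vertex, and ending at three distinct vertices of \(\{w_B\}\cup Z_B\)---so at least two of these endpoints lie in \(Z\). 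Let \(C_0\) be the subgraph of \(C\) induced by \(V(C)\setminus\bigcup_{B\in\mathcal B}P_B\); deleting each \(P_B\) (a set of non-cutvertices in one leaf block) preserves connectedness and no \(w_B\), so \(C_0\) is connected and contains every \(w_B\). With \(T_0\) a spanning tree of \(C_0\), put \(F:=T_0\cup\bigcup_{B\in\mathcal B}(R^B_1\cup R^B_2\cup R^B_3)\). Distinct pieces of this union meet only in vertices of \(\{w_B:B\in\mathcal B\}\subseteq V(C_0)\) or of \(Z\), and each \(R^B_a\) hangs off \(F\) at its endpoint in \(\{w_B\}\cup Z_B\) (its other endpoint has degree one in \(F\)); hence \(F\) is a forest whose components are the one containing \(C_0\) (which absorbs every \(R^B_a\) ending at some \(w_B\)) together with at most \(|Z|\le m-1\) further ``stars'', one per vertex of \(Z\) used---at most \(m\) in all. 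For each \(B\in\mathcal B\), the three vertices of \(V(H^B)\) met by \(R^B_1,R^B_2,R^B_3\) are distinct and lie in three distinct components of \(F\) (the two indexed by the two \(Z\)-endpoints, plus either the \(C_0\)-component or a third \(Z\)-star), and \(H^B\) meets each of these components in just that one vertex. Applying Lemma~\ref{lem:new-building-ladder} to any \(m^2+1\) of the \(H^B\) together with \(F\) produces an \(L_{k+1}\) minor in \(G\), the desired contradiction.

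I expect the main difficulty to be the construction in the last step: choosing the three disjoint paths so that their interiors remain inside the private parts and their endpoints stay distinct after truncation, and then checking carefully that gluing the resulting ``fans'' along \(C_0\) and along \(Z\) yields a genuine forest with at most \(m\) components in which each \(H^B\) still touches three of them in single vertices. The bookkeeping in the first two steps, based on the marking and on \(3\)-connectivity, is routine and parallels the proof of Claim~\ref{clm:few-components}.
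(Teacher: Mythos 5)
Your argument is correct and is essentially the paper's own proof: it isolates \(m^2+1\) leaf blocks whose private parts are free of marked vertices (via Claim~\ref{clm:marked}), shows each such block contains an entire subdivision \(H^B\) using Claim~\ref{cl:deg3} and \(2\)-connectedness of the \(H_j\), extracts three disjoint Menger paths from \(V(H^B)\) to \(\{w_B\}\cup Z\) with interiors in the private part, glues them with a tree through the cutvertices into a forest with at most \(m\) components, and invokes Lemma~\ref{lem:new-building-ladder} for the contradiction. The only loose ends you flag yourself (making each truncated path meet \(V(H^B)\) in exactly one vertex, and taking the trivial path at \(w_B\) when \(w_B\in V(H^B)\)) are routine re-truncations that the paper also leaves implicit through its convention on \(A\)--\(B\) paths.
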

\begin{cproof}
Suppose to the contrary that there is a component \(C\) of \(G - Z\) whose block graph \(\calT\) is a tree with more than \(m^2+4(m-1)\) leaves.
Let \(B^1\), \dots, \(B^{{m}^2 + 4(m-1)+1}\) be distinct leaves of \(\calT\),
and for each \(i \in \set{1, \dots, m^2 + 4(m-1) + 1}\), let \(a^i\) be the cutvertex of \(G - Z\) adjacent to \(B^i\) in \(\calT\).
By Claim \ref{clm:marked}, we may assume without loss of generality that for \(i \in \set{1, \dots, {m}^2 + 1}\), there is no marked vertex in \(B^i - a^i\).
For each \(i \in \set{1, \dots, {m^2 + 1}}\) we show there exists a component \(H^i\) of \(H\) contained in \(B^i\).
Fix an index \(i \in \set{1, \dots, {m^2 + 1}}\) and let $v$ be a vertex of $B^i - a^i$.
If $v \in V(H)$, fix $H^i$ to be the component of $H$ which contains $v$.
If $v \in V(G) \setminus V(H)$, then $v$ has at least three neighbors all of which must be contained in $X$. Fix a neighbor \(u \in X \setminus \set{a_i}\) of \(v\). As $v$ is not marked, $u$ is a vertex of $B^i$. In this case, fix $H^i$ to be the component of $H$ which contains $u$.
Since \(B^i - a^i\) has no marked vertex, no edge of \(H^i\) connects \(B^i - a^i\) to \(Z\).
Since $a^i$ is the only cutvertex of $G-Z$ in $B^i$ and since $H_i$ is $2$-connected,
we conclude that $H^i$ is contained in $B^i$.

Note that \(B^i - a^i\) is a component of \(G - (Z \cup \set{a^i})\).
As \(G\) is \(3\)-connected, this implies that \(|Z \cup \set{a^i}| \ge 3\).
Hence by Menger's theorem, there exist three disjoint paths from $V(H^i)$ to the set $Z \cup \set{a^i}$ with all internal vertices contained in $B^i - a^i$.
Fix such three disjoint \(V(H^i)\)--\((Z \cup \set{a^i})\) paths \(Q^i_1\), \(Q^i_2\) and \(Q^i_3\).

Let \(T\) be a tree in \(G - Z\) which contains all vertices \(a^1\), \dots, \(a^{m^2+1}\) but none of the vertices of \(B^i - a^i\) for \(i \in \set{1, \dots, m^2+1}\), and let \(F = T \cup \bigcup_{i=1}^{m^2+1}(Q^i_1 \cup Q^i_2 \cup Q^i_3)\).
Each component of \(F\) is either a tree obtained as the union of paths which have a common endpoint in \(Z\) but are otherwise disjoint, or a tree obtained as the union of \(T\) and paths with one endpoint at \(a^i \in V(T)\) for some $i\in\{1,\ldots,m^2+1\}$.
Since \(|Z| \le m-1\), this implies that \(F\) is a forest with at most \(m\) components.
Moreover, for every \(i \in \set{1, \dots, {m}^2+1}\),
the graph \(H^i\) intersects each component of \(F\) in at most one vertex, and \(H^i\) has a non-empty intersection with at least three components of \(F\).
Hence by Lemma~\ref{lem:new-building-ladder}, \(G\) has an \(L_{k+1}\) minor, contradiction.
\end{cproof}

Recall that the graph \(G - Z\) has at least \(p\geq ({(m-1)}^2 + 4(m-1))(m^2+4(m-1))(\lambda+1)\) blocks.
Hence by Claim~\ref{clm:few-components}, there exists a component \(C\) of \(G - Z\) with at least \((m^2+4(m-1))(\lambda+1)\) blocks.
Let \(\calT\) be the block graph of \(C\).
By Claim~\ref{clm:few-leaves}, \(\calT\) has at most \((m^2+4(m-1))\) leaves, and thus \(\calT\) contains a path on \(\lambda+1\) blocks and \(\lambda\) cutvertices.
Let \(B_0 a_1 B_1 a_2 \dots a_{\lambda} B_{\lambda}\) be such a path.
Let \(\calT'\) denote the subgraph of \(\calT\) obtained by removing all edges of the form \(a_i B_i\) with \(i \in \set{1, \dots, \lambda}\).
For every \(i \in \set{0, \dots, \lambda}\) let \(W_i\) be the
set of all vertices contained in those blocks of \(C\) which lie in the same component of \(\calT'\) as \(B_i\).
The sets \(W_0\), \dots, \(W_\lambda\) obtained this way induce connected subgraphs of \(C\) whose union is \(C\), and for every \(i \in \set{1, \dots, \lambda}\), we have
\begin{equation*}
(W_0 \cup \dots \cup W_{i-1}) \cap (W_{i} \cup \dots \cup W_\lambda) = W_{i-1} \cap W_i = \set{a_i}.
\end{equation*}
See~Figure~\ref{fig:path-of-B-blocks}.

\begin{figure}[!h]
    \centering
    \includegraphics[scale=1.0]{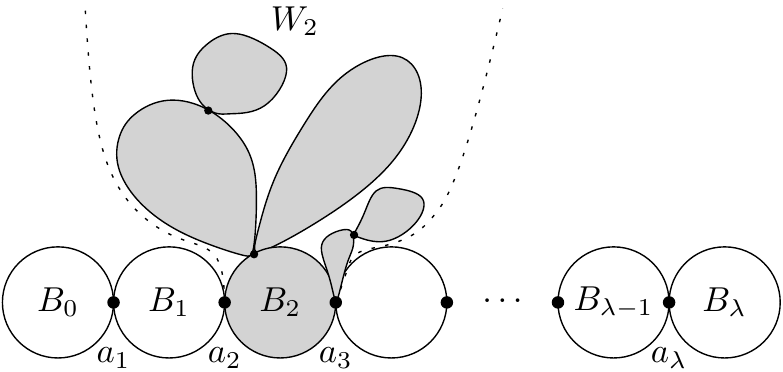}
    \caption{\label{fig:path-of-B-blocks}
    Blocks of the component $C$ of $G-Z$. The block graph $\calT$ of $C$ contains a long path \(B_0 a_1 B_1 a_2 \dots a_{\lambda} B_{\lambda}\). The remaining blocks are grouped into bundles. The bundle $W_2$ is highlighted in gray.}
\end{figure}

\begin{claim}\label{cl:clean-subpath}
  There exists an index \(i_0\) with \(1 \le i_0\) and \(i_0 + \lambda_0 \le \lambda - 1\) such that
  \begin{enumerate}
    \item\label{itm:nomarked} \(W_{i_0} \cup \dots \cup W_{i_0+\lambda_0}\) does not contain a marked vertex, and
    \item\label{itm:twopaths} there exist two disjoint \((W_0 \cup \dots \cup W_{i_0-1})\)--\((W_{i_0 + \lambda_0 +1} \cup \dots \cup W_\lambda)\) paths in $G$ which are internally disjoint from \(C\). See~Figure~\ref{fig:path-of-W-blocks}.
  \end{enumerate}
\end{claim}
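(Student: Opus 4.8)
The plan is to find the index $i_0$ by a counting argument that handles the two requirements separately and then intersect the sets of "good" indices.

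For requirement~\ref{itm:nomarked}, I would use Claim~\ref{clm:marked}: there are at most $4m-4$ marked vertices in $G-Z$, hence at most $4m-4$ of the bundles $W_0, \dots, W_\lambda$ contain a marked vertex (a marked vertex lies in some block of $C$, which lies in exactly one $W_j$). So a window of $\lambda_0+1$ consecutive bundles $W_{i_0}, \dots, W_{i_0+\lambda_0}$ is marked-vertex-free as long as it avoids the at most $4m-4$ "bad" indices; since $\lambda$ is chosen much larger than $(\lambda_0+3)(4m-4)$ (indeed $\lambda = 6m(\lambda_0+3)+2$), a simple pigeonhole over blocks of $\lambda_0+1$ consecutive indices produces many candidate intervals satisfying~\ref{itm:nomarked}.

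For requirement~\ref{itm:twopaths}, the idea is that $G$ is $3$-connected, so for any index $i$ there cannot be a $2$-separation of $G$ separating $W_0 \cup \dots \cup W_{i-1}$ from $W_{i+\lambda_0+1} \cup \dots \cup W_\lambda$ whose separator meets $C$ only in the cutvertices $\{a_i, a_{i+\lambda_0+1}\}$ (which already lie in the intersection of the two sides' closures). More precisely, the only vertices of $C$ in both halves are $a_i$ and $a_{i+\lambda_0+1}$, so by Menger's theorem either there exist two disjoint paths between the two halves that are internally disjoint from $C$ — which is exactly~\ref{itm:twopaths} — or there is a single vertex of $V(G)\setminus V(C)$, together with possibly $a_i$ and $a_{i+\lambda_0+1}$, separating them; but a separator of size at most... one can fail only when the "outside" of $C$ does not reconnect the two ends. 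The key point is that $Z$ has at most $m-1$ vertices, so at most $2(m-1)$ "internal obstructions" can occur, i.e. for all but boundedly many indices $i$ (roughly those near where the at most $|Z|$ outside-connections attach) the two disjoint paths through $G-V(C)$ exist. I would formalize this by looking at which bundles $W_j$ receive an edge to $Z$ or, more carefully, at the at most $|Z| \le m-1$ vertices of $Z$ and routing through $Z$: if $i_0-1$ and $i_0+\lambda_0+1$ are both large enough and small enough away from the "attachment pattern" of $Z$ to $C$, then $Z$ supplies the needed connectivity between the two ends.

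**The main obstacle** I expect is requirement~\ref{itm:twopaths}: showing that for all but a bounded number of indices the two disjoint external paths exist, since a naive application of Menger only gives one path unless one controls how $Z$ (and the rest of $G$ outside $C$) attaches to the long block-path. The resolution should be that there are at most $|Z| \le m-1$ vertices available outside $C$ as "hubs," each attaching to $C$ at boundedly many places, so only $O(m)$ of the $\lambda+1$ positions can be "cut points" for external connectivity; combining with the $O(m)$ bad indices from~\ref{itm:nomarked} and the choice $\lambda = 6m(\lambda_0+3)+2 \gg (\lambda_0+3)\cdot O(m)$, a length-$(\lambda_0+2)$ window avoiding all bad positions exists, giving the desired $i_0$. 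I would write the two bounds explicitly, add them, multiply by $\lambda_0+3$ to account for the window length, and check this is less than $\lambda$.
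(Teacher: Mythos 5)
There is a genuine gap in your treatment of item~\ref{itm:twopaths}. Your plan rests on the assertion that the vertices of $Z$ are ``hubs'' each attaching to $C$ at boundedly many places, so that only $O(m)$ of the positions along the block path can be bad and a window can be chosen ``away from the attachment pattern of $Z$''. That is not justified and is false in general: the marking rules only control edges of $H$ going to $Z\cap V(H)$ and edges from $X\cap Z$ to vertices outside $V(H)$, so a vertex $z\in Z$ may have (unmarked) neighbours in arbitrarily many of the bundles $W_i$ -- possibly in all of them. Consequently there need not exist any window avoiding all attachments of $Z$, and your pigeonhole count for \ref{itm:twopaths} does not get off the ground. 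Your Menger step is also not sound as stated: two paths between the two halves that are \emph{internally disjoint from $C$} are paths in $G$ minus the (huge) middle part of $C$, and $3$-connectivity of $G$ gives no contradiction when a single external vertex separates the halves in that smaller graph, because in $G$ itself the halves are still joined through the middle of $C$. So the existence of the two external paths is genuinely nontrivial and cannot be read off from $3$-connectivity plus a bounded list of exceptional indices.

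The paper's proof supplies exactly the missing idea. For each $z\in Z$ one records only the \emph{two largest} indices $i$ at which $z$ attaches to $W_i\setminus\{a_i\}$ (a set $J_z$, so $|J|<2m$ in total), and the window is chosen, by pigeonhole over $\lambda-2=6m(\lambda_0+3)$ indices, to avoid both $J$ and the marked vertices. Then one deletes the cutvertex $a_{i_0}$ and uses $2$-connectedness of $G-a_{i_0}$ to obtain two disjoint paths from the left part $U_1$ to the rest of $C$; since $a_{i_0}$ separates $U_1$ from the rest inside $C$, these paths are automatically internally disjoint from $C$. The remaining difficulty -- which your sketch does not address -- is that such a path may terminate in the middle region $U_2$ rather than beyond the window; here the choice of the window enters: the $Z$-vertex preceding such an endpoint attaches inside the window although the window avoids its two largest attachment indices, so both of those indices lie to the right of the window, giving that vertex two neighbours in $U_3$ and allowing the path to be rerouted there. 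Without this ``two largest attachments'' bookkeeping (or some equivalent device), the step from one external path to two disjoint external paths ending beyond the window does not follow, so your argument for \ref{itm:twopaths} cannot be completed as proposed.
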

\begin{figure}[!h]
    \centering
    \includegraphics[scale=1.0]{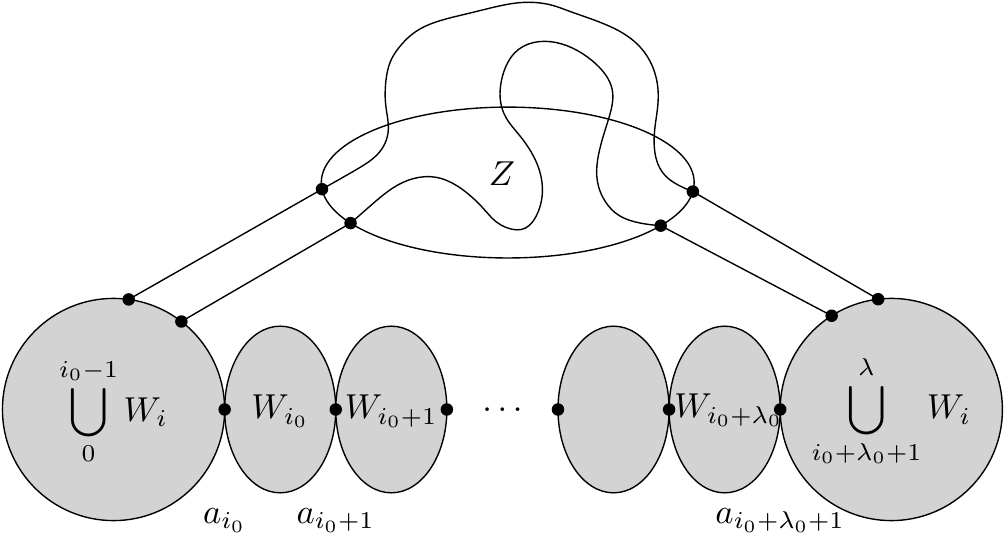}
    \caption{\label{fig:path-of-W-blocks}
    \(W_{i_0} \cup \dots \cup W_{i_0+\lambda_0}\) does not contain a marked vertex, and there are two disjoint paths connecting outside regions and internally disjoint with $C$.}
\end{figure}
\begin{cproof}
  For each \(z \in Z\), let \(J_z\) denote the set of the two largest indices
  \(i \in \set{2, \dots, \lambda - 1}\) such that \(z\) is adjacent to a vertex from \(W_i \setminus \set{a_{i}}\) (if there are less than two such indices, let \(J_z\) consist of all these indices, possibly \(J_z = \emptyset\)).
  Let \(J = \bigcup_{z \in Z} J_z\).
  As \(|Z| < m\), we have \(|J| < 2m\).

  By Claim \ref{clm:marked}, there are less than \(6m\) indices \(i \in \set{2, \dots, \lambda - 1}\) such that \(i \in J\) or \(W_i \setminus \set{a_{i+1}}\) contains a marked vertex.
  Consider the set \(\set{2, \dots, \lambda - 1}\) of \(\lambda-2\) consecutive indices.
  As \(\lambda - 2 = 6m(\lambda_0 + 3)\), by the pigeonhole principle we can find in this set at least \(\lambda_0 + 3\) consecutive indices $i$ such that \(i \not \in J\) and \(W_i \setminus \set{a_{i+1}}\) does not contain a marked vertex.
  We fix a sequence of \(\lambda_0 + 2\) such indices:
  Let \(i_0\) be an index with \(2\le i_0\) and \(i_0 + \lambda_0  + 1 \le \lambda - 1\) such that \(i \not \in J\) and \(W_i \setminus \set{a_{i+1}}\) does not contain a marked vertex, for every \(i\in \set{i_0, \dots, i_0 + \lambda_0 + 1}\).
  We have
  \[\bigcup_{i=i_0}^{i_0 + \lambda_0} W_i \subseteq \bigcup_{i=i_0}^{i_0+\lambda_0+1} W_i \setminus \set{a_{i+1}}.\]
  By the choice of \(i_0\), the right-hand side does not contain a marked vertex, therefore the index \(i_0\) satisfies \ref{itm:nomarked}.

  Partition the set \(V(C) \setminus \set{a_{i_0}}\) into the sets \(U_1\), \(U_2\) and \(U_3\), where
  \begin{align*}
      U_1 &= (W_0 \cup \dots \cup W_{i_0-1}) \setminus \set{a_{i_0}},\\
      U_2 &= (W_{i_0} \cup \dots \cup W_{i_0 + \lambda_0}) \setminus \set{a_{i_0}}\text{, and}\\
      U_3 &= (W_{i_0 + \lambda_0 + 1} \cup \dots \cup W_{\lambda}) \setminus \set{a_{i_0 + \lambda_0 + 1}}.
  \end{align*}
  Since \(G\) is \(3\)-connected, the graph \(G - a_{i_0}\) is \(2\)-connected.
  We have \(|U_1| \ge 2\) as \(i_0 \ge 2\), and \(|U_2 \cup U_3| \ge 2\) as \(\lambda - i_0 \ge  \lambda_0 \ge 2\), so there exist two disjoint \(U_1\)--\((U_2 \cup U_3)\) paths \(P_1\) and \(P_2\) in \(G - a_{i_0}\).
  For \(i \in \set{1, 2}\), let \(v_i\) denote the endpoint of \(P_i\) contained in \(U_2 \cup U_3\), and let \(z_i\) denote the vertex adjacent to \(v_i\) in \(P_i\).
  Since \(a_{i_0}\) is a cutvertex in \(C\), we deduce that \(\set{z_1, z_2} \subseteq Z\).

  Each of the paths \(P_1\) and \(P_2\) is internally disjoint from \(V(C)\) and has one endpoint in \(U_1\), but the other endpoint possibly lies in \(U_2\) and not in \(U_3\) as required by~\ref{itm:twopaths}.
  We will show however that if for some \(i \in \set{1, 2}\) we have \(v_i \in U_2\), then \(z_i\) is adjacent to at least two vertices in \(U_3\).
  This will imply that by replacing the endpoints of \(P_1\) and \(P_2\) contained in \(U_2\) we can obtain two disjoint \(U_1\)--\(U_3\) paths in \(G\) which are internally disjoint from \(V(C)\), thus proving that \(i_0\) satisfies~\ref{itm:twopaths}.

  Let us hence fix an index \(i \in \set{1, 2}\) such that \(v_i \in U_2\), and let \(j \in \set{i_0, \dots, i_0 + \lambda_0}\) be such that \(v_i \in W_j \setminus \set{a_j}\).
  By our choice of \(i_0\), we have \(j \not \in J_{z_i}\).
  Since \(z_i\) is adjacent to the vertex  \(v_i \in W_j \setminus \set{a_j}\), the definition of \(J_{z_i}\) implies that \(|J_{z_i}| = 2\) and both elements of \(J_{z_i}\) are greater than \(i_0 + \lambda_0\).
  Therefore \(z_i\) is indeed adjacent to at least two vertices in \(U_3\), completing the proof of~\ref{itm:twopaths}
\end{cproof}

Let us fix an index \(i_0\) as in Claim~\ref{cl:clean-subpath}.

\begin{claim}\label{clm:cc}
  For every index \(i \in \set{i_0, \dots, i_0 + \lambda_0 - 2}\), either
  \begin{enumerate}
      \item\label{itm:contained} some component of \(H\) has all its vertices contained in one of the sets \(W_i\), \(W_{i+1}\) or \(W_{i+2}\), or
      \item\label{itm:connected} there is a path in \(H\) between $a_i$ and $a_{i+1}$ with all vertices in $W_i$.
  \end{enumerate}
\end{claim}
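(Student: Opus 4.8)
The plan is to analyse, for a fixed index $i \in \{i_0, \dots, i_0+\lambda_0-2\}$, how $H$ interacts with the bundle $W_i$, using that $W_i, W_{i+1}, W_{i+2}$ all lie in the clean region $W_{i_0} \cup \dots \cup W_{i_0+\lambda_0}$ and hence, by Claim~\ref{cl:clean-subpath}, contain no marked vertex. I would first record two facts about a bundle $W$ among $W_i, W_{i+1}$ with boundary vertices $a, b$ (so $\{a,b\}=\{a_i,a_{i+1}\}$ if $W=W_i$, and $\{a,b\}=\{a_{i+1},a_{i+2}\}$ if $W=W_{i+1}$). \emph{Fact (a):} every edge of $H$ with an endpoint in $W\setminus\{a,b\}$ has its other endpoint in $W$; for such an edge $xy$ with $x\in W\setminus\{a,b\}$ we cannot have $y\in Z$ (that would make $x$, a vertex of the clean region, marked), $y$ cannot lie in another component of $G-Z$, and $\{a,b\}$ separates $W\setminus\{a,b\}$ from $C\setminus W$ inside $C$ by the properties of the bundles $W_0,\dots,W_\lambda$. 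In particular $\{a,b\}\cap V(H)$ separates $(W\setminus\{a,b\})\cap V(H)$ from $V(H)\setminus W$ in $H$. \emph{Fact (b):} either some $H_j$ meets $W\setminus\{a,b\}$, or $W=\{a,b\}$; otherwise a vertex $v\in W\setminus\{a,b\}$ would not lie in $V(H)$, so by Claim~\ref{cl:deg3} all its neighbours lie in $X\subseteq V(H)$, none in $Z$ (as $v$ is unmarked), and all inside $W$, hence all in $W\cap V(H)\subseteq\{a,b\}$, giving $v$ degree at most $2$, contradicting $3$-connectivity of $G$.

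From Fact (a) and the $2$-connectivity of $H_j$, an $H_j$ that meets $W\setminus\{a,b\}$ but is not contained in $W$ must contain both $a$ and $b$, with $\{a,b\}$ a $2$-cut of $H_j$; then the component $K$ of $H_j-\{a,b\}$ that lies in $W\setminus\{a,b\}$ is (again by $2$-connectivity) joined to both $a$ and $b$, so $H_j$ contains an $a$--$b$ path with all internal vertices in $K\subseteq W$. Applying this with $W=W_i$: if some $H_j$ meets $W_i\setminus\{a_i,a_{i+1}\}$, then either $H_j\subseteq W_i$, which is~\ref{itm:contained}, or we extract an $a_i$--$a_{i+1}$ path inside $H[W_i]$, which is~\ref{itm:connected}. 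By Fact (b) the only remaining case is $W_i=\{a_i,a_{i+1}\}$, i.e.\ the block $B_i$ is the single edge $a_i a_{i+1}$; if this edge lies in $H$ it is the path required for~\ref{itm:connected}, so I would then assume $a_i a_{i+1}\notin E(H)$.

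In this degenerate case I would turn to $W_{i+1}$. First I rule out that no $H_j$ meets $W_{i+1}\setminus\{a_{i+1},a_{i+2}\}$: by Fact (b) this would force $W_{i+1}=\{a_{i+1},a_{i+2}\}$, so $a_{i+1}$ lies only in the blocks $B_i,B_{i+1}$ of $C$ and its only neighbours in $G$ are $a_i$, $a_{i+2}$, and possibly vertices of $Z$; being unmarked, $a_{i+1}$ must then lie in $V(H)$ (otherwise its neighbour in $Z$ would be in $X\cap Z$ by Claim~\ref{cl:deg3}), all its neighbours in $H$ lie in $\{a_i,a_{i+2}\}$ (an edge of $H$ from $a_{i+1}$ to $Z$ would mark it), and since $a_{i+1}$ has degree at least $2$ in $H$ this yields $a_i a_{i+1}\in E(H)$, a contradiction. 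Hence some $H_j$ meets $W_{i+1}\setminus\{a_{i+1},a_{i+2}\}$. If $H_j\subseteq W_{i+1}$ we get~\ref{itm:contained}; otherwise, as above, $a_{i+1}\in V(H_j)$ and some component $K$ of $H_j-\{a_{i+1},a_{i+2}\}$ with $V(K)\cap W_{i+1}=\emptyset$ (by Fact (a)) is joined to $a_{i+1}$ by an edge of $H_j$, whose endpoint in $K$ is a neighbour of $a_{i+1}$ outside $W_{i+1}$, hence equal to $a_i$ or a vertex of $Z$ — the former contradicts $a_i a_{i+1}\notin E(H)$, the latter would mark $a_{i+1}$. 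So $H_j\subseteq W_{i+1}$ and~\ref{itm:contained} holds.

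The main obstacle is the degenerate case $W_i=\{a_i,a_{i+1}\}$, together with the possibility of a run of single-edge bundles: all other situations drop out of Facts (a) and (b), but this one needs the nested analysis of $W_{i+1}$ and a careful accounting of which edges incident to $a_{i+1}$ can belong to $H$. A secondary point to watch is that the ``interior'' component of $H_j$ really stays inside the relevant bundle, which again relies on the separation statement in Fact (a).
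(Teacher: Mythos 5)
Your proof is correct. All the key verifications go through: Fact (a) follows from the absence of marked vertices in $W_{i_0}\cup\dots\cup W_{i_0+\lambda_0}$ together with the fact that each block lies in a single bundle, so $\{a_i,a_{i+1}\}$ (resp.\ $\{a_{i+1},a_{i+2}\}$) really does separate the interior of the bundle from everything else as far as $H$-edges are concerned; Fact (b) correctly combines Claim~\ref{cl:deg3} with $3$-connectivity; and the extraction of the $a_i$--$a_{i+1}$ path from a component of $H_j-\{a_i,a_{i+1}\}$ lying inside $W_i$ is a sound use of $2$-connectivity of a subdivision of $L_k$ (which is available since the ambient proof has already reduced to $k\ge 2$). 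The degenerate case $W_i=\{a_i,a_{i+1}\}$ with $a_ia_{i+1}\notin E(H)$ — where item~\ref{itm:connected} genuinely can fail — is the delicate point, and your nested analysis of $W_{i+1}$ handles it: the accounting of which neighbours $a_{i+1}$ can have, and hence where an $H$-edge leaving $W_{i+1}$ at $a_{i+1}$ could go, is complete.

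The route differs from the paper's in its case decomposition, though the toolkit is the same. The paper anchors on a component $H^0$ of $H$ meeting $W_{i+1}$ (which exists because $V(G)\setminus V(H)$ is independent and $B_{i+1}$ contains an edge), splits on whether $H^0\subseteq W_i\cup W_{i+1}\cup W_{i+2}$, and in the second case forces $a_i,a_{i+1}\in V(H^0)$ and then the $W_i$-path by exhibiting nontrivial $1$-separations of the $2$-connected $H^0$; this is why the statement of~\ref{itm:contained} mentions all three bundles. You instead anchor on $W_i$, use the $2$-cut $\{a_i,a_{i+1}\}$ directly, and fall back on $W_{i+1}$ only in the degenerate single-edge case. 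Your argument never invokes $W_{i+2}$ and thus proves the slightly stronger statement that in~\ref{itm:contained} the component can be taken inside $W_i$ or $W_{i+1}$; the price is that you must treat the single-edge bundle explicitly, whereas the paper's separation argument absorbs that case silently.
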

\begin{cproof}
  For the contrary, suppose that for a fixed index $i\in\set{i_0, \dots, i_0 + \lambda_0 - 2}$ both items do not hold.
  By Claim~\ref{cl:deg3}, \(V(G)\setminus V(H)\) is an independent set in \(G\).
  As \(W_{i+1}\) contains two endpoints of an edge in the block \(B_{i+1}\), this implies
  that \(V(H) \cap W_{i+1} \neq \emptyset\).
  Fix a component \(H^0\) of \(H\) such that \(V(H^0) \cap W_{i+1} \neq \emptyset\).

  Since the set \(W_{i} \cup W_{i + 1} \cup W_{i + 2}\) does not contain a marked vertex, no edge of \(H^0\) connects a vertex in \(W_{i} \cup W_{i + 1} \cup W_{i + 2}\) to a vertex in \(Z\).
  As \(a_{i+1}\) and \(a_{i+2}\) are cutvertices of \(G[W_{i} \cup W_{i + 1} \cup W_{i + 2}]\) and \(H^0\) is \(2\)-connected, this implies that if \(V(H^0) \subseteq W_{i} \cup W_{i + 1} \cup W_{i + 2}\), then \ref{itm:contained} holds.
  Let us hence assume that
\[V(H^0) \setminus (W_i \cup W_{i+1} \cup W_{i+2}) \neq \emptyset.\]

  Let \(U = (V(G) \setminus (W_i \cup W_{i+1} \cup W_{i+2})) \cup \set{a_i, a_{i+3}}\).
  Note that \(a_i \in V(H^0)\) as otherwise \(((W_i \cup W_{i+1} \cup W_{i+2}) \cap V(H^0), U \cap V(H^0))\) is a nontrivial \(1\)-separation of the \(2\)-connected \(H^0\).
  Moreover, if \(a_{i+1} \not \in V(H^0)\), then \(((W_{i+1} \cup W_{i+2}) \cap V(H^0), (U \cup W_i) \cap V(H^0))\) is a nontrivial \(1\)-separation of \(H^0\), so \(a_{i+1} \in V(H^0)\).
  Therefore \(\set{a_i, a_{i+1}} \subseteq V(H^0)\).

  Towards a contradiction, suppose that \(H^0\) does not contain a path between \(a_i\) and \(a_{i+1}\) with all vertices in \(W_i\), and let \(D\) be the component of \(H^0[W_i \cap V(H^0)]\) which contains \(a_i\) but not \(a_{i+1}\).
  In such case we obtain a nontrivial \(1\)-separation
  \(((W_i \cup W_{i+1}\setminus V(D)) \cap V(H^0), (W_{i+2} \cup U \cup V(D)) \cap V(H^0))\) of \(H^0\), which is a contradiction.
  Hence~\ref{itm:connected} holds.
\end{cproof}

\begin{claim}\label{clm:contained-ladder}
  Among any \((2m+2)(4k+1)+3\) consecutive indices in
  \(\set{i_0, \dots, i_0 + \lambda_0}\) we can find an index \(i\) such that
  some component of \(H\) has all its vertices contained in \(W_i\).
\end{claim}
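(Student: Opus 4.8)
The plan is to argue by contradiction using Claim~\ref{clm:cc} to manufacture a long path and then to see that such a path cannot fit into a single component of $H$ (a mere subdivision of $L_k$).

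\smallskip
\noindent\emph{Step 1: a long path from the connector paths.}
Suppose that among some $(2m+2)(4k+1)+3$ consecutive indices $I=\{j,\dots,j+(2m+2)(4k+1)+2\}\subseteq\{i_0,\dots,i_0+\lambda_0\}$ there is no index $i$ with a component of $H$ contained in $W_i$. For each of the first $(2m+2)(4k+1)+1$ indices $i$ of $I$, all of $i,i+1,i+2$ lie in $I$, so item~\ref{itm:contained} of Claim~\ref{clm:cc} is impossible and item~\ref{itm:connected} gives a path $R_i$ in $H$ from $a_i$ to $a_{i+1}$ with all vertices in $W_i$. Since consecutive bundles meet only in the single vertex $a_{i+1}$ and non-consecutive bundles are disjoint, the concatenation $P^\ast:=R_j\cup R_{j+1}\cup\cdots$ is a path in $H$; as $H$ is a disjoint union of subgraphs each isomorphic to a subdivision of $L_k$, all of $P^\ast$ lies in one such component $H_t$. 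Note that $P^\ast$ runs monotonically through bundles, meeting $a_j,a_{j+1},\dots$ in order, and has a vertex in each of the $(2m+2)(4k+1)+1$ bundles it spans.

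\smallskip
\noindent\emph{Step 2: chopping $P^\ast$ by the ladder structure.}
Write $\beta(v)$ for the index of the bundle containing $v\in V(P^\ast)$; then $\beta$ is non-decreasing along $P^\ast$ and increases by exactly one at each vertex $a_i$. The (at most $2k$) vertices of $H_t$ that are preimages of vertices of $L_k$ appear along $P^\ast$ in an order forming a path in $L_k$, so cutting $P^\ast$ at these vertices yields at most $2k+1\le 4k+1$ \emph{segments}, each contained in a single subdivided edge of $L_k$. Each segment occupies a contiguous block of bundle indices, and since the $(2m+2)(4k+1)+1$ bundles are covered by at most $4k+1$ segments, some segment $\sigma$ spans at least $2m+3$ bundles. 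The goal is now to contradict this by showing that no segment spans more than $2m+2$ bundles.

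\smallskip
\noindent\emph{Step 3: the segment bound (the crux).}
Fix $\sigma$, inside a subdivided edge $\widetilde e$ with structural endpoints $u,v$, spanning $W_{\ell_1},\dots,W_{\ell_2}$. Then $\sigma$ passes in order through the cut vertices $a_{\ell_1+1},\dots,a_{\ell_2}$ of $C$, and all but at most two of these lie in the interior of $\sigma$, hence in the interior of $\widetilde e$, hence have degree $2$ in $H_t$. For such an interior crossing $a_i$: since $H_t$ is $2$-connected, $a_i$ is not a cut vertex of $H_t$, so there is a path $\rho_i$ in $H_t-a_i$ joining the two $\widetilde e$-neighbours of $a_i$; but the block-tree structure of $C$ (the identity $(W_0\cup\cdots\cup W_{i-1})\cap(W_i\cup\cdots\cup W_\lambda)=\{a_i\}$) shows $a_i$ separates these two neighbours in $C$, so $\rho_i\not\subseteq C$ and $\rho_i$ meets $Z$. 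Moreover, by Claim~\ref{cl:clean-subpath}\ref{itm:nomarked} the clean range $W_{i_0}\cup\cdots\cup W_{i_0+\lambda_0}$ contains no marked vertex and (again by the bundle structure) it attaches to the rest of $C$ only through the portal cut vertices $a_{i_0}$ and $a_{i_0+\lambda_0+1}$; since $a_i$ lies strictly between the portals in the bundle order, $\rho_i$ must exit the clean range at $a_{i_0}$ and re-enter at $a_{i_0+\lambda_0+1}$. Associating to each interior crossing $a_i$ a bounded certificate drawn from $Z\cup\{a_{i_0},a_{i_0+\lambda_0+1}\}$ (the vertices of $Z$ first reached on each side of the detour, together with how it uses the portals) should cap the number of bundles $\sigma$ spans by roughly $2(|Z|+2)\le 2(m+1)=2m+2$, contradicting the bound $\ge 2m+3$ from Step 2 and proving the claim.

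\smallskip
\noindent\emph{The main obstacle} is precisely Step 3. The difficulty is that the detours $\rho_i$ for different crossings may share their entire portion outside the clean range — a fixed $a_{i_0}$--$a_{i_0+\lambda_0+1}$ path in $H_t$ — so a naive pigeonhole on the $Z$-vertices appearing on the $\rho_i$ is not enough. To make the count work one has to use more of the structure: the part $H_t'$ of $H_t$ lying in the clean range is itself a subdivided sub-ladder meeting the rest of $H_t$ only at the two portals, and inside $H_t'$ each $a_i$ \emph{is} a genuine cut vertex; analysing how $a_i$ partitions $H_t'$ — and hence which neighbours of $Z$ can be used on the left side versus the right side of each detour — is what should tie the $Z$-budget linearly to the number of crossings. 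I expect this bookkeeping, rather than Steps 1 and 2, to be the technical heart of the proof.
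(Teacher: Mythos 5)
Your Steps 1--2 essentially reproduce the start of the paper's argument (concatenate the $a_i$--$a_{i+1}$ paths furnished by Claim~\ref{clm:cc} into a long path inside one component of $H$, then pigeonhole over the at most $2k$ branch/$X$-vertices, which meet at most $4k$ bundles). The gap is Step 3, and it is not merely unfinished bookkeeping: the bound you are aiming for (a segment inside one subdivided edge spans at most about $2m+2$ bundles) is not provable from $2$-connectivity of $H_t$, the bundle structure of $C$, and the budget $|Z|\le m-1$ alone. Nothing prevents a single subdivided edge of $H_t$ from snaking through arbitrarily many bundles: every interior crossing $a_i$ can use one and the same detour (the rest of the ladder), which exits the clean range once through each portal and meets $Z$ only a bounded number of times, and the minimum-degree-$3$ requirement coming from $3$-connectivity of $G$ can be satisfied by $G$-edges (not $H$-edges) from the interior vertices of the path to a single $z\in Z$ --- such edges create no marked vertices, since marking of type (a) concerns edges of $H$ and type (b) concerns vertices outside $V(H)$. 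This is exactly the fan-like configuration addressed by Lemma~\ref{lem:general-fan}: it genuinely occurs in $3$-connected graphs, so no ``certificate'' pigeonhole can tie the $Z$-budget linearly to the number of crossings; your worry about shared detours is fatal, not a technicality. (Also, your claim that the part of $H_t$ inside the clean range is a subdivided sub-ladder attached only at the portals is unjustified --- it may be a disjoint union of paths.)

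What actually rules out the configuration in the paper is the minimality of the counterexample, which your proposal never uses. After the same pigeonhole as your Step 2, the paper fixes $2m+2$ consecutive bundles containing no vertex of degree $3$ in $H^0$ and no vertex of $V(H^0)\cap X$, so that inside each such $W_i$ the component $H^0$ is exactly the subpath $a_iPa_{i+1}$; then, using Claim~\ref{cl:deg3}, the absence of marked vertices from Claim~\ref{cl:clean-subpath}, and the avoidance of $X$, it shows this subpath contains \emph{all} of $W_i$, is an induced path in $G$ (by minimality of $|V(H)|$), and that every internal vertex has all its neighbours in the path or in $Z$. Lemma~\ref{lem:general-fan} then produces an edge $e$ of this path with $G/e$ still $3$-connected, while $H^0/e$ is still a subdivision of $L_k$, contradicting the minor-minimal choice of $G$ and the minimality of $|V(H)|$. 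Without this contraction step, or some other appeal to minimality, your Step 3 cannot be closed, so the proposal as it stands does not prove Claim~\ref{clm:contained-ladder}.
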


\begin{cproof}
  Suppose to the contrary that there is an index \(\alpha\) with \(i_0 \le \alpha\) and \(\alpha + (2m+2)(4k+1) +2\le i_0 + \lambda_0\) such that for every \(i \in \set{\alpha, \dots, \alpha + (2m+2)(4k+1)+2}\) there is no \(j \in \set{1, \dots, N}\) with \(V(H_j)\subseteq W_i\).

  By Claim~\ref{clm:cc} applied to all indices \(i \in \set{\alpha, \dots, \alpha + (2m+2)(4k+1)}\), we can fix a component \(H^0\) of \(H\) and a path \(P\) contained in \(H^0\) with endpoints
  in \(a_\alpha\) and \(a_{\alpha + (2m+2)(4k+1) + 1}\) which traverses the vertices
  \(a_\alpha, \dots, a_{\alpha + (2m+2)(4k+1) + 1}\) in that order and the subpath of \(P\)
  from \(a_i\) to \(a_{i+1}\) has all its vertices in \(W_i\) for
  \(i \in \set{\alpha, \dots, \alpha + (2m+2)(4k+1)}\).

  The graph \(H^0\) contains \(2k\) vertices which are of degree \(3\) or belong to \(X\).
  Such vertices lie in at most \(4k\) of the sets \(W_{\alpha}\), \dots, \(W_{\alpha + (2m+2)(4k+1)}\).
  Hence there exists an index \(\alpha'\) with \(\alpha \le \alpha'\) and
  \(\alpha' + (2m+1) \le \alpha + (2m+2)(4k+1)\)
  such that \(W_{\alpha'} \cup \dots \cup W_{\alpha' + (2m+1)}\) does not contain
  a vertex of degree \(3\) in \(H^0\) nor a vertex from \(V(H^0) \cap X\).
  This in particular implies that the subgraph of \(H^0\) induced by those vertices which lie
  in \(W_i\) is the subpath \(a_i P a_{i+1}\) for \(i \in \set{\alpha', \dots, \alpha' + 2m+1}\).
  Let us fix such an index \(\alpha'\).

  We claim that actually \(a_i P a_{i+1}\) contains all vertices in \(W_i\)
  for \(i \in \set{\alpha', \dots, \alpha' + 2m+1}\).
  Suppose to the contrary that for some index \(i\) there is a vertex \(v \in W_i \setminus V(a_i P a_{i+1})\).
  We consider two cases: when \(v \in V(H)\) and when \(v \not \in V(H)\).
  In the former case, the component of \(H\) containing \(v\) must contain all its vertices in \(W_i\)
  as \(W_i\) has no marked vertices and \(\set{a_i, a_{i+1}} \subseteq V(P) \subseteq V(H^0)\),
  and this is a contradiction.
  In the latter case, by Claim~\ref{cl:deg3}, every neighbor of \(v\) lies in
  \(X\).
  But \(v\) cannot have neighbors in \(X \cap Z\) because \(W_i\) does not contain
  marked vertices, and \(X \cap W_i = \emptyset\) by our choice of \(\alpha'\).
  Hence \(v\) must be an isolated vertex in \(G\), contradicting its \(3\)-connectedness.
  This completes the proof that \(a_i P a_{i+1}\) contains all vertices in \(W_i\).

  Note finally that \(a_i P a_{i+1}\) is an induced path in \(G\), as otherwise we could
  replace the component \(H^0\) of \(H\) with a subdivision of \(L_k\) whose vertex set is a proper subset of
  \(V(H^0)\) thus contradicting the minimality of $|V(H)|$.

  Hence \(P':=a_{\alpha'} P a_{\alpha'+2m+2}\) is an induced path in \(G\) of length at least $2m+2 \geq 2|Z|+3$ such that each internal vertex of $P'$ has all its neighbors in \(V(P') \cup Z\).
  By Lemma~\ref{lem:general-fan}, there is an edge \(e\) of $P'$ whose contraction preserves the \(3\)-connectivity of \(G\).
  As \(H^0/e\) is still isomorphic to a subdivision of \(L_k\), we get a contradiction to the minimality of $|V(H)|$.
  The lemma follows.
\end{cproof}

By Claim~\ref{clm:contained-ladder} and the choice of \(\lambda_0\), there exists a set of indices \(I\) with \(|I| \geq m^2 + 2k^3(m-1)+1\) with the property that for every \(i \in I\), there exists \(j \in \set{1, \dots, N}\) such that \(H_j \subseteq W_i\), and for distinct \(i, i' \in I\), the sets \(W_i\) and \(W_{i'}\) are disjoint, that is, \(|i'-i| \geq 2\).

For each \(i \in I\), let \(H^i\) be one of the graphs \(H_1\), \dots, \(H_N\) that is contained in \(W_i\).
For every \(i \in I\), fix a linkage $Q_1^i, Q_2^i, Q_3^i$ from $\{a_i, a_{i+1}\} \cup Z$ to $V(H^i)$ in which the number of paths having an endpoint in \(\set{a_i, a_{i+1}}\) is largest possible.
Note that $V(Q_j^i) \subseteq W_i \cup Z$ for $j=1,2,3$.
We can classify the linkages $Q_1^i, Q_2^i, Q_3^i$ by whether zero, one or two of the paths have and endpoint in $\{a_i, a_{i+1}\}$.
For $j \in \set{0, 1, 2}$, define $I_j \subseteq I$ to be the subset of indices $i\in I$ such that exactly $j$ of the paths $Q_1^i, Q_2^i, Q_3^i$ have an endpoint in \(\set{a_i, a_{i+1}}\).

\begin{claim}\label{cl:I0bded}
$I_0 = \emptyset$.
\end{claim}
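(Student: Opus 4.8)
We argue by contradiction. My plan is to suppose $I_0\neq\emptyset$, fix $i\in I_0$, and then exhibit a linkage from $\{a_i,a_{i+1}\}\cup Z$ to $V(H^i)$ in which some path has an endpoint in $\{a_i,a_{i+1}\}$; this contradicts the fact that $Q_1^i,Q_2^i,Q_3^i$ were chosen so that the number of paths with an endpoint in $\{a_i,a_{i+1}\}$ is largest possible, which is $0$ since $i\in I_0$. First I would record the structure forced by $i\in I_0$: each $Q_j^i$ runs from a vertex $z_j\in Z$ to a vertex $w_j\in V(H^i)$, the three $z_j$ are distinct and so are the three $w_j$, and — since no $Q_j^i$ ends in $\{a_i,a_{i+1}\}$ and internal vertices of a linkage path avoid the terminal set $\{a_i,a_{i+1}\}\cup Z$ — neither $a_i$ nor $a_{i+1}$ lies on any $Q_j^i$.

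The key move is to exploit connectivity of the bundle. Recall that $G[W_i]$ is connected, and it contains both $\{a_i,a_{i+1}\}$ and $V(H^i)$, so I would fix a shortest path $R$ in $G[W_i]$ from $\{a_i,a_{i+1}\}$ to $V(H^i)$, with first endpoint $a\in\{a_i,a_{i+1}\}$ and last endpoint $w\in V(H^i)$. By minimality $R$ has no internal vertex in $\{a_i,a_{i+1}\}\cup V(H^i)$, and since $V(R)\subseteq W_i$ and $W_i\cap Z=\emptyset$, it avoids $Z$ entirely. Let $p$ be the first vertex of $R$, traversed from $a$, lying in $V(H^i)\cup V(Q_1^i)\cup V(Q_2^i)\cup V(Q_3^i)$ (such $p$ exists, $w$ being one candidate). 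If $p\in V(H^i)$, then $p=w$ is the last vertex of $R$, so $R$ is internally disjoint from all three $Q_j^i$ and hence meets at most one of them, only at $w$; discarding that one $Q_j^i$ (if any) and keeping the other two, the two kept paths together with $R$ form a linkage from $\{a_i,a_{i+1}\}\cup Z$ to $V(H^i)$ containing a path (namely $R$) with an endpoint in $\{a_i,a_{i+1}\}$. Otherwise $p$ is an internal vertex of exactly one path $Q_\ell^i$ (it is not in $V(H^i)$ by the case assumption and not in $Z$), and I would set $P^*$ to be the concatenation of the subpath of $R$ from $a$ to $p$ with the subpath of $Q_\ell^i$ from $p$ to $w_\ell$; this $P^*$ is a path from $a\in\{a_i,a_{i+1}\}$ to $w_\ell\in V(H^i)$, internally disjoint from $\{a_i,a_{i+1}\}\cup Z\cup V(H^i)$, and vertex-disjoint from the two remaining paths $Q_j^i$ with $j\neq\ell$, so together with those two it forms the desired linkage. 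In both cases the maximality of $Q_1^i,Q_2^i,Q_3^i$ is contradicted, whence $I_0=\emptyset$.

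I do not expect a genuine obstacle here: the construction is short, and the only point needing care is the routine verification that the rerouted path ($R$, respectively $P^*$) is internally disjoint from the two terminal sets $\{a_i,a_{i+1}\}\cup Z$ and $V(H^i)$, together with the check that its endpoint in $V(H^i)$ differs from those of the two paths kept from the original linkage. This is exactly why $R$ should be taken as a shortest $\{a_i,a_{i+1}\}$-to-$V(H^i)$ path inside the connected graph $G[W_i]$, and why one stops $R$ at the first vertex it shares with $V(H^i)\cup\bigcup_j V(Q_j^i)$.
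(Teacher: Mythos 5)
Your proposal is correct and follows essentially the same route as the paper: both take a path $R$ inside the connected bundle $W_i$ from $\{a_i,a_{i+1}\}$ toward $H^i\cup Q_1^i\cup Q_2^i\cup Q_3^i$, observe it can meet at most one of the linkage paths, and reroute that one path to obtain a linkage with a path ending in $\{a_i,a_{i+1}\}$, contradicting the maximality in the choice of $Q_1^i,Q_2^i,Q_3^i$. Your version merely makes the first-hit/shortest-path bookkeeping explicit where the paper handles it with a brief ``without loss of generality.''
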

\begin{cproof}
Suppose to the contrary that there exists \(i \in I_0\).
Let \(R\) be a path from \(\set{a_i, a_{i+1}}\) to \(V(H^i \cup Q^i_1 \cup Q^i_2 \cup Q^i_3)\) in \(W_i\).
The path \(R\) intersects at most one of the paths \(Q^i_1\), \(Q^i_2\), \(Q^i_3\), so without loss of generality we may assume that \(R\) does not intersect \(Q^i_1 \cup Q^i_2\).
In \(Q^i_3 \cup R\) we can find a path \(Q'\) from \(\set{a_i, a_{i+1}}\) to \(H^i\).
The path \(Q'\) does not intersect \(Q^i_1\) nor \(Q^i_2\), so after replacing \(Q^i_3\) with \(Q'\) we obtain a linkage with one more path with an endpoint in \(\set{a_i, a_{i+1}}\), contradiction.
\end{cproof}

\begin{claim}\label{cl:5}
$|I_1| \le m^2$.
\end{claim}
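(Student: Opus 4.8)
The plan is to argue by contradiction: assuming $|I_1|\ge m^2+1$, I would produce an $L_{k+1}$ minor in $G$, contradicting the choice of $G$. First I would fix $m^2+1$ indices $i_1<\dots<i_{m^2+1}$ in $I_1$; since consecutive elements of $I$ differ by at least $2$, the vertices $a_{i_j}$ and $a_{i_j+1}$ are all distinct and, being vertices of $W_{i_j}$, they lie in the clean region $W_{i_0}\cup\dots\cup W_{i_0+\lambda_0}$ and are therefore unmarked. For each chosen $i$, write $Q_1^i$ for the unique path of the linkage that ends at a vertex $b_i\in\{a_i,a_{i+1}\}$, and $Q_2^i,Q_3^i$ for the two paths ending at distinct vertices $z_2^i,z_3^i\in Z$; recall that $V(Q_t^i)\subseteq W_i\cup Z$ and that $Q_t^i$ meets $H^i$ only at its endpoint $q_t^i\in V(H^i)$.

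Next I would assemble a forest $F$ with at most $m$ components on which to run Lemma~\ref{lem:new-building-ladder} with parameters $k$ and $m$. For each $z\in Z$ let $F_z$ be the union of $\{z\}$ with all paths $Q_2^i,Q_3^i$ (over the chosen indices) that end at $z$; any two of these paths meet only at $z$, so $F_z$ is a tree, and since $V(Q_t^i)\setminus\{z\}\subseteq W_i$, the tree $F_z$ meets each $H^i$ in at most the single vertex $q_t^i$ (and at most one of $Q_2^i,Q_3^i$ lands in a given $F_z$, as $z_2^i\ne z_3^i$). This uses at most $|Z|\le m-1$ components. The last component is a \emph{backbone tree} $T$: a connected subgraph of $C$ containing all the $b_i$, disjoint from every $Q_2^i\cup Q_3^i$ and from the interior of every $Q_1^i$, and meeting each $H^i$ in at most one vertex. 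Granting such a $T$, the graph $T':=T\cup\bigcup_i Q_1^i$ is connected, meets each $H^i$ in the single vertex $q_1^i$, and is vertex-disjoint from every $F_z$; hence $F:=T'\cup\bigcup_{z\in Z}F_z$ is a forest with at most $m$ components, and each chosen $H^i$ meets exactly the three components $F_{z_2^i}$, $F_{z_3^i}$, $T'$, each in one vertex. Lemma~\ref{lem:new-building-ladder} then gives an $L_{k+1}$ minor, a contradiction, so $|I_1|\le m^2$.

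The main obstacle is the construction of $T$. The natural candidate is a piece of the realisation of the backbone $B_0a_1B_1\cdots a_\lambda B_\lambda$ inside $C$: between two consecutive chosen indices every bundle is free of chosen ladders and of chosen linkage-paths, so the backbone route causes no harm there. The difficulty is that for a chosen index $i$ strictly between the first and last one, the route is forced to cross $W_i$ from $a_i$ to $a_{i+1}$ — necessarily through the block $B_i$ — and there it might run into $H^i$, $Q_2^i$ or $Q_3^i$. To reroute inside $W_i$ I would exploit the maximality defining $I_1$: if $V(H^i)\cup V(Q_1^i)\cup V(Q_2^i)\cup V(Q_3^i)$ separated $a_i$ from $a_{i+1}$ in $W_i$ — i.e. no clean $a_i$–$a_{i+1}$ path existed — then by Menger's theorem there would be a path from the vertex of $\{a_i,a_{i+1}\}\setminus\{b_i\}$ to $V(H^i)$ avoiding two of $Q_1^i,Q_2^i,Q_3^i$, and swapping it into the linkage would give a linkage with \emph{two} paths ending in $\{a_i,a_{i+1}\}$, impossible for $i\in I_1$. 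Thus each $W_i$ can be crossed cleanly, and concatenating these crossings with the harmless backbone pieces through the intermediate bundles (and treating the first and last chosen bundle, where no crossing is needed) yields $T$. The delicate points to make fully rigorous are exactly where the Menger rerouting path is permitted to live and the corner cases in which $a_i$ or $a_{i+1}$ already lies on $H^i$ or on $Q_1^i$.
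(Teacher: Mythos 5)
Your overall strategy is the same as the paper's: build a forest consisting of the $Z$-rooted linkage paths (grouped by their endpoint in $Z$) together with one backbone component absorbing the paths $Q_1^i$, and then invoke Lemma~\ref{lem:new-building-ladder} with at most $|Z|+1\le m$ components. The gap is in the one step you flag as delicate, and it is not a mere technicality: your construction of the backbone tree $T$ rests on the claim that for each $i\in I_1$ there is an $a_i$--$a_{i+1}$ path in $W_i$ internally avoiding \emph{all} of $V(H^i)\cup V(Q_1^i)\cup V(Q_2^i)\cup V(Q_3^i)$, and the justification you give for it does not work. The non-existence of such a ``clean'' crossing is a separation statement between $a_i$ and $a_{i+1}$ inside $W_i$; Menger applied to it does not produce any path from $\{a_i,a_{i+1}\}\setminus\{b_i\}$ into $V(H^i)$, let alone one avoiding two of the linkage paths. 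Worse, the path you would need for the swapping argument --- from the free vertex of $\{a_i,a_{i+1}\}$ to $H^i$ avoiding $Q_1^i$ and one of $Q_2^i,Q_3^i$ --- is exactly the object whose existence the maximality defining $I_1$ rules out, so it cannot be extracted from that maximality; the inference is circular as well as unsupported. As written, the existence of a fully clean crossing is simply not established.

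What the maximality does give (and what the paper's proof uses) is weaker but sufficient: any path $R_i$ in $W_i$ from $\{a_i,a_{i+1}\}\setminus V(Q_1^i)$ to $V(H^i\cup Q_1^i\cup Q_2^i\cup Q_3^i)$ must end on $Q_1^i$, since otherwise one of $Q_2^i,Q_3^i$ could be rerouted to the free vertex of $\{a_i,a_{i+1}\}$, giving a linkage with two endpoints there. Hence $Q_1^i\cup R_i$ is a tree containing $a_i$, $a_{i+1}$ and meeting $H^i$ only in the endpoint of $Q_1^i$, and the backbone can cross $W_i$ inside $Q_1^i\cup R_i$. Since you merge the paths $Q_1^i$ into the backbone component anyway (your $T'=T\cup\bigcup_i Q_1^i$), there is no need for $T$ to avoid the interiors of the $Q_1^i$: dropping that requirement and routing through $Q_1^i\cup R_i$ repairs your argument and turns it into essentially the proof in the paper. (A small additional slip: you require $T$ to meet each $H^i$ in at most one vertex and then assert $T'$ meets $H^i$ only in $q_1^i$; for that you need $T\cap V(H^i)\subseteq\{q_1^i\}$, which the $Q_1^i\cup R_i$ routing does give you.)
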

\begin{cproof}
Without loss of generality, assume that for every $i \in I_1$, the path $Q_1^i$ has an endpoint in $\{a_i, a_{i+1}\}$.  For every $i \in I_1$, fix $R_i$ to be a path from $\{a_i, a_{i+1}\} \setminus V(Q_1^i)$ to $V(H^i \cup Q_1^i \cup Q_2^i \cup Q_3^i)$ in $W_i$.
Since the linkage \(Q^i_1\), \(Q^i_2\), \(Q^i_3\) maximizes the number of endpoints in \(\set{a_i, a_{i+1}}\), the path \(R_i\) must have an endpoint in \(Q^i_1\), as otherwise we would reroute one of the paths \(Q^i_2\) or \(Q^i_3\) to  $\{a_i, a_{i+1}\} \setminus V(Q_1^i)$.
Thus, \(Q^i_1 \cup R_i\) is a tree, which contains \(a_{i}\), \(a_{i+1}\) and a vertex of \(H^i\).
Let \(P\) be an \(a_1\)--\(a_\lambda\) path in \(G - Z\) such that for every \(i \in I_1\), we have \(P[W_i \cap V(P)] \subseteq Q_1^i \cup R_i\).

Let \(F\) be the union of \(P\) and all paths \(Q^i_j\) with \(i \in I_1\) and \(j \in \set{1, 2, 3}\).
Every component of \(F\) is either a union of paths having a common endpoint in a vertex from \(Z\) but otherwise disjoint, or the union of \(P\) and paths \(Q_1^i\) with \(i \in I_1\).
Thus \(F\) has at most \(|Z|+1\) components, and each \(H^i\) with \(i \in I_1\) intersects three components of \(F\), each in one vertex.
The graph \(G\) does not have an \(L_{k+1}\) minor,
so by Lemma~\ref{lem:new-building-ladder} applied to the graphs \(H_i\) with \(i \in I_1\), we have \(|I_1| \le {(|Z| + 1)}^2 \le {m}^2\).
\end{cproof}

By Claims \ref{cl:I0bded} and \ref{cl:5} and the bound on $|I|$, we see that $|I_2| > 2k^3(m-1)$.
Without loss of generality, for all $i \in I_2$, assume that $Q_1^i$ has $a_i$ as an endpoint and $Q_2^i$ has $a_{i+1}$ as an endpoint.  It follows that $Q_3^i$ has an endpoint in $Z$.
For every $i \in I_2$, fix an \(L_k\)-model \(\phi^i\) of \(L_k\) in \(H^i\), and for each \(j \in \set{1, 2, 3}\), let $t(i, j)$ be the index such that $Q_j^i$ has an endpoint in $\phi^i((1, {t(i,j)})) \cup \phi^i((2, {t(i,j)}))$.
Thus, $t(i,j)\in \{1, \dots, k\}$ for all $i$ and $j$.

As \(|I_2| > 2k^3(m-1)\), there exist indices \(i\), \(i'\) and \(i''\) in \(I_2\) with \(i < i' < i''\) and a vertex \(z \in Z\) such that
$t(i, j) = t(i', j) = t(i'', j)$ for all $j \in \set{1, 2, 3}$,
and the paths $Q_3^i$, \(Q_3^{i'}\) and $Q_3^{i''}$ all have $z$ as an endpoint.
Using the symmetries of a ladder, we may assume that $t(i,1) \le t(i,2)$, and thus $t(i', 1) \le t(i', 2)$ and \(t(i'', 1) \le t(i'', 2)\).

There are now three cases to consider: $t(i,3) \le t(i,1)$, $t(i,1) < t(i,3) < t(i,2)$, and $t(i,2) \le t(i,3)$.  In each case, we find two paths $R_1$ and $R_2$ linking $H^i$ and $H^{i''}$ such that by joining a rooted ladder minor in $H^i$ to a rooted ladder minor in $H^{i''}$, we show that $G$ has an $L_{k+1}$ minor, yielding a contradiction.
The rooted ladder minors will be obtained by applying Lemma~\ref{lem:rooted-ladder-from-ladder}. \\

\begin{figure}[!h]
  \centering
  \includegraphics[scale=0.8]{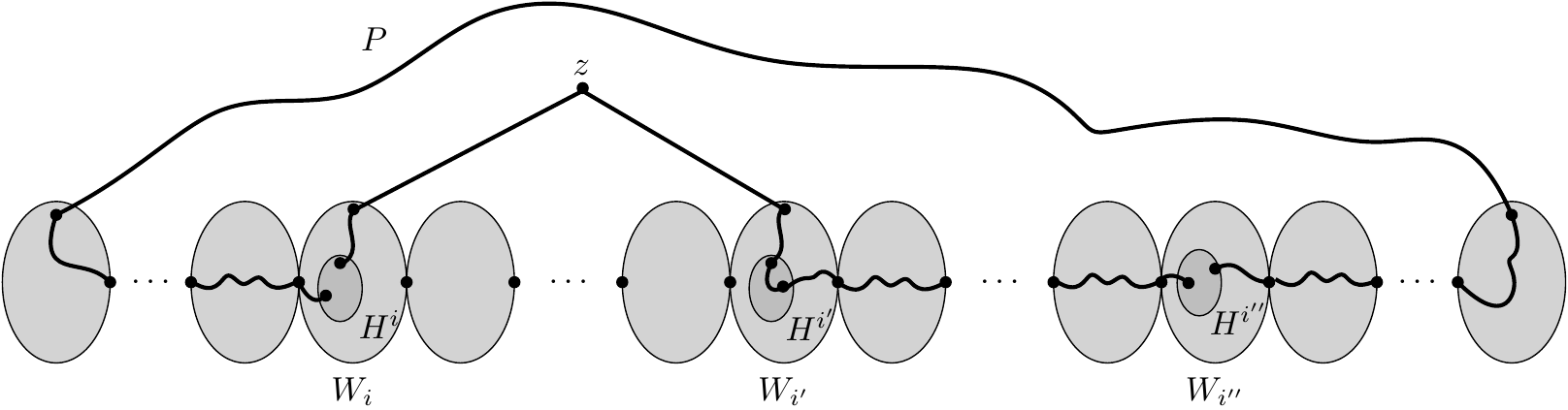}
  \caption{\label{fig:end_of_proof-case_1}
  Illustration of Case~1.
  The paths $R_1$ and $R_2$ are depicted in bold.}
\end{figure}

\textbf{Case 1: $t(i,3) \le t(i,1)$.}
See Figure~\ref{fig:end_of_proof-case_1} for an illustration of this case.
Let $R_1$ be a \(V(H^{i})\)--\(V(H^{i''})\) path contained in
\(V(Q_3^i \cup Q_3^{i'} \cup H^{i'} \cup Q_2^{i'}) \cup W_{i'+1} \cup \dots \cup W_{i''-1} \cup V(Q_1^{i''})\).
Thus, $R_1$ links a vertex of $\phi^i((1, {t(i, 3)})) \cup \phi^i((2, t(i, 3)))$ to a vertex of $\phi^{i''}((1, t(i'', 1))) \cup \phi^{i''}((2, t(i'', 1)))$ and is internally disjoint from $H^i \cup H^{i''}$.
The path \(R_1\) has only one vertex not contained in \(C\), namely \(z\). Hence by our choice of \(i_0\), there exists a path \(P\) between \(W_0 \cup \dots \cup W_{i_0-1}\) and \(W_{i_0+\lambda_0 + 1} \cup \dots \cup W_\lambda\) that is internally disjoint from \(C\) and does not contain the vertex \(z\).
Let $R_2$ be a \(V(H^{i})\)--\(V(H^{i''})\) path contained in \(V(Q_1^i) \cup W_0 \cup \dots \cup W_{i-1} \cup V(P) \cup W_{i'' + 1} \cup \dots \cup W_{\lambda} \cup V(Q^{i''}_2)\).
Thus, the path $R_2$ links a vertex of $\phi^i((1, t(i, 1))) \cup \phi^i((2, t(i, 1)))$ to a vertex of $\phi^{i''}((1, t(i'', 2))) \cup \phi^{i''}((2, t(i'', 2)))$ and is internally disjoint from $H^i \cup H^{i''}$ and completely disjoint from $R_1$. By Lemma~\ref{lem:rooted-ladder-from-ladder}, $H^i$ contains an $L_{k-t(i,1)+1}$-model rooted on the endpoints of $R_1$ and $R_2$, and $H^{i''}$ has an $L_{t(i'',1)}$-model rooted on the endpoints of $R_1$ and $R_2$.  Together, we see that $G$ contains an $L_{k+1}$ minor.\\

\textbf{Case 2: $t(i,1) < t(i,3) < t(i,2)$.}  Let $R_1$ be the path formed by the union of $Q_3^i$ and $Q_3^{i''}$.
Thus, $R_1$ links a vertex of $\phi^i((1, t(i, 3))) \cup \phi^i((2, t(i, 3)))$ to a vertex of $\phi^{i''}((1, t(i'', 3))) \cup \phi^{i''}((2, t(i'', 3)))$ and is internally disjoint from $H^i \cup H^{i''}$.
Let $R_2$ be a \(V(H^i)\)--\(V(H^{i''})\) path contained in \(V(Q_2^{i}) \cup W_{i+1} \cup \dots \cup W_{i''-1} \cup V(Q_1^{i''})\). The path $R_2$ links a vertex of $\phi^{i}((1, t(i, 2))) \cup \phi^i((2, t(i, 2)))$ to a vertex of $\phi^{i''}((1, t(i'', 1))) \cup \phi^{i''}((2, t(i'', 1)))$.  By Lemma \ref{lem:rooted-ladder-from-ladder}, $H_i$ contains an $L_{t(i, 3)}$-model rooted on the endpoints of $R_1$ and $R_2$, and $H_{i''}$ contains an $L_{k-t(i'',3)+1}$-model rooted on the endpoints of $R_1$ and $R_2$, implying that $G$ contains an $L_{k+1}$ minor.\\

\textbf{Case 3: $t(i,2) \le t(i,3)$.}  Let $R_1$ be a \(V(H^i)\)--\(V(H^{i''})\) path contained in \(V(Q_2^i) \cup W_{i+1} \cup \dots \cup W_{i'-1} \cup V(Q_1^{i'} \cup H^{i'} \cup Q_3^{i'} \cup Q_3^{i''})\).
Thus, $R_1$ links a vertex of $\phi^i((1, {t(i, 2)})) \cup \phi^i((2, t(i, 2)))$ to a vertex of $\phi^{i''}((1, t(i'', 3))) \cup \phi^{i''}((2, t(i'', 3)))$ and is internally disjoint from $H^i \cup H^{i''}$.
The path \(R_1\) has only one vertex not contained in \(C\), namely \(z\). Hence by our choice of \(i_0\), there exists a path \(P\) between \(W_0 \cup \dots \cup W_{i_0-1}\) and \(W_{i_0+\lambda_0 + 1} \cup \dots \cup W_\lambda\) which is internally disjoint from \(C\) and does not contain the vertex \(z\).
Let $R_2$ be a \(V(H^{i})\)--\(V(H^{i''})\) path contained in \(V(Q_1^i) \cup W_0 \cup \dots \cup W_{i-1} \cup V(P) \cup W_{i'' + 1} \cup \dots \cup W_{\lambda} \cup V(Q^{i''}_2)\).
Thus, the path $R_2$ links a vertex of $\phi^i((1, t(i, 1))) \cup \phi^i((2, t(i, 1)))$ to a vertex of $\phi^{i''}((1, t(i'', 2))) \cup \phi^{i''}((2, t(i'', 2)))$ and is internally disjoint from $H^i \cup H^{i''}$ and completely disjoint from $R_1$. By Lemma~\ref{lem:rooted-ladder-from-ladder}, $H^i$ contains an $L_{k-t(i,2)+1}$-model of rooted on the endpoints of $R_1$ and $R_2$, and $H^{i''}$ has an $L_{t(i'',2)+1}$-model rooted on the endpoints of $R_1$ and $R_2$.  Together, we see that $G$ contains an $L_{k+1}$ minor.\\

In each case, we showed that $G$ contains an $L_{k+1}$ minor, and thus, in each case we arrive at a contradiction to our assumptions on the graph $G$, completing the proof of the theorem.
\end{proof}

\section*{Acknowledgements}

We are much grateful to the three anonymous referees for their careful reading and very helpful comments.

\bibliographystyle{plain}
\bibliography{ladders-dimension}

\begin{thebibliography}{10}

\bibitem{AES87}
Kiyoshi Ando, Hikoe Enomoto, and Akira Saito.
\newblock Contractible edges in 3-connected graphs.
\newblock {\em Journal of Combinatorial Theory, Series B}, 42(1):87--93, 1987.

\bibitem{Diestel5thEdition}
Reinhard Diestel.
\newblock {\em Graph Theory}.
\newblock Springer Publishing Company, Incorporated, 5th edition, 2017.

\bibitem{ES35}
P.~Erd\H{o}s and G.~Szekeres.
\newblock A combinatorial problem in geometry.
\newblock {\em Compositio Math.}, 2:463--470, 1935.

\bibitem{GS21+}
Maximilian Gorsky and Michał~T. Seweryn.
\newblock Posets with $k$-outerplanar cover graphs have bounded dimension.
\newblock \href{http://arxiv.org/abs/2103.15920}{arXiv:2103.15920}.

\bibitem{H69a}
R.~Halin.
\newblock Untersuchengen \"uber minimale $n$-fach zusammenh\"angende graphen.
\newblock {\em Mathematische Annalen}, 182:175--188, 1969.

\bibitem{H69b}
R.~Halin.
\newblock Zur theorie der $n$-fach zusammenh\"agenden graphen.
\newblock {\em Abh. Math. Sem. Univ. Hamburg}, 33:133--164, 1969.

\bibitem{HSTWW19}
David~M. Howard, Noah Streib, William~T. Trotter, Bartosz Walczak, and Ruidong
  Wang.
\newblock Dimension of posets with planar cover graphs excluding two long
  incomparable chains.
\newblock {\em Journal of Combinatorial Theory, Series A}, 164:1--23, 2019.

\bibitem{JMOdMW19}
Gwena\"el Joret, Piotr Micek, Patrice~Ossona de~Mendez, and Veit Wiechert.
\newblock Nowhere dense graph classes and dimension.
\newblock {\em Combinatorica}, 39(5):1055--1079, 2019.
\newblock \href{http://arxiv.org/abs/1708.05424}{arXiv:1708.05424}.

\bibitem{JMMTWW}
Gwena\"{e}l Joret, Piotr Micek, Kevin~G. Milans, William~T. Trotter, Bartosz
  Walczak, and Ruidong Wang.
\newblock Tree-width and dimension.
\newblock {\em Combinatorica}, 36(4):431--450, 2016.
\newblock \href{http://arxiv.org/abs/1301.5271}{arXiv:1301.5271}.

\bibitem{JMTWW17}
Gwena\"el Joret, Piotr Micek, William~T. Trotter, Ruidong Wang, and Veit
  Wiechert.
\newblock On the dimension of posets with cover graphs of treewidth $2$.
\newblock {\em Order}, 34(2):185--234, 2017.
\newblock \href{http://arxiv.org/abs/1406.3397}{arXiv:1406.3397}.

\bibitem{JMW_PlanarPosets}
Gwena\"el Joret, Piotr Micek, and Veit Wiechert.
\newblock Planar posets have dimension at most linear in their height.
\newblock {\em SIAM J. Discrete Math.}, 31(4):2754--2790, 2018.
\newblock \href{http://arxiv.org/abs/1612.07540}{arXiv:1612.07540}.

\bibitem{JMW18}
Gwena\"el Joret, Piotr Micek, and Veit Wiechert.
\newblock Sparsity and dimension.
\newblock {\em Combinatorica}, 38(5):1129--1148, 2018.
\newblock \href{http://arxiv.org/abs/1507.01120}{arXiv:1507.01120}.

\bibitem{Kel81}
David Kelly.
\newblock On the dimension of partially ordered sets.
\newblock {\em Discrete Mathematics}, 35:135--156, 1981.

\bibitem{KMT21+}
Jakub Kozik, Piotr Micek, and William~T. Trotter.
\newblock Dimension is polynomial in height for posets with planar cover
  graphs.
\newblock \href{http://arxiv.org/abs/1907.00380}{arXiv:1907.00380}.

\bibitem{MW15}
Piotr Micek and Veit Wiechert.
\newblock Topological minors of cover graphs and dimension.
\newblock {\em Journal of Graph Theory}, 86(3):295--314, 2017.
\newblock \href{http://arxiv.org/abs/1504.07388}{arXiv:1504.07388}.

\bibitem{NOdM-book}
Jaroslav Ne{\v{s}}et{\v{r}}il and Patrice Ossona~de Mendez.
\newblock {\em Sparsity}, volume~28 of {\em Algorithms and Combinatorics}.
\newblock Springer, Heidelberg, 2012.
\newblock Graphs, structures, and algorithms.

\bibitem{RS86}
Neil Robertson and Paul~D. Seymour.
\newblock Graph minors. {V}. {E}xcluding a planar graph.
\newblock {\em Journal of Combinatorial Theory, Series B}, 41(1):92--114, 1986.

\bibitem{S20}
Michał~T. Seweryn.
\newblock Improved bound for the dimension of posets of treewidth two.
\newblock {\em Discrete Mathematics}, 343(1):111605, 2020.
\newblock \href{http://arxiv.org/abs/1902.01189}{arXiv:1902.01189}.

\bibitem{ST14}
Noah Streib and William~T. Trotter.
\newblock Dimension and height for posets with planar cover graphs.
\newblock {\em European J. Combin.}, 35:474--489, 2014.

\bibitem{trotter-walczak-wang}
William~T. Trotter, Bartosz Walczak, and Ruidong Wang.
\newblock Dimension and cut vertices: an application of {R}amsey theory.
\newblock In {\em Connections in {D}iscrete {M}athematics}, pages 187--199.
  Cambridge Univ. Press, Cambridge, 2018.
\newblock \href{https://arxiv.org/abs/1505.08162}{arXiv:1505.08162}.

\bibitem{Walczak17}
Bartosz Walczak.
\newblock Minors and dimension.
\newblock {\em J. Combin. Theory Ser. B}, 122:668--689, 2017.
\newblock \href{http://arxiv.org/abs/1407.4066}{arXiv:1407.4066}.

\end{thebibliography}

\end{document}